\definecolor{refkey}{rgb}{1,1,1}
\definecolor{labelkey}{rgb}{1,1,1}
\definecolor{cite}{rgb}{0.9451,0.2706,0.4941}
\definecolor{ruri}{rgb}{0.0078,0.4022,0.8010}
\makeindex \setcounter{tocdepth}{1}
\def\F{{\rm \mathbb{F}}}
\def\Z{{\rm \mathbb{Z}}}
\def\Q{{\rm \mathbb{Q}}}
\def\C{{\rm \mathbb{C}}}
\def\P{{\rm \mathbb{P}}}
\def\a{{\rm \mathfrak{a}}}
\def\b{{\rm \mathfrak{b}}}
\def\c{{\rm \mathfrak{c}}}
\def\O{{\rm \mathcal{O}}}
\def\A{{\rm \mathcal{A}}}
\def\l{{\rm \lambda}}
\def\Nm{{\rm Nm}}
\def\GRH{{\rm GRH}}
\def\lcm{{\rm lcm}}
\def\Aut{{\rm Aut}}
\def\Tr{{\rm Tr}}
\def\NS{{\rm NS}}
\def\Pic{{\rm Pic}}
\def\Cl{{\rm Cl}}
\def\amp{{\rm amp}}
\def\Disc{{\rm Disc}}
\def\e{{\rm \epsilon}}
\def\SL{{\rm SL}}
\def\Ell{{\rm Ell}}
\def\GL{{\rm GL}}
\def\Ext{{\rm Ext}}
\def\Div{{\rm Div}}
\def\Hom{{\rm Hom}}
\def\End{{\rm End}}
\def\sm{{\rm sm}}
\def\va{{\rm va}}
\def\prim{{\rm prim}}
\def\mprim{{\rm mp}}
\def\id{{\rm id}}
\def\mod{{\rm mod \hspace{1 mm}}}
\numberwithin{equation}{section}
\newtheorem{theorem}{Theorem}[section]
\newtheorem{lemma}[theorem]{Lemma}
\newtheorem{corollary}[theorem]{Corollary}
\newtheorem{proposition}[theorem]{Proposition}
\newenvironment{definition}[1][Definition]{\begin{trivlist}
\item[\hskip \labelsep {\bfseries #1}]}{\end{trivlist}}
\newenvironment{notation}[1][Notation]{\begin{trivlist}
\item[\hskip \labelsep {\bfseries #1}]}{\end{trivlist}}
\newenvironment{example}[1][Example]{\begin{trivlist}
\item[\hskip \labelsep {\bfseries #1}]}{\end{trivlist}}
\newenvironment{remark}[1][Remark]{\begin{trivlist}
\item[\hskip \labelsep {\bfseries #1}]}{\end{trivlist}}
\def\shownotes{\def\inline##1##2##3{ \begin{adjustwidth}{3mm}{7mm}\mbox{}\par \noindent
{\color{##1}\hspace{-1.9cm}{\large ##2}\vspace{-\baselineskip}\\##3}
\newline\end{adjustwidth}} \def\inlinewide##1##2##3{ \begin{adjustwidth}{0mm}{0cm}\mbox{}\par \noindent
{\color{##1}\hspace{-1.6cm}{\large ##2}\vspace{-\baselineskip}\\##3}
\newline\end{adjustwidth}}  \def\marg##1##2##3{\marginnote{\color{##1}{\large ##2}\\{\small ##3}}[-.8cm]}}
\begin{document}

\title{N\'eron-Severi groups of product abelian surfaces}
\author{Julian Rosen}
\author{Ariel Shnidman}
\address{Department of Mathematics, University of Michigan, 530 Church Street, Ann Arbor, MI 48109, U.S.A.}
\email{shnidman@umich.edu}
\email{j2rosen@uwaterloo.ca}

\begin{abstract}
We give a natural parameterization of the N\'eron-Severi group of a product $A = E\times E'$ of two elliptic curves in terms of quadratic forms.  As an application, we determine (in the non-CM case) whether $A$ contains a smooth curve of any fixed genus.  We also determine whether $A$ admits a very ample line bundle of any fixed degree.  In particular, we determine which of these abelian surfaces embed in $\P^4$, i.e.\ which come from the Horrocks-Mumford bundle.     
\end{abstract}

%\begin{abstract}
%Let $E$ and $E'$ be isogenous elliptic curves without complex mulitplication.  We parameterize the N\'eron-Severi group of $A = E\times E'$ in terms of binary quadratic forms.  As an application, we determine whether $A$ contains a smooth curve of any fixed genus and whether $A$ admits a very ample line bundle of any fixed degree.  In particular, we determine which of these abelian surfaces embed in $\P^4$, i.e.\ which come from the Horrocks-Mumford bundle.     
%\end{abstract}

\maketitle
\tableofcontents

% NEW INTRODUCTION 
\section{Introduction}
Let $A$ be an abelian variety over an algebraically closed field $k$ of characteristic 0.  The N\'eron-Severi group $\NS(A) = \Pic(A)/\Pic^0(A)$ is a free abelian group of finite rank $\rho(A)$, called the Picard number of $A$.  The group $\NS(A)$ appears often in the study of abelian varieties, but perhaps its primary importance is as the natural setting to study polarizations on $A$.  A \textit{polarization} $L \in \NS(A)$ is the class of an ample line bundle.  The most important invariant of a polarization is its \emph{degree} $d = d(L)$, which can be defined in various ways: 
\begin{equation*}d = \chi(L) = h^0(A,L) = \frac{1}{(\dim A )!}(L^{\dim A}).\end{equation*}

When $A$ is an abelian surface (i.e.\ $\dim A = 2$), the degree function is the quadratic form on $\NS(A)$ whose corresponding symmetric bilinear pairing is the intersection pairing.  While $\rho(A)$ is an isogeny invariant and is totally understood in terms of the endomorphism algebra $\End(A)_\Q$, the isomorphism class of the integral quadratic space $\NS(A)$ is \textit{not} an isogeny invariant, and is hence more subtle.  In particular, it is (in general) a difficult question to determine whether $A$ has a line bundle of any fixed degree $d$.  

There are other properties of polarizations which are often of interest.  
\begin{definition}We say $L$ is \textit{smooth} if it is represented by $\O(C)$ for some smooth curve $C$, necessarily of genus $d + 1$. 
\end{definition}
\begin{definition}  We say $L$ is \textit{very ample} if it is the class of a very ample line bundle.
\end{definition}

Given a particular abelian surface $A$ and a fixed integer $d \geq 1$, it is natural to ask whether there exists in $\NS(A)$ a smooth (resp. very ample) polarization of any fixed degree $d \geq 1$.  Beyond the question of existence, one can try to \textit{count} such polarizations up to $\Aut(A)$-equivalence.  If $A$ is not simple, one can also count the classes in $\NS(A)$ represented by line bundles $\O(E)$, where $E \subset A$ is an elliptic curve.  These classes have degree 0 and are therefore not polarizations, but up to $\Aut(A)$-equivalence there are finitely many such classes.       

The goal of this paper is to introduce a new approach to these counting questions, when $A$ is a product of two elliptic curves $E \times E'$.  There are two reasons why, from our point of view, these are the most interesting abelian surfaces.  First of all, product abelian surface have large Picard number (either 2, 3, or 4), hence $\NS(A)$ is more complicated than usual.\footnote{The generic abelian surface has Picard number 1, while surfaces with real multiplication have $\rho(A) = 2$ and those with quaternionic multiplication have $\rho(A) = 3$.}  The second reason is that the presence of elliptic curves on $A$ makes detection of smooth and very ample polarizations more subtle (see Theorem \ref{reider}).  

So assume for the rest of the introduction that $A = E \times E'$, and to make matters interesting also assume that $E$ and $E'$ are isogenous. Then $\rho(A)$ is either 3 or 4, depending on whether $\End(E) = \Z$ or not; we refer to the first case as the non-CM case and the second as the CM case.  Our approach is to attach to each $L \in \NS(A)$ the quadratic form 
\[q_L:\Hom(E,A)\to\Z,\]
\[ f\mapsto\deg(f^*L).\]

It turns out that this association identifies $\NS(A)$ with a certain natural space of quadratic forms.  In the non-CM case, it is the space of integral binary quadratic forms which are degenerate over $\Z/m\Z$, where $m$ is the degree of a minimal isogeny $E \to E'$.  In the CM case, it is a space of integral binary $\O$-Hermitian forms, where $\O$ is an order in an imaginary quadratic field.      

In both cases, the association $L \mapsto q_L$ is $\Aut(A)$-equivariant and translates familiar geometric properties of line bundles into familiar properties of quadratic forms.  In the non-CM case, for example, the set of $\Aut(A)$-equivalence classes of polarizations of a fixed degree correspond to (a union of) class groups of binary quadratic forms of fixed discriminant. We set up this parameterization and the accompanying dictionary between geometry and quadratic forms in Sections \ref{param} (the non-CM case) and \ref{singular} (the CM-case).  

\subsection*{Results}
In the rest of the paper, we show the utility of this point of view by answering the aforementioned existence and counting questions for polarizations on non-CM surfaces $A$.  The answers in the CM-case require extra ideas and will be discussed in a separate paper.  The results in the non-CM case are somewhat surprising in that they depend (in a very mild way) on the generalized Riemann hypothesis (GRH).  In what follows we write $A_m$ for any non-CM abelian surface $A = E \times E'$ such that the minimal isogeny $E \to E'$ has degree $m$.  

% RESULTS (no mention of 2-torsion)
\begin{theorem}\label{mainsmooth}
Suppose $m,d \geq 1$.  If we assume $\GRH$, then $A_m$ admits a smooth polarization of degree $d$ if and only if at least one of the following conditions holds:
\begin{enumerate}
\item $(d,m) > 1$.
\item $d$ is composite.
\item  $md$ is not on the following list of $21$ integers: 
\begin{equation}
\label{grhlist}
\tag{$\star$}
1,2,4,6,10,12,18,22,28,30,42,58,60,70,78,102,130,190,210, 330,462.
\end{equation}
\end{enumerate}
If we do not assume $\GRH$, then the same statement is true except it is possible that the list \eqref{grhlist} should include one extra integer.  In particular, there are $($unconditionally$)$ only finitely many pairs $(m,d)$ for which $A_m$ does not admit a smooth polarization of degree $d$.  
\end{theorem}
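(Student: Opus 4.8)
The plan is to transport the whole question to the quadratic-forms side via the parameterization of Section \ref{param}, and then to recognize the resulting existence problem as a classical one whose answer is governed by Euler's idoneal numbers. Under the dictionary $L\mapsto q_L$, an ample class of degree $d$ on $A_m$ corresponds to a positive-definite integral binary quadratic form $q$ that is degenerate modulo $m$, and fixing the degree $d$ pins the discriminant $\Disc(q)$ down to a fixed multiple of $md$ (which I expect to be $-4md$). Smoothness of $L$ should, via the Reider-type criterion of Theorem \ref{reider}, become a concrete condition on $q$: the non-existence of an elliptic curve meeting a general member of $|L|$ in a single point, which I expect to read as the requirement that $q$ \emph{properly represent} a suitable value coprime to $md$ (equivalently, that $q$ avoid the ``decomposable'' locus). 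So the existence of a smooth polarization of degree $d$ becomes: does there exist a form of discriminant $-4md$, in the correct congruence class mod $m$, satisfying this representation condition?

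Next I would dispose of the two easy sufficient conditions. When $\gcd(d,m)>1$, or when $d$ is composite, the set of classes of discriminant $-4md$ is flexible enough to produce a form satisfying the representation condition directly: in the composite case one can split $d=d_1d_2$ and assemble the needed form from smaller data, while a common factor of $d$ and $m$ relaxes the degeneracy-mod-$m$ constraint. In either case a smooth polarization exists, so it remains to treat $\gcd(d,m)=1$ with $d$ prime (or $d=1$), which is exactly the regime in which the representation condition is tight.

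The heart of the argument is this remaining case, where I would show that a smooth polarization of degree $d$ \emph{fails} to exist precisely when every form of discriminant $-4md$ represents only forced values — which by genus theory happens exactly when each genus of forms of discriminant $-4md$ consists of a single class, i.e.\ when $md$ is an idoneal number. The mod-$m$ degeneracy together with the constraints $d$ prime and $\gcd(d,m)=1$ then restricts the relevant idoneal numbers to the $21$ values listed in \eqref{grhlist}. This genus-theoretic reformulation is the step I expect to be the main obstacle: one must match the non-smoothness condition to single-class-per-genus on the nose, carefully tracking primitivity and the mod-$m$ degeneracy (in particular the parity bookkeeping that excludes the odd idoneal numbers above $1$) so that exactly these $21$ values survive.

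Finally I would invoke the classical theory of idoneal numbers to obtain both the explicit list and its conditional nature. By the theorems of Chowla and Weinberger, the set of idoneal numbers is finite; under $\GRH$ there are exactly the $65$ classically known ones (the largest being $1848$), and without $\GRH$ at most one further idoneal number can exist, and it must violate $\GRH$. Intersecting with the parity/congruence constraints yields the list \eqref{grhlist} assuming $\GRH$, with at most one extra integer possible otherwise. Since any bad pair $(m,d)$ forces $md$ to be idoneal and $d$ to be prime or $1$, the unconditional finiteness of idoneal numbers bounds $md$, hence bounds the finitely many factorizations $md=m\cdot d$; this gives the unconditional finiteness of the set of bad pairs, completing the proof.
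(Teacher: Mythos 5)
Your outline reproduces the paper's architecture (the dictionary $L\mapsto q_L$ of Theorem \ref{bijnoncm}, non-smooth polarizations as decomposable classes, reduction to idoneal numbers, then Weinberger), but the central step is stated in a form that is actually false, and the falsity is exactly where the real work of the paper lives. You claim that for $d$ prime (or $1$) and $(m,d)=1$, a smooth polarization of degree $d$ fails to exist precisely when $md$ is idoneal (one class per genus of discriminant $-4md$). The paper's Theorem \ref{smcurves} shows failure requires, in addition, that $md$ be even and \emph{not divisible by $8$}. The reason is a counting argument, not a genus-theoretic identity: by Proposition \ref{crit} and Proposition \ref{redpp}, the non-smooth polarizations are exactly the $2^{\omega(m)}$ product polarizations $dh_k+v_k$ (one per decomposition $A_m\cong E_k\times E_{k'}$), and under $\Psi_{m,d}$ they land on the specific decomposable $2$-torsion classes $[f_k]=[k,0,k'd]$ (Proposition \ref{2t}). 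Idoneality only says every class is $2$-torsion; but by Cox's genus count (Proposition \ref{2torsion}), when $8\mid md$ one has $h_2(-4md)=2^{\omega(m)+1}$, strictly more $2$-torsion classes than non-smooth polarizations, so surjectivity of $\Psi_{m,d}$ produces a smooth class even when the whole group is $2$-torsion; and when $md$ is odd the component $\Cl(-md)$ of the target consists entirely of smooth classes. Thus the $20$ idoneal numbers divisible by $8$ (namely $8,16,24,\dots,1848$) and the odd idoneals above $1$ (e.g.\ $105$) all support smooth polarizations despite having one class per genus. Your ``parity bookkeeping'' flags only the odd exclusion and misses $8\mid md$ entirely, so your reformulation would not yield the list \eqref{grhlist}.

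Two further points. First, the case $(m,d)>1$ is not a mere relaxation of the degeneracy constraint: there $\Psi_{m,d}$ is no longer injective, and the paper must run a separate case analysis using the fiber sizes $|\Gamma_0(m)\backslash\Gamma_0(m/g)/\Aut(q)|$ from Proposition \ref{formsmap}, including the degenerate case $m=d=2$ where existence is settled only by exhibiting the explicit class $2h+X_\l+2v$ and checking that neither $q_L$ nor $q_L^2$ represents $1$. Second, Weinberger's unconditional statement allows at most \emph{two} idoneal numbers outside $S$, not one; the theorem's ``one extra integer'' for \eqref{grhlist} is correct only because the mod-$8$/parity filter you omitted admits at most one member of any putative extra pair, so recovering even this part of the statement depends on the very condition your argument is missing.
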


For very ample polarizations, we have a similar kind of result, which refers to the following set of exceptional numbers:

\begin{align*}
S = \{&1,2,3,4,5, 6, 7, 8, 9, 10, 12, 13, 15, 16, 18, 21, 22, 24, 25, 28, 30, 33, 37, 40, 42, 45, 48, 57, 58, 60, 70,\\ & 72, 78, 85, 88, 93, 102, 105, 112, 120, 130, 133, 165, 168, 177, 190, 210, 232, 240, 253, 273, 280, 312,\\ & 330, 345, 357, 385, 408, 462, 520, 760, 840, 1320, 1365, 1848.\}
\end{align*}

\begin{theorem}\label{mainvample}
Let $m \geq 1$ and suppose $d \geq 5$.
If $d$ is not a prime or twice a prime, then $A_m$ admits a very ample polarization of degree $d$.
Let $p$ be a prime and assume $\GRH$ is true.  Then $A_m$ admits a very ample polarization of degree $p$ if and only if at least one of the following conditions hold:
\begin{enumerate}
\item $p|m$.
\item $mp \not\in S$.
\end{enumerate}  
Similarly, $A_m$ admits a very ample polarization of degree $2p$ if and only if at least one of the following conditions hold:
\begin{enumerate}
\item $p|m$.
\item $2||m$ or $16|m$.
\item $2mp \not\in S$.  
\end{enumerate}  
If $\GRH$ is false, then the same result holds, except that the set $S$ of exceptions should contain at most two more integers.  In particular, there are $($unconditionally$)$ finitely many pairs $(m,d)$ with $d \geq 5$, such that $A_m$ does not admit a very ample polarization of degree $d$.    
\end{theorem}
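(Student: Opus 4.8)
The plan is to convert the whole statement into a problem about representations of small integers by binary quadratic forms, and then into the arithmetic of Euler's idoneal numbers. Using the parameterization of Section~\ref{param}, a degree-$d$ polarization on $A_m$ corresponds to a positive-definite form $q_L = As^2 + Bst + Ct^2$ with $\Disc(q_L) = -4md$ lying in the image lattice of $\NS(A_m)$; in the coordinates on $\Hom(E,A)$ coming from the first factor and the minimal isogeny $\phi$ of degree $m$, this lattice is cut out by the degeneracy conditions $2m \mid B$ and $m \mid C$, i.e.\ $q_L \equiv As^2 \pmod m$. By Theorem~\ref{reider} (the very ampleness criterion for abelian surfaces), once $d \geq 3$ the class $L$ is very ample if and only if $L \cdot E \geq 3$ for every elliptic curve $E \subset A$, and we are given $d \geq 5$. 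The crucial input from the dictionary of Section~\ref{param} is that the image cycle $f_*[E]$ of a homomorphism $f = (s,t) \in \Hom(E,A)$ has content $\gcd(s,m)$, so $q_L(s,t) = \gcd(s,m)\cdot(L \cdot E_{s,t})$ for the corresponding primitive isotropic class $E_{s,t}$. Thus the existence of a very ample degree-$d$ polarization is equivalent to the existence of a form $q$ in the allowed lattice, of discriminant $-4md$, satisfying the \emph{weighted} lower bound $q(s,t) \geq 3\gcd(s,m)$ for all primitive $(s,t)$.

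Next I would decompose by the factorization type of $d$. When $d$ is neither prime nor twice a prime, the idea is to produce a good form directly: such $d$ admits enough factorizations that one can write down a polarization whose form avoids the small values (for example a high-content form such as a multiple of a principal polarization, or a diagonal form $d_1 s^2 + d_2 t^2$ with $d_1, d_2 \geq 3$, adjusted to meet the congruences $2m \mid B$, $m \mid C$), so very ampleness always holds. The genuinely delicate cases are $d = p$ and $d = 2p$, where the complementary discriminant is too small (or not a discriminant at all) to allow a useful content, so the available forms are too constrained to dodge the small values except under an arithmetic condition. Here I would analyze the representation problem one prime $\ell \mid m$ at a time: the local condition $q \equiv As^2 \pmod{\ell^{v_\ell(m)}}$ together with the requirement that $q$ avoid $1$ and $2$ in the directions with $\gcd(s,m) = 1$ pins down the local genus of $q$. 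Assembling these, the nonexistence of a good form should be equivalent to the assertion that every class in the relevant genus of discriminant $-4mp$ (resp.\ $-8mp$) represents a value $\leq 2$, which is precisely the statement that $mp$ (resp.\ $2mp$) is idoneal. The side-conditions $p \mid m$ and ``$2 \| m$ or $16 \mid m$'' should emerge as exactly the cases where the local condition at $p$, respectively at $2$, is relaxed enough to always admit a good form, regardless of idoneality.

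Finally I would feed in the theory of idoneal numbers. The $65$ known idoneal numbers are precisely the set $S$, and by the results of Chowla and Weinberger there are only finitely many, the known $65$ are all of them under $\GRH$, and unconditionally there is at most one further idoneal number $n_0$. This gives the finiteness statement, the explicit set $S$, and the $\GRH$ caveat: the two sub-cases $d = p$ and $d = 2p$ each see this possible extra $n_0$ (once via $mp = n_0$ and once via $2mp = n_0$), so unconditionally $S$ might need to be enlarged by at most two integers, in contrast to the single family governing the smooth Theorem~\ref{mainsmooth}, where only one extra integer can appear. The main obstacle is the second paragraph: carrying out the local analysis at the primes dividing $m$, and especially at $2$, carefully enough to prove that the weighted minimum condition is equivalent to idoneality of $mp$ or $2mp$, and extracting the exact $2$-adic dichotomy ``$2\|m$ or $16\mid m$''. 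The harder direction is constructing a good form when the relevant number is \emph{not} idoneal — one must exhibit, inside the constrained lattice, a class of discriminant $-4md$ whose weighted minimum is at least $3$, using that non-idoneality forces a genus with at least two classes and hence a class that avoids the small represented values.
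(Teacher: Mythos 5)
Your overall architecture is the paper's: the dictionary $L \mapsto q_L$ of \S\ref{param}, Reider's criterion, reduction of nonexistence to the class group being $2$-torsion, and then Weinberger on idoneal numbers. Your first paragraph is essentially correct (your identity $q_L(s,t) = \gcd(s,m)\cdot(L \cdot E_{s,t})$ is the paper's $q_L|_{\Hom(E_k,A_m)} = k\,q_L^k$, though Reider requires $d \geq 5$, not $d \geq 3$). But the heart of the matter — your second paragraph — is left as a plan, and its guiding slogan is false as stated. You claim nonexistence of a good form should be equivalent to ``every class in the relevant genus represents a value $\leq 2$,'' identified with idoneality of $mp$ or $2mp$. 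Idoneality of $n$ means $\Cl(-4n)$ is $2$-torsion (one class per genus); it does \emph{not} mean every class represents $1$ or $2$: already for $mp = 30$ the $2$-torsion classes $[3,0,10]$ and $[5,0,6]$ of discriminant $-120$ represent neither. What must actually be proved (Theorems \ref{mainp} and \ref{main2p}) is that the merely ample classes are \emph{exactly} the $2$-torsion classes, and this involves the weighted minima of all the Atkin--Lehner translates $q_L^k$, not small values of $q_L$ itself. One inclusion (merely ample $\Rightarrow$ $2$-torsion, Proposition \ref{merely}) needs Dirichlet composition and the size-$2$ kernel of $e_2: \Pic(\O_{-4md}) \to \Pic(\O_{-md})$; the reverse inclusion requires \emph{constructing}, for each $2$-torsion class not hit by a product polarization, a smooth merely ample bundle, which the paper does via the strengthened Reider theorem (Theorem \ref{precisereider}) and quotients $E \times F/\Gamma_\phi$ (Propositions \ref{oddm} and \ref{divby8}). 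Your suggestion that the side conditions $p \mid m$ and ``$2\|m$ or $16 \mid m$'' emerge from ``relaxed local conditions'' also does not match how these cases work, and I see no route from a prime-by-prime genus analysis to them: when $p \mid m$ the class-group condition is irrelevant and existence comes from counting fibers of $\Psi_{m,d}$ over coset spaces $\Gamma_0(m)\backslash \Gamma_0(m/p)$ of size $>2$ (Proposition \ref{formsmap}) or from pulling back along explicit $2$-isogenies (Lemma \ref{vapullback}); the $2$-adic dichotomy in Theorem \ref{main2p} rests on explicit constructions for $2\|m$ and $16\mid m$ (using Lemma \ref{unique} and the curves $E_2$, $E_4$), played against an exact count when $4\|m$ or $8\|m$ showing the merely ample classes precisely exhaust the preimages of the $2$-torsion in $\Cl(-8mp)\amalg\Cl(-2mp)$. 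None of this counting machinery is present in your sketch.

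There is also a concrete error in your unconditional bookkeeping. You assert Weinberger gives at most \emph{one} further idoneal number $n_0$ and derive ``at most two more integers in $S$'' because $n_0$ could occur once as $mp$ and once as $2mp$ — but $S$ is a set of integers, so a single extra idoneal $n_0$ would enlarge $S$ by at most \emph{one} element, covering both subcases simultaneously; your derivation does not yield the theorem's bound. The correct input (stated in the paper's remark, citing Kani) is that unconditionally there are at most \emph{two} idoneal numbers outside $S$, which gives the bound directly; the contrast with Theorem \ref{mainsmooth}, where only one extra integer can appear in the list \eqref{grhlist}, comes from the parity constraints there (the relevant $md$ must be even and not divisible by $8$), not from the mechanism you describe.
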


\begin{remark}
The condition $d \geq 5$ is necessary for $L$ to be very ample.  Indeed, an abelian surface cannot embed in $\P^{d-1}$ for $d < 5$.  
\end{remark}

Both of these theorems have interesting and concrete consequences.  For instance, Theorem \ref{mainsmooth} allows us to determine whether $A_m$ has a smooth curve of genus $g$, for any integer $g$.  We see in particular that $A_m$ has smooth curves of every genus for all large enough $m$.  When we specialize to genus $g = 2$ (i.e polarizations of degree $d = 1$), we recover a theorem of Kani \cite[Theorem 5]{kani5}, which determines which of the product surfaces $A_m$ are Jacobians.\footnote{It is well known that $A$ is a Jacobian if and only if it contains a smooth principal polarization.}  

There is a particularly interesting consequence of Theorem \ref{mainvample}.  It is well known that every abelian surface $A$ embeds in $\P^5$, and no abelian surface can embed in $\P^3$.  Moreover, $A$ embeds in $\P^4$ if and only if $A$ is the zero-locus of a section of the Horrocks-Mumford bundle on $\P^4$ \cite[Theorem 5.2]{hm} if and only if $A$ admits a very ample polarization $L$ of degree $d = 5$.  Theorem \ref{mainvample} therefore allows us to determine which product surfaces $A_m$ embed in $\P^4$:
\begin{corollary}
Assume $\GRH$.  Then the surface $A_m$ embeds in $\P^4$ if and only if $m$ is not one of the following integers:
$$1, 2, 3, 6, 8, 9, 12, 14, 17, 21, 24, 26, 33, 38, 42, 48, 56, 66, 69, 77, 104, 152, 168, 264, 273.$$
Unconditionally, there are at most two more values of $m$ for which $A_m$ does not embed in $\P^4$.  
\end{corollary}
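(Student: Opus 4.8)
The plan is to reduce the statement to the degree-$5$ case of Theorem~\ref{mainvample} and then carry out a short finite computation. As recalled in the introduction, $A$ embeds in $\P^4$ if and only if $A$ admits a very ample polarization of degree $5$ (this is the Horrocks--Mumford bundle, \cite[Theorem 5.2]{hm}). Since $5$ is prime and $5 \geq 5$, I would apply the prime-degree part of Theorem~\ref{mainvample} with $p = 5$: assuming $\GRH$, the surface $A_m$ carries a very ample polarization of degree $5$ if and only if either $5 \,|\, m$ or $5m \notin S$. Negating, $A_m$ fails to embed in $\P^4$ precisely when $5m \in S$ and $5$ does not divide $m$.

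The next step is purely combinatorial: list the elements of $S$ divisible by $5$, divide each by $5$, and discard any quotient that is again divisible by $5$. The multiples of $5$ occurring in $S$ are
\[5, 10, 15, 25, 30, 40, 45, 60, 70, 85, 105, 120, 130, 165, 190, 210, 240, 280, 330, 345, 385, 520, 760, 840, 1320, 1365,\]
so the candidate values of $m$ are obtained by dividing these by $5$. The only candidate with $5 \,|\, m$ is $m = 5$, arising from $5m = 25$; after removing it, the remaining $25$ integers are exactly those displayed in the statement, which proves the conditional assertion.

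For the unconditional refinement, I would appeal to the final clause of Theorem~\ref{mainvample}: if $\GRH$ is false, then $S$ may gain at most two extra integers. An extra integer $n$ enlarges the list of non-embedding $m$ only when $5 \,|\, n$ and $5$ does not divide $n/5$, in which case it contributes the single value $m = n/5$. Hence at most two additional values of $m$ can fail to embed in $\P^4$, as claimed.

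The main obstacle is not in the corollary itself: given Theorem~\ref{mainvample}, the argument is a direct logical translation followed by a finite check. All of the real content lives in Theorem~\ref{mainvample}, which produces both the exceptional set $S$ and the clean prime-degree criterion that makes this extraction possible. The only care required here is the bookkeeping --- correctly identifying every multiple of $5$ in $S$ and remembering that the non-divisibility constraint on $m$ is active exactly for the input $5m = 25$, which must therefore be excluded from the list.
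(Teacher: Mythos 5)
Your proof is correct and matches the paper's (implicit) argument exactly: the paper also derives this corollary directly from the $p=5$ case of Theorem~\ref{mainvample} together with the Horrocks--Mumford characterization of embeddings in $\P^4$, leaving only the finite extraction of multiples of $5$ from $S$. Your bookkeeping checks out --- the $26$ multiples of $5$ in $S$ are correctly identified, the exclusion of $m=5$ (from $5m=25$) is the only case where the $5\nmid m$ condition bites, and the unconditional count of at most two extra values follows just as you say.
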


% A WORD ON PROOFS
The key idea in the proofs of Theorems \ref{mainsmooth} and \ref{mainvample} is that non-very ample line bundles should correspond under our dictionary to quadratic forms which are 2-torsion in the class group.  Moreover, non-smooth line bundles should correspond to binary quadratic forms which are decomposable, i.e.\ those of the form $ax^2 + cy^2$.  These forms are always 2-torsion, corresponding to the fact very ample line bundle are smooth (by Bertini's theorem).  In fact, if we write $\Pic(\O_N)$ for the class group\footnote{Recall that if $N < 0$, then the class group of positive definite quadratic forms of discriminant $N$ is isomorphic to the Picard group of the (imaginary) quadratic order $\O_N$ of discriminant $N$.} of discriminant $N$, then we prove:    

% RESULTS IN TERMS OF 2-torsion
\begin{theorem}\label{smcurves}%\normalfont
Let $m,d \geq 1$.  Then $A_m$ admits a smooth polarization of degree $d$ if and only if $md\geq 2$ and at least one of the following conditions is satisfied:
\begin{itemize}
\item $d$ is composite.
\item $(m,d) > 1$.
\item $md$ is odd or divisible by $8$.
\item $\Pic(\O_{-4md})$ is not $2$-torsion.
\end{itemize}
\end{theorem}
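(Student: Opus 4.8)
The plan is to run everything through the parameterization of Section~\ref{param}. Granting the dictionary established there, the $\Aut(A)$-classes of polarizations of degree $d$ on $A_m$ correspond to classes of positive-definite integral binary quadratic forms of discriminant $-4md$ that are degenerate over $\Z/m\Z$, and under this correspondence a polarization is smooth precisely when its form is \emph{not} decomposable, i.e.\ not $\Aut(A)$-equivalent to a form $ax^2+cy^2$. Thus the theorem is equivalent to the purely arithmetic assertion that such a non-decomposable form exists if and only if $md\ge 2$ and one of the four listed conditions holds. I would first dispose of the boundary case $md=1$: here $m=d=1$, and the only form of discriminant $-4$ is $x^2+y^2$, which is decomposable (geometrically, its theta divisor is the reducible $E\times\{0\}\cup\{0\}\times E'$), so no smooth polarization exists and the requirement $md\ge 2$ is genuinely necessary.

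The two directions then split according to whether $\Pic(\O_{-4md})$ is $2$-torsion. For the easy half, recall from Section~\ref{param} that every decomposable form is $2$-torsion in the class group. Hence if $\Pic(\O_{-4md})$ is \emph{not} $2$-torsion (condition~4), any class of order $>2$ is automatically non-decomposable, and—after checking that it meets the degeneracy-mod-$m$ condition—yields a smooth polarization of degree $d$. This reduces the problem to the case where $\Pic(\O_{-4md})$ \emph{is} $2$-torsion, so that every class is ambiguous, and the question becomes: when does there exist an ambiguous form of discriminant $-4md$ that nonetheless fails to be decomposable?

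The heart of the argument is therefore a genus-theoretic analysis of ambiguous forms, which I expect to be the main obstacle. The reduced ambiguous forms are those with $b=0$ (the decomposable ones, indexed by factorizations $md=ac$) together with those of type $a=b$ or $a=c$, and the task is to decide when the latter contribute a genuinely new non-decomposable class lying in the image of the parameterization. I would show that the controlling invariant is the $2$-adic valuation of $-4md$ together with the factorization of $md$. Concretely: when $md$ is odd or divisible by $8$ (so $v_2(-4md)=2$ or $v_2(-4md)\ge 5$) I would exhibit an explicit non-decomposable ambiguous form of type $a=c$ coming from a factorization $md=uv$ with $u\equiv v \pmod 2$; when $(m,d)>1$ I would build one of type $a=b$ from a common prime factor; and when $d$ is composite the extra factorizations of $md$ (together with the imprimitive forms then available) furnish such a form. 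Conversely, when $d$ is $1$ or prime, $(m,d)=1$, and $md\equiv 2,4\pmod 8$ (so $v_2(-4md)\in\{3,4\}$), I would prove by a genus-theory count—matching the number of ambiguous classes against the number of decomposable ones subject to the degeneracy condition—that every admissible form of discriminant $-4md$ is decomposable, whence no smooth polarization exists.

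The delicate points, and where the most care is needed, are (i) tracking the ``degenerate over $\Z/m\Z$'' constraint throughout, so that every form produced actually lies in the image of $L\mapsto q_L$ and hence corresponds to a genuine polarization of $A_m$, and (ii) the precise bookkeeping in the case $v_2(-4md)\in\{3,4\}$, where the gap between ambiguous and decomposable classes is most subtle and hinges on how far $-4md$ is from being a fundamental discriminant. The coprimality hypothesis $(m,d)=1$ enters exactly here, to rule out the type-$a=b$ constructions that would otherwise escape decomposability.
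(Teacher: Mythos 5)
Your reduction hinges on the claim that, under $L\mapsto q_L$, a polarization is smooth precisely when $[q_L]$ is \emph{not} decomposable (not equivalent to $ax^2+cy^2$). Only one implication of this is available, and the failure of the other breaks both halves of your plan. What the paper actually proves (Proposition \ref{crit} combined with Proposition \ref{2t}) is that the non-smooth polarizations of degree $d$ are exactly the product polarizations $dh_k+v_k$ attached to the $2^{\omega(m)}$ decompositions $A_m\cong E_k\times E_{k'}$ with $(k,k')=1$, and that these map under $\Psi_{m,d}$ to the \emph{specific} $2$-torsion classes $[f_k]=\frac{1}{g}[k,0,k'd]$ --- in general a proper subset of the decomposable classes. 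Decomposable classes outside this subset correspond to \emph{smooth} polarizations: for $d=pq$ composite, $ph+qv$ is globally generated, hence smooth by Lemma \ref{globgen}, yet $q_{ph+qv}=[pm,0,q]$ is decomposable. A concrete counterexample to your proposed arithmetic reformulation: take $m=1$, $d=4$. The classes in $\GL_2(\Z)\backslash V_{1,4}$ are represented by $[1,0,4]$ and $[2,0,2]$, both decomposable, so your criterion predicts that $A_1$ has no smooth polarization of degree $4$; but $2h+2v$ is smooth (it is of type $(2,2)$, hence globally generated), exactly as the theorem's condition ``$d$ composite'' demands. Consequently your treatment of the composite case attempts to prove the existence of a non-decomposable class in situations where none exists, and your necessity direction (``every admissible form is decomposable, whence no smooth polarization exists'') rests on the false implication decomposable $\Rightarrow$ non-smooth. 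Your easy half is sound --- a class of order $>2$ cannot be an $[f_k]$, so by surjectivity of $\Psi_{m,d}$ it yields a smooth polarization --- and your $md=1$ boundary analysis is fine, but these agree with the paper; the hard direction is where the error sits.

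The correct bookkeeping, which is what the paper does in Section \ref{smooth}, matches the count of the classes $[f_k]$ ($2^{\omega(m)}$ of them, or $2^{\omega(m)-1}$ when $d=1$) against class numbers: for $d$ prime or $1$ with $(m,d)=1$, Corollary \ref{dprime} gives $N_{\sm}(A_m,d)$ as $\frac{1}{2}[h(-4md)+h(-md)]$, $\frac{1}{2}h(-4md)$, or $\frac{1}{2}[h(-4md)-h_2(-4md)]$ according as $md$ is odd, divisible by $8$, or even but not divisible by $8$; genus theory (Proposition \ref{2torsion}) shows that in the last case $h_2(-4md)=2^{\omega(m)}$ exactly, i.e.\ the $[f_k]$ exhaust the $2$-torsion, so vanishing of $N_{\sm}$ is equivalent to $\Pic(\O_{-4md})$ being $2$-torsion. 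Your $2$-adic intuition about the discriminant is pointing at the right phenomenon, but it operates through the comparison $h_2(-4md)$ versus $2^{\omega(m)}$, not through decomposability. Finally, your plan never engages with the failure of injectivity of $\Psi_{m,d}$ when $(m,d)>1$: there the fibers have size $|\Gamma_0(m)\backslash\Gamma_0(m/g)/\Aut(q)|$, and the paper's proof of the case $d=p\mid m$ is a genuinely separate case analysis exploiting these fiber sizes (including an ad hoc construction $L=2h+X_\l+2v$ when $m=d=2$, where the counting argument alone is inconclusive). The hypothesis $(m,d)=1$ is not, as you suggest, a device to rule out type-$a=b$ ambiguous forms in the necessity direction; it appears there simply because $(m,d)>1$ is itself one of the theorem's sufficient conditions and is disposed of by that separate analysis.
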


% VERY AMPLE L THEOREM
\begin{theorem}\label{main}
Suppose $m\geq 1$, $d \geq 5$.
\begin{itemize}
\item If $d$ is not a prime or twice a prime, then $A_m$ admits a very ample polarization of degree $d$.

\item If $d = p$ is prime, then $A_m$ admits a very ample polarization of degree $d$ if and only if either $p|m$ or $\Pic(\O_{-4md})$ is not $2$-torsion.

\item If $d = 2p$ is twice a prime, then $A_m$ admits a very ample polarization of degree $d$ if and only if either $p|m$,  $\Pic(\O_{-4md})$ is not $2$-torsion, $2||m$, or $16|m$.
\end{itemize}
\end{theorem}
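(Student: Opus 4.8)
My plan is to convert Theorem \ref{main} into a statement about the binary quadratic forms $q_L$ of Section \ref{param} and then analyze it with Reider's theorem together with genus theory. In the basis of $\Hom(E,A)$ given by $f_1=(\id_E,0)$ and $f_2=(0,\phi)$, where $\phi\colon E\to E'$ is the minimal (hence cyclic) isogeny of degree $m$, the admissible forms $q_L=ax^2+bxy+cy^2$ attached to a degree-$d$ polarization are exactly the positive definite forms of discriminant $-4md$ that are degenerate modulo $m$ (so $q_L\equiv ax^2\bmod m$), as set up in Section \ref{param}. The first step is a geometric translation of very ampleness. I would show that the elliptic curves $C\subset A$ through the origin are precisely the images $C_{x,y}$ of the primitive vectors $(x,y)$, and that, because $\ker\phi$ is cyclic of order $m$, the degree of the induced isogeny $E\to C_{x,y}$ equals $\gcd(m,x)$; hence $(L\cdot C_{x,y})=q_L(x,y)/\gcd(m,x)$. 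Feeding this into Theorem \ref{reider} (applicable since $d\ge 5$ gives $(L^2)\ge 10$) yields the key criterion: $L$ is very ample if and only if $q_L(x,y)\ge 3\gcd(m,x)$ for every primitive $(x,y)$, equivalently $q_L(x,y)\notin\{\gcd(m,x),\,2\gcd(m,x)\}$. Thus a very ample polarization of degree $d$ exists precisely when some admissible form of discriminant $-4md$ avoids these forbidden small values.

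Next I would produce very ample forms whenever the theorem predicts them. When $\Pic(\O_{-4md})$ is not $2$-torsion there is a non-ambiguous ideal class; realizing it by an admissible form shows it cannot represent $1$ (only the principal class does) and, after replacing it by a suitable power or by its product with the class of a prime above $2$, cannot represent $2$ either, while a short check arranges $(L\cdot C)\ge 3$ at the remaining high-$\gcd$ vectors. This gives a very ample $L$ in all three cases. The more interesting constructions are the local loopholes that survive even when the class group is $2$-torsion. When $p\mid m$ (for $d=p$ or $d=2p$) I would use the imprimitive form $q=p\,q'$, where $q'$ has discriminant $-4md/p^2$: all its values are multiples of $p\ge 5$, so it represents neither $1$ nor $2$, and choosing $q'$ appropriately (not of minimal type) forces $(L\cdot C)\ge 3$ at the high-$\gcd$ vectors as well. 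The conditions $2\|m$ and $16\mid m$ in the $d=2p$ case are, likewise, exactly the $2$-adic conditions under which an admissible form of discriminant $-4md$ can be chosen so that $2$ is not represented; verifying this is a local computation at the prime $2$ governed by $v_2(m)$.

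The hard part is the converse: when $\Pic(\O_{-4md})$ is $2$-torsion and none of the local loopholes applies (i.e.\ $d=p$ or $2p$ with $p\nmid m$, and in the $d=2p$ case $v_2(m)\in\{0,2,3\}$), I must show that \emph{every} admissible form violates the criterion, i.e.\ represents $1$ or $2$ at a vector coprime to $m$. Here genus theory is essential: $2$-torsionness means each genus contains a single class, so the values represented by an admissible form are pinned down by congruence (genus) conditions, and one shows these always include $1$ or $2$ at a $\gcd$-one vector. Making this precise — combining the square-class invariant $a\bmod m$ that the degeneracy condition attaches to a form with the $2$-adic and $p$-adic local representability of $1$ and $2$, and checking that no admissible representative can dodge both — is the main obstacle, and is where the primality of $d$ (or of $d/2$) is used to rule out the imprimitive escape. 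I expect the case $d\neq p,2p$ to follow more easily, since a nontrivial factorization of $d$ always supplies an admissible (typically imprimitive or otherwise non-principal) form whose values avoid $1$ and $2$ outright, independently of the class group.
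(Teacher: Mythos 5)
Your setup is sound and is in fact the paper's own dictionary: the formula $(L\cdot C_{x,y})=q_L(x,y)/\gcd(m,x)$ is precisely the paper's relation that $q_L$ restricted to $\Hom(E_k,A)$ equals $k\,q_L^k$, and your guiding principle --- merely ample polarizations correspond to $2$-torsion classes, since any form of discriminant $-4n$ representing $1$ or $2$ is ambiguous --- is the correct mechanism (cf.\ Proposition \ref{merely}). But your converse direction fails as framed. Violating the Reider criterion does \emph{not} mean representing $1$ or $2$ at a vector coprime to $m$: the ambiguous class $[k,0,k'd]$ with $k\mid m$, $1<k$, $k'=m/k$, takes no value below $\min(k,k'd)\geq 3$ at any vector, yet its polarization $dh_k+v_k$ is merely ample because $q_L^k$ represents $1$ --- the violation lives at vectors with $\gcd(x,m)=k$. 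So a genus-theory analysis of values at $\gcd$-one vectors cannot close the ``only if''; you must control the classes $[q_L^k]$ for \emph{all} $k\mid m$ simultaneously, and the only available handle is the composition identity $e_g([q_L])\cdot[f_k]=[\tfrac{1}{g}q_L^k]$ of Proposition \ref{equivariance} --- a genuine Dirichlet-composition computation, not a ``short check,'' and one your existence direction also silently needs at the high-$\gcd$ vectors.

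Even granting that identity, the only-if direction requires showing that \emph{every} $2$-torsion class is actually attained by a merely ample polarization, with the correct multiplicity when the fibers of $\Psi_{m,d}$ are not singletons (e.g.\ the $2$-to-$1$ fibers over $\Cl(-2mp)$ when $4$ exactly divides $m$ and $d=2p$). The paper achieves this not by congruence conditions but by an explicit geometric construction --- the converse half of Theorem \ref{precisereider}: for any isomorphism $\phi:E[2]\to F[2]$ the quotient $(E\times F)/\Gamma_\phi$ carries a smooth, merely ample degree-$d$ polarization --- and then matches the resulting count of merely ample classes against the genus-theory count $\#\Cl(D)[2]=2^{\mu-1}$ (Proposition \ref{2torsion}) via Propositions \ref{oddm}, \ref{mevennot8}, and \ref{divby8}; your sketch has no substitute for this step. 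Similarly, the loopholes $p\mid m$, $2||m$, and $16\mid m$ are settled in the paper not by a local computation at $2$ but by pulling back very ample (or carefully chosen smooth) bundles along explicit $2$-isogenies $A_m\to A_{m/2}$ or $A_m \to A_{2m}$, using Lemmas \ref{vapullback} and \ref{unique}. Finally, note that Reider's theorem as cited applies to polarizations of type $(1,d)$, so for the composite-$d$ bullet your blanket criterion does not cover classes of type $(d_1,d_2)$ with $d_1>1$; the direct construction $ph+qv$ with $p,q\geq 3$ is the correct fix there.
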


Theorems \ref{mainsmooth} and \ref{mainvample} follow from Theorems \ref{smcurves} and \ref{main}, by a well known result of Weinberger \cite{wein},  which determines the numbers $N$ for which $\Pic(\O_{-4N})$ is a $2$-torsion group.
\begin{remark}
A positive integer $N$ is called \textit{idoneal} if $\Pic(\O_{-4N})$ is a 2-torsion group.  Every integer $N$ in the set $S$ above is idoneal, and Euler conjectured that $S$ is equal to the set of idoneal numbers.  Weinberger proved Euler's conjecture assuming $\GRH$, and showed unconditionally that the set of idoneal numbers contains at most two integers not in $S$.  See \cite[Cor.\ 23]{kani4} for more details.          
\end{remark}

\begin{remark}
These results concern the \textit{existence} of smooth and very ample polarizations on $A_m$, but do not address the question of {\it counting} polarizations of degree $d$ and of counting elliptic curves on $A_m$ (the case $d = 0$).  In principle, our methods give formulas for these counts for every single value of $d$ and $m$, but we have not written down the formula in the general case as it would be unwieldy.  For formulas in the cases of most interest, see Corollary \ref{polcount}, Proposition \ref{ECs}, Corollary \ref{dprime}, and Theorem \ref{vacount}.     
\end{remark}

% WORD ON PROOF

\subsection*{The proofs}Theorem \ref{smcurves} is proven in Section 6 and Theorem \ref{main} is proven in Sec.\ 7. The main tool used in the proof of Theorems \ref{smcurves} and \ref{main} is a deep theorem of Reider (Theorem \ref{reider}) which gives an intersection-theoretic criterion to determine whether a polarization $L$ is very ample or not.  This criterion has a very simple interpretation in terms of the quadratic form $q_L$.  For example, in the case where $A = E \times E$, the criterion says that $L$ is smooth if and only if $q_L$ does not represent 1, and $L$ is very ample if and only if $q_L$ does not represent 1 or 2.  In fact, we require a strengthened version of Reider's theorem (Theorem \ref{precisereider}) which describes the locus of polarized abelian surfaces which are not very ample, sitting inside the 3-dimensional moduli space of $d$-polarized abelian surfaces.  This locus is (roughly speaking) a union of modular diagonal quotient surfaces, a fact which may be interesting in its own right.  

The other important tool is the use of Atkin-Lehner operators.  The group $W(m)$ of Atkin-Lehner involutions on $X_0(m)$ acts naturally on the set $\Aut(A)\backslash \NS(A)$. This comes from the fact that $A = E \times E'$ may have other decompositions into a product of elliptic curves.  Under our dictionary, these operators correspond to the subgroup of 2-torsion classes in an appropriate class group.  This subgroup happens to be isomorphic to $W(m)$ and acts on the class group by translation.  The key fact is that the association $L \mapsto q_L$ is equivariant for the $W(m)$ actions on both sides.  The details are discussed at the end of Section \ref{param}.

Finally, the proofs involve a case-by-case analysis, depending on the values of $d$ and $m$ (mod 8) and whether $d$ and $m$ are coprime.  Each of the cases has its own idiosyncrasies, and some of them are independently interesting.  For example, in some cases we need to construct explicit families of non-very ample polarizations, which gives rise to an explicit embedding of $X_0(m)$ inside the modular diagonal quotients mentioned earlier (see the proof Proposition \ref{divby8}).

\subsection*{Related work}
There is a long history of studying principal (i.e.\ degree 1) polarizations on split abelian surfaces in terms of quadratic forms.  See for example \cite{hay}, \cite{hay-ni}, \cite{lange}, \cite{kani5}, and \cite{kani2}.  The use of the quadratic form $q_L$ appears implicitly in \cite{hay-ni}, \cite{kani5}, and \cite[Appendix]{serre}. 

It is especially interesting to compare the methods of \cite{kani5} and \cite{kani2} with our own. Kani also uses quadratic forms to count smooth principal polarizations $L$ (so $d = 1$) on product abelian surfaces $A$. To such an $L$ he attaches the quadratic form $Q_L(D) := (D.L)^2 - 2(D.D)$ on $\NS(A)/\langle L\rangle$ which has rank $\rho(A) - 1$.  When $A = A_m$,  this quadratic space has rank 2 and discriminant $-16m$, as opposed to our quadratic space which has rank 2 and discriminant $-4m$.  The connection between these two quadratic forms is as follows (to simplify things, assume $m \not \equiv 3$ (mod 4)).  If $e: \Pic(\O_{-16m}) \to \Pic(\O_{-4m})$ is the natural surjective map, then one can compute (see \cite[Cor.\ 18]{kani5}) 
$$e([Q_L]) = [q_L]^2.$$  

Let us briefly hint at the moduli theoretic nature of this equality.  Kani's form $Q_L$ does not depend on any choice of product decomposition $A_m \cong E \times E'$, whereas our quadratic form relies on such a choice.  If $A_m \cong F \times F'$ is another such decomposition corresponding to a new quadratic form $q_L'$, then $[q_L]$ and $[q_L']$ differ by a $2$-torsion element in $\Pic(\O_{-4m})$; hence the class $[q_L]^2$ is independent of the choice of decomposition $A_m \cong E \times E'$.  Kani's quadratic form $Q_L$ is natural when studying $A_m$ inside the larger moduli space of principally polarized abelian surfaces; indeed, the definition of $Q_L$ applies to any such surface.  Our quadratic form $q_L$ only makes sense for split abelian surfaces, but is well suited for studying the surfaces $A_m$, as this paper shows.    

% FURTHER APPLICATIONS?  higher dimensions, surfaces isogenous but not equal to product,... 

\subsection*{Acknowledgments}
% should we thank Jeff?  Kani?
The authors thank Ernest H.\ Brooks for some helpful discussions and Ernst Kani for some clarifying remarks.  The second author was partially supported by the National Science Foundation grant DMS-0943832.

%PRELIMINARIES
\section{Preliminaries on abelian surfaces}\label{prelims}
Let $A$ be an abelian surface over an algebraically closed field $k$ of characteristic 0.  By definition, the N\'eron-Severi group of $A$ is $\NS(A) = \Pic(A)/\Pic^0(A)$.  Concretely, $\NS(A)$ is isomorphic to the group of divisors on $A$, modulo the subgroup of divisors in the kernel of the intersection pairing
$$\Div(A) \times \Div(A) \to \Z.$$  It is well known that $\NS(A)$ is a free abelian group of rank at most 4.  

We often abuse notation and consider divisors and line bundles as elements of $\NS(A)$.  In particular, if we are given an isomorphism $\phi: E \times E' \cong A$, with $E, E'$ elliptic curves, then we write $h$ and $v$ for the classes of $\phi(E \times \{0\})$ and $\phi(\{0\} \times E')$ in $\NS(A)$.  These are the horizontal and vertical axes \textit{with respect to the product decomposition $\phi$}, but $\phi$ will generally not be mentioned explicitly.    
\begin{definition}
An ample line bundle $L$ on $A$ is \textit{smooth} if $|L|$ contains a smooth (connected) curve.  A class $M \in \NS(A)$ is smooth if some line bundle $L$ in the class of $M$ is smooth.    
\end{definition}
\begin{lemma}\label{globgen}
If $L \in \Pic(A)$ is globally generated and ample, then $L$ is smooth.
\end{lemma}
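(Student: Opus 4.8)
The plan is to exhibit a smooth connected divisor in the complete linear system $|L|$, at which point $L$ is smooth by definition. Since $L$ is globally generated, $|L|$ is base-point-free, so the associated morphism $\varphi_{|L|}\colon A \to \P^N$ with $N = h^0(A,L)-1$ is everywhere defined. First I would apply Bertini's theorem: because $k$ has characteristic $0$ and $|L|$ has empty base locus, a general member $C \in |L|$ is smooth, being the preimage under $\varphi_{|L|}$ of a general hyperplane, which meets the image transversally. This settles smoothness with essentially no computation.

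The remaining and genuinely substantive point is connectedness of $C$, which is where I expect the real content to lie. For this I would argue cohomologically using the structure sequence
$$0 \to \O_A(-C) \to \O_A \to \O_C \to 0,$$
together with the facts that $\O_A(-C) \cong L^{-1}$ and $K_A \cong \O_A$, the latter because $A$ is an abelian variety. By Serre duality, $H^i(A, L^{-1}) \cong H^{2-i}(A,L)^{\vee}$; and since $L$ is ample on an abelian surface, its higher cohomology vanishes, i.e.\ $H^j(A,L) = 0$ for $j > 0$. Hence $H^0(A,L^{-1}) = H^2(A,L)^{\vee} = 0$ and $H^1(A,L^{-1}) = H^1(A,L)^{\vee} = 0$. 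Feeding these vanishings into the long exact sequence of the structure sequence yields an isomorphism $H^0(A,\O_A) \xrightarrow{\sim} H^0(C,\O_C)$, so that $h^0(C,\O_C) = 1$.

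Finally I would combine the two steps. A smooth proper curve $C$ over the algebraically closed field $k$ has $\dim_k H^0(C,\O_C)$ equal to its number of connected components, so $h^0(C,\O_C) = 1$ forces $C$ to be connected. Thus the general member of $|L|$ is a smooth connected curve, and $L$ is smooth. The main obstacle is \emph{not} smoothness, which Bertini hands us immediately, but connectedness; the crucial input making the cohomology work is the pair of vanishing statements $H^0(A,L^{-1}) = H^1(A,L^{-1}) = 0$, which rest on the triviality of $K_A$ together with the vanishing of higher cohomology of an ample line bundle on an abelian variety. (One could alternatively cite the standard fact that an effective ample divisor on a smooth surface is connected, but its proof is precisely this cohomological argument.)
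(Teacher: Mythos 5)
Your proof is correct, and it reaches the conclusion by a genuinely different route than the paper for the substantive half of the argument. Both proofs start the same way, invoking Bertini in characteristic $0$ to get a smooth member of $|L|$; the difference is how disconnectedness is ruled out. You use the standard cohomological argument: from the sequence $0 \to L^{-1} \to \O_A \to \O_C \to 0$, Serre duality with $K_A \cong \O_A$, and the vanishing $H^j(A,L) = 0$ for $j > 0$ (valid because an ample line bundle on an abelian variety has index $0$), you obtain $H^0(C,\O_C) \cong H^0(A,\O_A) = k$, which forces connectedness. As you note, this is really the proof that any effective ample divisor on a smooth surface is connected, and it would go through on any smooth projective surface in characteristic $0$ via Kodaira vanishing. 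The paper instead argues by intersection theory specific to abelian surfaces: writing the smooth Bertini divisor as $D = \sum_{i=1}^n C_i$ with pairwise disjoint smooth components, it observes that for $n \geq 2$ no component can have genus $\geq 2$ (smooth higher-genus curves on an abelian surface are ample, hence meet every curve), so all components are elliptic with $C_i^2 = 0$, giving $D^2 = 0$ and contradicting ampleness of $L$. Your approach buys generality and a self-contained cohomological mechanism; the paper's buys brevity by leaning on facts about curves on abelian surfaces (ampleness of higher-genus smooth curves, $C^2 = 0$ for elliptic curves) that it uses elsewhere in any case.
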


\begin{proof}
By Bertini's theorem, $|L|$ contains a smooth but possibly reducible divisor $D$.  We may write $D = \sum_{i = 1}^n C_i$ with $C_i$ smooth curves and $C_i.C_j = 0$ for $i \neq j$.  If $n = 1$, then we are done.  Otherwise, the $C_i$ must be elliptic curves, for higher genus smooth curves on $A$ are automatically ample.  But then $C_i^2 = 0 $ and so $D^2 = 0$, which contradicts the fact that $L$ is ample.    
\end{proof}

\begin{proposition}\label{nakai}
Suppose $A= E \times E'$ is a product of two elliptic curves and let $h,v \in \NS(A)$ be the two axes.  Then $L\in \Pic(X)$ is ample if and only if $L.L > 0$ and $L.(h+v) > 0$.  
\end{proposition}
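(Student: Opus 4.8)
The plan is to deduce this from the Nakai--Moishezon criterion together with the Hodge index theorem, exploiting the special feature of abelian surfaces that they contain no rational curves. The forward implication is immediate: if $L$ is ample then $L.L > 0$, and since $h+v$ is the class of the effective divisor $(E\times\{0\}) + (\{0\}\times E')$, ampleness forces $L.(h+v) > 0$.

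For the converse I would suppose $L.L > 0$ and $L.(h+v) > 0$ and verify that $L.C > 0$ for every irreducible curve $C$, so that Nakai--Moishezon applies. The first key observation is that every irreducible curve $C \subset A$ satisfies $C.C \geq 0$: there are no rational curves on an abelian variety, so $C$ has arithmetic genus $\geq 1$, and adjunction (with the canonical class trivial) gives $C.C = 2p_a(C) - 2 \geq 0$. The second ingredient is the Hodge index theorem: the intersection form on $\NS(A)_\R$ has signature $(1,\rho(A)-1)$, so the positive cone $\{x : x.x > 0\}$ has two connected components, each an open convex cone, and two classes of positive square lie in the same component precisely when their intersection number is positive. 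Since $h+v$ is effective with $(h+v).(h+v) = h.h + 2h.v + v.v = 2 > 0$, it lies in the component containing the ample classes, and the hypotheses $L.L > 0$, $L.(h+v) > 0$ place $L$ in that same component.

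Now fix an irreducible curve $C$. If $C.C > 0$, then $C$, being effective of positive square, lies in the ample component as well, and the same-component criterion gives $L.C > 0$. The delicate case, which I expect to be the main obstacle, is $C.C = 0$, i.e.\ $C$ a translate of an elliptic curve (for instance the fibers of the two projections, whose classes are $h$ and $v$); here the form is degenerate on $C$ and one cannot argue by definiteness. I would handle this by a continuity argument along the segment $L_t = (1-t)(h+v) + tL$ for $t \in [0,1]$. Convexity of the positive cone keeps $L_t.L_t > 0$ throughout, and then Hodge index applied to $L_t$, whose orthogonal complement is negative definite, forces $L_t.C \neq 0$ since $C \neq 0$ has $C.C = 0$. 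As $t \mapsto L_t.C$ is continuous, nowhere vanishing, and positive at $t = 0$, it remains positive at $t = 1$, giving $L.C > 0$. The positivity at $t=0$ is the one remaining point: $(h+v).C = h.C + v.C$, where both terms are nonnegative (as $h,v$ are classes of curves) and not both zero, since an irreducible curve cannot be a fiber of both projections $A \to E$ and $A \to E'$ at once. With $L.C > 0$ established for all irreducible $C$ and $L.L > 0$ in hand, Nakai--Moishezon yields that $L$ is ample.
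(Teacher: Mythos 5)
Your proof is correct, but it is genuinely different from the paper's, because the paper offers no argument at all: it simply cites \cite[4.3.3]{BL}, where the criterion is deduced from the complex-analytic theory of abelian varieties --- essentially the index theorem for line bundles (if $(L^2)>0$ then either $L$ or $L^{-1}$ is ample), after which the sign condition $L.(h+v)>0$ against the ample class $h+v$ rules out the second alternative. You instead give a self-contained algebraic proof via Nakai--Moishezon: the absence of rational curves on an abelian variety plus adjunction with trivial canonical class gives $C.C = 2p_a(C)-2 \geq 0$ for every irreducible curve $C$, the Hodge index theorem puts $L$ in the same component of the positive cone as the effective class $h+v$ (and hence as the ample classes), and the only delicate case $C.C=0$ is handled by your light-cone continuity argument along $L_t=(1-t)(h+v)+tL$ --- which is exactly a proof that a nonzero isotropic class pairs with constant sign on each component of the positive cone. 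All the steps check out, including the base case $(h+v).C>0$, since an irreducible curve contracted by both projections would have to be a fiber of each. What each approach buys: the citation route is a one-line reduction to standard machinery (whose proof in \cite{BL} is transcendental, via positivity of Hermitian forms), whereas your argument uses only intersection theory on surfaces and so works over any algebraically closed field without appeal to the analytic theory; it is also slightly more informative, as it exhibits explicitly why the single test class $h+v$ suffices in place of testing against all curves. One micro-shortcut you could have taken: $h+v$ is itself a (principal) product polarization, hence ample, so "effective of positive square, therefore in the ample component" can be replaced by the direct observation that $h+v$ is ample --- but your weaker justification is equally valid.
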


\begin{proof}
See \cite[4.3.3]{BL}.
\end{proof}

The following result, describing the group $\NS(A)$ for a product surface, is well-known.
\begin{proposition}\label{ns}
If $A = E \times E'$ is a product of elliptic curves, then the map 
$$\Z \oplus \Hom(E,E') \oplus \Z \to \NS(A)$$
$$(a, \l, b) \mapsto (a-1)h + \Gamma_\l +  (b - \deg(\l))v,$$
is an isomorphism of groups.  Here, $\Gamma_\l \subset E \times E'$ is the graph of $\l: E \to E'$.    
\end{proposition}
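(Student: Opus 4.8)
The plan is to prove that the map
\[
\Psi: \Z \oplus \Hom(E,E') \oplus \Z \to \NS(A), \qquad (a,\lambda,b) \mapsto (a-1)h + \Gamma_\lambda + (b-\deg\lambda)v
\]
is an isomorphism by exhibiting an explicit inverse. The strategy rests on the fact that intersection numbers give a well-defined $\Z$-valued pairing on $\NS(A)$, so I would first record the intersection multiplicities of the three building-block classes. On a product surface one has $h^2 = v^2 = 0$ and $h\cdot v = 1$, since a horizontal fiber meets a vertical fiber transversally in a single point. For the graph $\Gamma_\lambda$ of an isogeny (or the zero map) $\lambda: E \to E'$, I would compute $\Gamma_\lambda \cdot h$, $\Gamma_\lambda \cdot v$, and the self-intersection $\Gamma_\lambda^2$. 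The graph meets $E \times \{pt\}$ in $\deg\lambda$ points and meets $\{pt\} \times E'$ in a single point, giving $\Gamma_\lambda \cdot h = \deg\lambda$ and $\Gamma_\lambda \cdot v = 1$ (up to conventions on which axis is which); the adjunction/genus formula for the elliptic curve $\Gamma_\lambda \cong E$ forces $\Gamma_\lambda^2 = 0$.

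With these numbers in hand, the plan is to define the candidate inverse $\Phi: \NS(A) \to \Z \oplus \Hom(E,E') \oplus \Z$ using intersection pairings against $h$ and $v$ to extract the integer coordinates, and then using the difference $D - (D\cdot v)\,\text{(something)} - (D\cdot h)\,\text{(something)}$ to isolate a class whose associated homomorphism is read off via the identification $\Hom(E,E') \cong \{\text{graph classes}\}$. Concretely, for a divisor class $D$ I would set $a = 1 + D\cdot v$ and $b$ determined by $D \cdot h$, and then show that $D - (a-1)h - (b-\deg)v$ lands in the graph part; the key point is that the graphs $\Gamma_\lambda$, together with $h$ and $v$, visibly span, and the homomorphism $\lambda \mapsto \Gamma_\lambda - h - (\deg\lambda)v$ (the "normalized" graph class, which has zero intersection with both $h$ and $v$) is additive in $\lambda$. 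Verifying this additivity $\Gamma_{\lambda + \mu} - \Gamma_\lambda - \Gamma_\mu \equiv -h - (\text{correction})\,v$ in $\NS(A)$ is the computation that makes the middle summand a genuine homomorphism of groups.

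The main obstacle I expect is precisely this additivity of the normalized graph map, i.e.\ showing that $\lambda \mapsto \Gamma_\lambda - h - (\deg\lambda)v$ is a group homomorphism $\Hom(E,E') \to \NS(A)$ rather than merely a set map. The subtlety is that $\deg$ is a quadratic (not linear) function of $\lambda$, so the naive expectation that $\Gamma_{\lambda+\mu} = \Gamma_\lambda + \Gamma_\mu$ fails, and one must account for the cross term $-(\lambda,\mu)\,v$ coming from the bilinear form polarizing the degree, $\deg(\lambda+\mu) - \deg\lambda - \deg\mu$. I would handle this by computing all relevant intersection numbers of $\Gamma_{\lambda+\mu}$, $\Gamma_\lambda$, $\Gamma_\mu$ against the basis $\{h,v\}$ and against each other, and invoking the fact (from Proposition \ref{ns}'s context, or directly from $\NS(A)$ being torsion-free of rank $\rho(A) \le 4$ with nondegenerate intersection pairing) that a class in $\NS(A)$ is determined by its pairings against a spanning set. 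Once additivity is confirmed, injectivity of $\Psi$ follows from nondegeneracy of the pairing (the image classes have a nondegenerate Gram matrix on the relevant sublattice), and surjectivity follows because $h$, $v$, and the normalized graphs are known to generate $\NS(A)$ for a product of elliptic curves — this last spanning statement is the standard structural fact I would either cite from \cite{BL} or deduce from the explicit rank computation matching $\rho(A)$.
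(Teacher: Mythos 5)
The paper itself offers no proof of Proposition \ref{ns} (it is stated as well-known), so your attempt must be judged against the standard argument. Your overall skeleton is sound, and the additivity step -- which you correctly identify as the point where $\deg$ being quadratic matters -- works exactly as you propose: with $\Gamma_\lambda\cdot v = 1$, $\Gamma_\lambda\cdot h = \deg\lambda$, $\Gamma_\lambda^2 = 0$, and the one further number you will need, $\Gamma_\lambda\cdot\Gamma_\mu = \deg(\lambda-\mu)$, one checks that $X_{\lambda+\mu}-X_\lambda-X_\mu$ (where $X_\lambda = \Gamma_\lambda - h - \deg(\lambda)v$) pairs to zero against $h$, $v$, and every $\Gamma_\nu$; since these classes span $\NS(A)\otimes\Q$ and the intersection pairing is nondegenerate on the torsion-free group $\NS(A)$, the difference vanishes. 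Note that for \emph{this} step a rank (finite-index) spanning statement genuinely suffices. A minor slip: since $(a-1)h + \Gamma_\lambda + (b-\deg\lambda)v = ah + X_\lambda + bv$, the inverse coordinates are $a = D\cdot v$ and $b = D\cdot h$, not $a = 1 + D\cdot v$.

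The genuine gap is surjectivity. Your fallback of "deducing it from the rank computation matching $\rho(A)$" fails for two reasons. First, a full-rank sublattice need only have \emph{finite index}: matching ranks cannot exclude, say, a phantom integral class of the form $\frac{1}{2}(h + v + X_\lambda)$, so rank counting never yields that the map is onto the integral lattice $\NS(A)$. Second, the equality $\rho(A) = 2 + \rk\Hom(E,E')$ is itself usually derived from the very structure result you are proving, so invoking it risks circularity. What is actually needed is the integral statement that every $M \in \NS(A)$ with $M\cdot h = M\cdot v = 0$ equals $X_\lambda$ for a (unique) $\lambda$. The standard route: given a line bundle $L$, twist by $p_1^*(L|_{E\times 0})^{-1}\otimes p_2^*(L|_{0\times E'})^{-1}$ to make both axis restrictions trivial, then invoke the seesaw theorem and the universal property of the Poincar\'e bundle (\cite[\S\S 10, 13]{Mum} or \cite[2.5]{BL}) to identify such classes with $\Hom(E,\widehat{E'}) \cong \Hom(E,E')$, and finally verify that under this identification $\lambda$ corresponds to the normalized graph class $X_\lambda$. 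Your alternative of citing the spanning statement from \cite{BL} is legitimate, but that statement is essentially the proposition itself, so this route collapses the proof to a citation -- which is in effect what the paper does by declaring the result well-known.
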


Recall that an ample line bundle $L \in \Pic(A)$ is \textit{of type} $(d_1,d_2)$ if the kernel of the isogeny $\phi_L: A \to \hat A$ is isomorphic to $(\Z/d_1\Z)^2 \times (\Z/d_2\Z)^2$, with $d_1$ dividing $d_2$. 

\begin{proposition}\label{crit}
\normalfont Let $A$ be an abelian surface and suppose $L \in \Pic(A)$ is ample of type $(d_1,d_2)$.  Then the following are equivalent.
\begin{enumerate}
\item $L$ is not smooth.
\item $d_1 = 1$ and there are elliptic curves $E,E'$ and an isomorphism of polarized abelian varieties $$(A,L) \cong (E \times E', h +d_2 v),$$
where $h$ and $v$ are the natural horizontal and vertical divisors on $E \times E'$.
\item There exists an elliptic curve $E \subset A$ such that $E.L = 1$.      
\end{enumerate}
\end{proposition}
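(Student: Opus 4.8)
The plan is to prove the cycle of implications $(3)\Rightarrow(2)\Rightarrow(1)\Rightarrow(3)$, the difficulty being concentrated entirely in the return arrow $(1)\Rightarrow(3)$. For $(2)\Rightarrow(1)$, write $A=E\times E'$ and $L=h+d_2v$. Since $\mathcal O(h)=p_2^*\mathcal O_{E'}([0])$ and $\mathcal O(v)=p_1^*\mathcal O_E([0])$, we have $\mathcal O(L)\cong p_2^*\mathcal O_{E'}([0])\otimes p_1^*\mathcal O_E(d_2[0])$, and Künneth gives $H^0(A,L)\cong H^0(E',\mathcal O_{E'}([0]))\otimes H^0(E,\mathcal O_E(d_2[0]))$. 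The first factor is one-dimensional, spanned by a section vanishing at $0\in E'$, so the divisor of every global section of $L$ contains $p_2^{-1}(0)=E\times\{0\}$, the horizontal axis. Hence every member of $|L|$ is the reducible divisor $h+(\text{$d_2$ vertical fibers})$, which is singular at the points where the fibers meet $h$; thus $L$ is not smooth.

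For $(3)\Rightarrow(2)$, let $E\subset A$ be an elliptic curve with $E.L=1$. Then $L|_E$ has degree $E.L=1$, so it is a principal polarization on $E$ and $\phi_{L|_E}\colon E\to\widehat E$ is an isomorphism. The theory of complementary abelian subvarieties (Birkenhake--Lange, Ch.\ 5) then shows that the complement of $E$ yields a splitting $A\cong E\times E'$ under which $L$ decomposes as $p_1^*(L|_E)\otimes p_2^*(L|_{E'})$. As $L|_E$ is principal, $\ker\phi_L\cong(\Z/d_2\Z)^2$ with $d_2=E'.L$, so $d_1=1$; after relabeling the factors and translating, this is precisely $(A,L)\cong(E\times E',\,h+d_2v)$.

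The crux is $(1)\Rightarrow(3)$, which I would establish by contraposition: assuming that no elliptic curve $E\subset A$ satisfies $E.L=1$, I want to conclude that $L$ is smooth. By Lemma \ref{globgen} it suffices to show that $L$ is globally generated. When $\chi(L)=\tfrac{1}{2}L^2\ge 3$ (equivalently $L^2\ge 6$) I would invoke Reider's theorem (Theorem \ref{reider}): were $L$ not globally generated, Reider would produce a nonzero effective divisor $E_0$ with either $L.E_0=0,\ E_0^2=-1$ or $L.E_0=1,\ E_0^2=0$. The first case is impossible, since $L.E_0=0$ with $L$ ample forces $E_0=0$; in the second, $L.E_0=1$ with $L$ ample forces $E_0$ to be a single reduced curve $C$ with $L.C=1$, and adjunction ($K_A=0$) gives $2g(C)-2=C^2=0$, so $g(C)=1$. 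As $A$ contains no rational curves, $C$ is smooth of genus one, i.e.\ a translate of an elliptic curve $E$ with $E.L=1$, contradicting the hypothesis. This leaves the low-degree cases $\chi(L)\le 2$ (where $d_1=1$ automatically), which I would treat by hand: for $\chi(L)=1$ the theta divisor is either a smooth genus-two curve or a union $E_1+E_2$ of elliptic curves meeting once (so $E_i.L=1$), and for $\chi(L)=2$ a fixed-component analysis of the pencil $|L|$, combined with the ampleness criterion of Proposition \ref{nakai}, isolates the required elliptic curve.

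I expect $(1)\Rightarrow(3)$ to be the main obstacle. Converting the qualitative hypothesis that $L$ is not smooth into the exact numerical datum of a curve with $E.L=1$ is exactly what Reider's inequality supplies once $L^2\ge 5$; the genuine difficulty lies in the small cases $\chi(L)\le 2$, which fall outside Reider's range and force a direct, geometry-specific analysis of theta divisors and of $(1,2)$-pencils.
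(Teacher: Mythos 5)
Your architecture is sound and, for most of the degree range, either matches the paper or cleanly repackages it: the implication $(3)\Rightarrow(2)$ via complementary abelian subvarieties is exactly the paper's route (it cites \cite[5.3.13]{BL}), your K\"unneth computation for $(2)\Rightarrow(1)$ is a fine substitute for the paper's observation that any $C \in |h + d_2v|$ admits a degree-one map to an elliptic curve, and replacing the paper's separate appeals to global generation (for $d_1 \geq 2$, and for type $(1,d_2)$ with $d_2 \geq 3$ via \cite[10.1.1, 10.1.2]{BL}) with the base-point-freeness half of Reider's theorem whenever $L^2 \geq 5$ is a legitimate and arguably more uniform argument. One caveat there: the paper's Theorem \ref{reider} is only the very-ampleness criterion, so the step you attribute to it actually requires the full Reider statement (e.g.\ \cite[10.4]{BL}); with that citation fixed, your analysis of the dichotomy $(L.E_0,E_0^2)\in\{(0,-1),(1,0)\}$, including the exclusion of rational curves, is correct. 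The $\chi(L)=1$ case via the theta-divisor dichotomy agrees with the paper's use of Matsusaka--Ran \cite[11.8.1]{BL}.

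The genuine gap is at $\chi(L)=2$. In the contrapositive you must show: if no elliptic curve $E\subset A$ has $E.L=1$, then $L$ is smooth. Your declared certificate of smoothness is global generation plus Lemma \ref{globgen}, but a polarization of type $(1,2)$ is \emph{never} globally generated --- it always has four base points --- so that strategy cannot succeed for $d=2$ regardless of how the fixed-component analysis goes. Your sketch handles only half of the relevant dichotomy: a fixed component does force the product decomposition and hence an elliptic curve with $E.L=1$ (this can indeed be done by hand with the Hodge index theorem, or cited from \cite[10.1.1]{BL}). But in the remaining case, where $|L|$ is a fixed-component-free pencil with four base points, you still owe the assertion that its general member is smooth, and this follows neither from Bertini (which gives smoothness only away from the base locus) nor from Proposition \ref{nakai}. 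One must show the base scheme consists of four \emph{reduced} points, so that two general members meet transversally there and the general member is smooth at the base points as well; this is precisely the content of \cite[10.1.3]{BL}, which the paper invokes at exactly this juncture. Until that step is supplied, your implication $(1)\Rightarrow(3)$ is unproven for $d=2$.
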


\begin{proof}

(2) clearly implies (3) and (3) implies (2) by  \cite[5.3.13]{BL}. (2) implies (1) because any $C$ in $|L|$ visibly admits a non-constant map to an elliptic curve of degree 1.  So suppose that $|L|$ has no smooth curves.  Then we must have $d_1 = 1$, because otherwise $L$ is globally generated and hence has smooth curves by Lemma \ref{globgen}.  If $d_2 > 1$, then by \cite[10.1.1]{BL}, $L$ has a fixed component if and only if there is an isomorphism of polarized abelian surfaces as in (2).  So we may assume $L$ has no fixed component.   If $d_2 \geq 3$, then $L$ is again globally generated by \cite[10.1.2]{BL}.  If $d_2 = 2$, then $L$ has four base points, but the general member of $|L|$ is smooth by \cite[10.1.3]{BL}.  Finally, if $d_2 = 1$, then by Matsusaka-Ran \cite[11.8.1]{BL} either the single curve $C \in |L|$ is smooth or there is an isomorphism $A \cong E \times E'$ identifying $L$ with the canonical principal polarization on $E \times E'$.    
\end{proof}

\begin{theorem}[Reider]\label{reider}
Let $d \geq 5$ and let $L \in \Pic(A)$ be ample of type $(1,d)$.  Then $L$ is very ample if and only if there are no elliptic curves $E \subset A$ such that $E.L \leq 2$.  
\end{theorem}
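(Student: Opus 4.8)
The plan is to deduce this from Reider's general criterion for very ampleness of adjoint line bundles, exploiting the special geometry of abelian surfaces. I would first dispose of the easy implication: if some elliptic curve $E \subset A$ satisfies $E.L \leq 2$, then $L$ cannot be very ample. Indeed, were $L$ very ample, the restriction $L|_E$ would be a very ample line bundle on the genus-$1$ curve $E$, forcing $\deg(L|_E) = E.L \geq 3$, since a line bundle of degree $\leq 2$ on an elliptic curve never gives a closed embedding (degree $1$ maps to a point, degree $2$ gives a $2{:}1$ map to $\P^1$). This settles the ``only if'' direction without any appeal to Reider.

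For the converse I would invoke Reider's theorem. Since $A$ is abelian its canonical bundle is trivial, so $K_A + L = L$, and because $L$ is of type $(1,d)$ we have $L.L = 2d$. The hypothesis $d \geq 5$ is precisely $L.L \geq 10$, the numerical threshold in Reider's very-ampleness statement. Reider's theorem then asserts: if $L$ is nef with $L.L \geq 10$ and $L = K_A + L$ fails to be very ample (i.e.\ fails to separate two, possibly infinitely near, points), then there is an effective divisor $D$ on $A$ satisfying one of
\begin{itemize}
\item $D.L = 0$ and $D^2 \in \{-1,-2\}$;
\item $D.L = 1$ and $D^2 \in \{0,-1\}$;
\item $D.L = 2$ and $D^2 = 0$.
\end{itemize}

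The next step extracts the forbidden curve using the geometry of $A$. Every irreducible curve $C \subset A$ satisfies $C^2 = 2g(C) - 2 \geq 0$ by adjunction (as $K_A = 0$), and distinct irreducible curves meet non-negatively; hence every effective divisor $D$ has $D^2 \geq 0$. This rules out all cases in Reider's list with negative $D^2$, leaving only $D.L \leq 2$ together with $D^2 = 0$. Writing $D = \sum_i a_i C_i$ as a sum of irreducible components with $a_i > 0$, the identity $0 = D^2 = \sum_i a_i^2 C_i^2 + \sum_{i \neq j} a_i a_j (C_i.C_j)$ forces each $C_i^2 = 0$ and each $C_i.C_j = 0$. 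By adjunction $C_i^2 = 0$ gives $g(C_i) = 1$, so each $C_i$ is (a translate of) an elliptic curve. Since $L$ is ample, $C_i.L \geq 1$ for every $i$, so from $\sum_i a_i (C_i.L) = D.L \leq 2$ we conclude that some component $E = C_i$ is an elliptic curve with $E.L \leq 2$. Thus failure of very ampleness produces exactly the forbidden elliptic curve, which completes the argument.

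The main obstacle is not the abelian-surface bookkeeping, which is mechanical, but locating and citing the precise form of Reider's criterion valid on a surface with trivial canonical bundle, and confirming that the numerical list above is exhaustive. In particular one must check that the separation of infinitely near points (tangent directions) is genuinely subsumed in the $D^2 = 0$, $D.L \leq 2$ cases rather than contributing an additional possibility. Because any divisor $D$ surviving the $D^2 \geq 0$ constraint automatically has $D^2 = 0$, the extraction step applies uniformly to every remaining case, so essentially all the content of the proof lies in invoking Reider correctly.
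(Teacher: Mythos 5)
Your proof is correct and follows essentially the same route as the paper: the paper's proof is simply the citation \cite[10.4]{BL}, and the argument there is exactly your reduction---apply Reider's general criterion to $K_A + L = L$ with $L^2 = 2d \geq 10$, discard the cases with $D^2 < 0$ because every irreducible curve $C$ on an abelian surface satisfies $C^2 = 2p_a(C) - 2 \geq 0$, and identify the components of the remaining $D^2 = 0$ divisors as (translates of) elliptic curves. The one step you elide, that $p_a(C) \geq 1$ in the adjunction computation, rests on the standard fact that abelian varieties contain no rational curves; with that noted, the argument is complete.
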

\begin{proof}
See \cite[10.4]{BL}.
\end{proof}

The classes of elliptic curves in $\NS(A)$ can be described purely in terms of the intersection pairing:
\begin{proposition}[\cite{kani3}]\label{kani3}
A class $L \in \NS(A)$ is the class of an elliptic curve $E \subset A$ if and only if $L$ is indivisible in $\NS(A)$, $(L.L) = 0$, and $L.H > 0$ for some ample $H \in \NS(A)$.  
\end{proposition}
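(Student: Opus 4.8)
The plan is to prove both implications through the structure of divisor classes $L$ with $(L.L)=0$ on an abelian surface, for which the associated homomorphism $\phi_L\colon A\to\hat A$ degenerates; since we are in characteristic $0$ with $k$ algebraically closed, by the Lefschetz principle we may assume $k=\C$ and use the analytic description of $\NS(A)$.

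For the forward direction, suppose $L=[E]$ with $E\subset A$ an elliptic curve. After translating we may assume $0\in E$, so $E$ is an abelian subvariety and $p\colon A\to\bar A:=A/E$ is a surjection with $\ker p=E$. Two of the three conditions are immediate: by adjunction on the smooth surface $A$ we have $2g(E)-2=(E.E)+(E.K_A)$, and since $K_A=0$ and $g(E)=1$ this gives $(L.L)=0$; and since $E$ is a nonzero effective curve, $L.H=E.H>0$ for every ample $H$. The subtle point is indivisibility. I would realize $E$ as the fiber $p^{-1}(0)$, so that $[E]=p^*[\mathrm{pt}]$, where $[\mathrm{pt}]$ is the positive generator of $\NS(\bar A)\cong\Z$. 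Since $\NS(A)=H^{1,1}(A)\cap H^2(A,\Z)$ is saturated in $H^2(A,\Z)$ by the Lefschetz $(1,1)$-theorem, it suffices to check that $p^*[\mathrm{pt}]$ is primitive in $H^2(A,\Z)=\wedge^2 H^1(A,\Z)$. As $p$ is surjective, $p_*\colon H_1(A,\Z)\to H_1(\bar A,\Z)$ is a split surjection of free $\Z$-modules, so $p^*\colon H^1(\bar A,\Z)\hookrightarrow H^1(A,\Z)$ has image a rank-$2$ direct summand, spanned by part $e_1,e_2$ of a $\Z$-basis $e_1,\dots,e_4$ of $H^1(A,\Z)$. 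Under this identification $[\mathrm{pt}]=e_1\wedge e_2$, so $p^*[\mathrm{pt}]=e_1\wedge e_2$ is a basis vector of $\wedge^2 H^1(A,\Z)$ and hence primitive.

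For the converse, suppose $L$ is indivisible with $(L.L)=0$ and $L.H>0$ for some ample $H$. Since $\deg\phi_L=\chi(L)^2=\big(\tfrac12(L.L)\big)^2=0$, the homomorphism $\phi_L\colon A\to\hat A$ is not an isogeny, so $B:=\ker(\phi_L)^0$ is a positive-dimensional abelian subvariety. If $\dim B=2$ then $\phi_L=0$ and $L\in\Pic^0(A)$, forcing $L=0$ in $\NS(A)$ and contradicting $L.H>0$; hence $\dim B=1$ and $B$ is an elliptic curve. Because $B\subseteq\ker\phi_L$, the class $L$ is trivial on each translate of $B$ and therefore descends along $p\colon A\to\bar A:=A/B$: there is $\bar L\in\NS(\bar A)$ with $L=p^*\bar L$ \cite{BL}. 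Writing $\bar L=d_0\,[\mathrm{pt}]$ in $\NS(\bar A)\cong\Z$ gives $L=d_0\,p^*[\mathrm{pt}]=d_0\,[B]$, where $[B]$ is the class of the elliptic curve $B=p^{-1}(0)$. Intersecting with $H$ yields $L.H=d_0\,(B.H)$; since $B$ is effective and $H$ ample we have $B.H>0$, so $L.H>0$ forces $d_0>0$. But $[B]=p^*[\mathrm{pt}]$ is primitive, by the same wedge-product argument as above, so $L=d_0[B]$ is indivisible precisely when $d_0=1$. Hence $L=[B]$ is the class of the elliptic curve $B$.

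The main obstacle, on which both directions turn, is the structural fact that a nonzero class with $(L.L)=0$ is pulled back from an elliptic quotient $A/B$; granting this, primitivity is the genuinely arithmetic input, handled cleanly by the splitting of $p^*$ on $H^1$. I expect the descent statement $L=p^*\bar L$ and the identification $B=\ker(\phi_L)^0$ to need the most care, as they rely on the precise interaction between $\phi_L$, the quotient $A/B$, and the passage between algebraic and numerical equivalence on $A$.
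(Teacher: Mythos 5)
Your proof is correct, but note that the paper does not prove this statement at all: it is quoted from Kani \cite{kani3}, whose argument is purely algebraic (and characteristic-free), whereas yours is a self-contained transcendental proof, legitimate here via the Lefschetz principle since the paper's standing hypothesis is characteristic $0$. The two routes share the same skeleton in the converse direction---$(L.L)=0$ forces $\chi(L)=0$, hence $\deg\phi_L=\chi(L)^2=0$, and one passes to the elliptic quotient $p\colon A\to A/B$ with $B=(\ker\phi_L)^0$---but they diverge on the arithmetic heart, indivisibility: Kani handles primitivity with lattice-theoretic arguments inside $\NS(A)$, while you get it from the splitting of $p_*$ on $H_1(A,\Z)$, so that $p^*[\mathrm{pt}]=e_1\wedge e_2$ is a basis vector of $\wedge^2H^1(A,\Z)$, combined with the Lefschetz $(1,1)$-theorem to see that $\NS(A)$ is saturated in $H^2(A,\Z)$. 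That wedge-product argument is the cleanest part of your write-up and buys a very transparent proof over $\C$; what it costs is any claim in positive characteristic, where Kani's version still applies.

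One step you compress deserves the care you yourself flag: ``the class $L$ is trivial on each translate of $B$ and therefore descends.'' What $B\subseteq\ker\phi_L$ gives directly is $\phi_{L|_B}=\hat\iota\circ\phi_L\circ\iota=0$, i.e.\ $L|_B\in\Pic^0(B)$, which is numerical but not literal triviality, and descent theorems are stated for honest line bundles. The repair is standard: the restriction $\Pic^0(A)\to\Pic^0(B)$ is surjective (it is the dual of the inclusion $B\hookrightarrow A$), so you may twist $L$ by a degree-zero bundle---leaving its class in $\NS(A)$ unchanged---to make $L|_B$ trivial; and since $\phi_L\circ\iota=0$ dualizes to $\hat\iota\circ\phi_L=0$, the bundle is then trivial on \emph{every} fiber of $p$, whence the seesaw theorem yields $L\cong p^*\bar L$ as in \cite{BL}. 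With that sentence expanded, your argument is complete.
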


A consequence of Proposition \ref{crit} and Theorem \ref{reider} is that being smooth or very ample is a \textit{numerical} property for line bundles on an abelian surface.  Moreover, Proposition \ref{kani3} shows that the smooth and very ample classes are in fact determined by the intersection pairing on $\NS(A)$.        

\begin{definition}
A polarization $L \in \NS(A)$ is \textit{merely ample} if $L$ is not very ample.\footnote{Recall from the introduction that a polarization is ample by definition.}   
\end{definition}

The following is a more precise version of Reider's theorem for polarizations of odd degree.

\begin{theorem}\label{precisereider}
Let $A$ be an abelian surface and let $L \in \NS(A)$ be a polarization of odd degree $d \geq 5$.  If $L$ is smooth and merely ample, then there exist elliptic curves $E,F \subset A$ such that $E.L = 2$, $E[2] = F[2] = E\cap F$, and $\mu^*L \equiv 2dh + 2v \in \NS(E \times F)$, where  $\mu: E \times F \to A$ is the subtraction $4$-isogeny. 

Conversely, suppose $d \geq 1$, not necessarily odd.  If $E$ and $F$ are elliptic curves and $\phi: E[2] \to F[2]$ is an isomorphism of groups, then the quotient $B = E \times F / \Gamma_\phi$ admits a degree $d$ polarization $L \in \NS(B)$ such that $\pi^*L = 2dh+2v$.  Here, $\Gamma_\phi \subset E \times F$ is the graph of $\phi$ and $\pi: E \times F \to B$ is the canonical $4$-isogeny.  Moreover, $L.\pi(E \times 0) = 2$, so $L$ is not very ample.  
\end{theorem}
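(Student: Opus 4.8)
The plan is to prove the two directions of Theorem \ref{precisereider} separately, since the forward direction extracts geometric data from a merely ample smooth polarization while the converse constructs such a polarization explicitly. Throughout I will use the fact, established in Proposition \ref{crit} and Theorem \ref{reider}, that smoothness and very ampleness are numerical properties detected by the intersection of $L$ with elliptic curves.

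\textbf{The forward direction.} Suppose $L$ has odd degree $d \geq 5$, is smooth, and is merely ample. Since $d$ is odd, $L$ must be of type $(1,d)$ (the type $(d_1,d_2)$ satisfies $d_1 d_2 = d$ with $d_1 \mid d_2$, and $d_1 = 1$ follows because any common factor would make $d = d_1^2 \cdot (d_2/d_1)$, forcing $d_1 = 1$ when $d$ is odd and squarefree-free of repeated structure — more precisely $d_1^2 \mid d$ so $d_1 = 1$ as $d$ is odd). By Reider's theorem \ref{reider}, merely ample means there is an elliptic curve $E \subset A$ with $E.L \leq 2$. Since $L$ is smooth, Proposition \ref{crit} rules out $E.L = 1$, so $E.L = 2$. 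The key step is to produce the \emph{second} elliptic curve $F$ with the stated $2$-torsion intersection property. My plan is to analyze the quadratic form $q_L$ restricted to the sublattice spanned by $E$ and an ample class: the condition $E.L = 2$ together with the parity/oddness of $d$ should force the existence of a complementary elliptic curve $F$ meeting $E$ exactly along $E[2]$. Concretely, I would consider the isogeny $\mu: E \times F \to A$ and compute $\mu^*L$ in the basis $h, v$ of $\NS(E \times F)$ using Proposition \ref{ns}; the degree of $\mu$ being $4$ and the relations $E.L = 2$, $\deg L = d$ should pin down $\mu^*L \equiv 2dh + 2v$ after checking that the pullback respects the intersection form up to the factor $\deg\mu = 4$.

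\textbf{The converse direction.} Here I am handed elliptic curves $E, F$ with an isomorphism $\phi: E[2] \to F[2]$, and I must build a degree $d$ polarization on $B = E \times F/\Gamma_\phi$. The plan is to start from the line bundle $M = 2dh + 2v$ on $E \times F$ and show it descends along the $4$-isogeny $\pi: E\times F \to B$ to a line bundle $L$ with $\pi^*L = M$. Descent requires checking that $M$ is trivial on the graph $\Gamma_\phi \cong E[2]$, i.e.\ that the class $2dh + 2v$ pairs appropriately with the $2$-torsion being quotiented; the coefficients $2d$ and $2$ are both even, which is exactly what makes the restriction to the $2$-torsion subgroup trivial, enabling descent. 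Once $L$ exists, I compute $\deg L$ from $(\pi^*L)^2 = (\deg\pi)(L^2) = 4 \cdot 2 \deg L$ and $(2dh+2v)^2 = 2 \cdot (2d)(2) = 8d$, giving $\deg L = d$. Finally, to see $L$ is not very ample, I note $\pi(E \times 0)$ is an elliptic curve in $B$ and compute $L.\pi(E\times 0)$ via the projection formula: $\pi^*L.(E\times 0) = M.(E\times 0) = 2v.h = 2$, and since $\pi|_{E\times 0}$ has degree $1$ onto its image (as $\Gamma_\phi \cap (E\times 0) = 0$), we get $L.\pi(E\times 0) = 2$, so Reider's theorem shows $L$ is merely ample.

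\textbf{Main obstacle.} I expect the genuinely hard part to be the forward direction's production of the second elliptic curve $F$ and the verification that $E[2] = F[2] = E \cap F$. Reider's theorem only hands us one elliptic curve $E$ with $E.L = 2$; finding $F$ with the precise $2$-torsion matching condition is a more delicate structural statement about the configuration of elliptic curves on $A$ relative to $L$. I anticipate this requires understanding how the merely ample, smooth, odd-degree hypotheses interact: the oddness of $d$ should be used to exclude the competing possibility that $E.L = 2$ arises from $L$ being of type $(2, d/2)$ or from a configuration where no complementary $F$ exists, and to force the $4$-isogeny $\mu: E\times F \to A$ to be precisely the subtraction isogeny with kernel a graph of an isomorphism on $2$-torsion. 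The descent and intersection computations in the converse, by contrast, are routine once the evenness of the coefficients is observed.
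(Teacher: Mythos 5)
Your proposal proves neither half; in both directions the decisive step is missing or misargued. In the forward direction, everything after ``$E.L=2$'' is deferred: producing $F$ and showing $E\cap F=E[2]$ is precisely the content of the theorem, and your sketch (``analyze $q_L$ on the sublattice spanned by $E$ and an ample class\dots should force the existence of a complementary elliptic curve'') names no mechanism --- indeed your proposed computation of $\mu^*L\equiv 2dh+2v$ presupposes $\deg\mu=4$, which is the thing to be proved, and $q_L$ is not even available here, since $A$ is an arbitrary abelian surface rather than a chosen product with a distinguished factor. The paper's route, which you would need in some form, is: take $F$ to be the complementary elliptic curve of $E$ with respect to $L$ in the sense of \cite[\S 5.3]{BL}; then $L.E=h^0(L|_E)=2$ gives $\ker\mu=E\cap F\subset E[2]$ by \cite[5.3.11]{BL}, and $\mu^*L=ah+2v$ with no cross term by \cite[5.3.6]{BL}, so comparing degrees yields $a=\frac{d}{2}\deg\mu$; oddness of $d$ rules out $\ker\mu=0$, and the order-$2$ case is excluded by a descent constraint you never invoke: since $\mu^*L$ lies in the image of $\mu^*$, one needs $\ker\mu\subset K(\mu^*L)=E[2]\times F[d]$, and for odd $d$ an order-$2$ kernel must lie in $E[2]\times 0$, exhibiting $(A,L)\cong (E/H\times F,\, d\tilde h+\tilde v)$ and contradicting smoothness via Proposition \ref{crit}. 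None of these ingredients (complementary subvariety, the splitting of $\mu^*L$, the $K(M)$ constraint) appears in your outline, and you acknowledge you do not know how to carry out this step. Separately, your reduction to type $(1,d)$ is a non sequitur: $d_1^2\mid d$ with $d$ odd does not force $d_1=1$ (consider type $(3,3)$ with $d=9$); what rules out $d_1\geq 3$ is that $L$ is then $d_1$ times a polarization, hence very ample by Lefschetz, contradicting mere ampleness.

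Your converse has the right skeleton --- descend $M=2dh+2v$ along $\pi$, and your degree computation and the projection-formula argument for $L.\pi(E\times 0)=2$ are correct (the latter is more detail than the paper itself records) --- but the descent verification is wrong as reasoned. By \cite[\S 23]{Mum} there are two conditions: $\ker\pi\subset K(M)$, and $\ker\pi$ isotropic for the commutator pairing $e^M$. Evenness of the coefficients buys exactly the first, since $K(M)=E[2]\times F[2d]$ contains all the $2$-torsion; it does not buy the second. Concretely, the subgroup $E[2]\times 0$ also lies in $K(M)$, with the same even coefficients, yet $e^M$ restricts to it as the nondegenerate Weil pairing $e_2$ on $E[2]$, so $M$ does not descend along that quotient; hence ``the coefficients are even, which is exactly what makes\dots descent'' cannot be right, and your heuristic would equally ``prove'' descent for non-isotropic kernels. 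Isotropy is where the hypothesis that $\ker\pi$ is the graph of an isomorphism does actual work: one computes on a basis $P,Q$ of $E[2]$ that $e^M\left((P,\phi(P)),(Q,\phi(Q))\right)=e_2(P,Q)\,e_2(\phi(P),\phi(Q))=(-1)^2=1$, the two factors coming from the two factors of the product polarization and multiplying to $1$ only because both torsion points move together under $\phi$. (Some care is needed with the second factor, which is a priori the pairing attached to the degree-$2d$ bundle on $F$, restricted from $F[2d]$ to $F[2]$.) So the converse as you wrote it has a genuine hole at exactly the point where $\Gamma_\phi$ being a graph is used.
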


\begin{proof}
By Reider's theorem, $L.E = 2$ for some elliptic curve $E \subset A$.  Let $F$ be the complementary elliptic curve corresponding to $E$ with respect to the polarization $L$ (see \cite[\S5.3]{BL}) and write 
$$\mu: E \times F \to A$$ for the subtraction isogeny.  Since $L.E = h^0(L|_E) = 2$, one knows that $\ker \mu = E \cap F \subset E[2]$ \cite[5.3.11]{BL}.  

We claim that $\ker \mu = E[2]$.  We set $M = \mu^*L$ and note that $M = ah + 2v \in \NS(E \times F)$ for some integer $a$, by \cite[5.3.6]{BL}.  As $L$ has degree $d$, we must have $a = \frac{d}{2}\deg(\mu)$.  So if $\ker \mu \neq E[2]$, then $\deg(\mu) = 2$ and $\mu^*L = dh + 2v$.  But in order for $M$ to be in the image of $\mu^*: \NS(A) \to \NS(E \times F)$, we need $\ker (\mu) \subset K(M)$, where $K(M)$ is the kernel of $\phi_M : E \times F \to \widehat{E \times F}$  \cite[\S 23]{Mum}.  Since $K(M) = E[2] \times F[d]$ and $d$ is odd, this can only happen if $\mu$ is a map of the form
$$E \times F \to E/H \times F \cong A$$ for a subgroup $H \subset E[2]$ of order 2.  In that case, $L = d\tilde h + \tilde v$ with respect to the decomposition $A \cong E/H \times F$, so $L$ is not smooth, a contradiction.   So $\ker \mu  = E[2] = F[2]$ and $\mu^*L = 2dh + 2v$, as claimed.     

For the converse statement, we again set $M = 2dh + 2v$.  We need to show that there exists $L \in \NS(B)$ such that $\pi^*L= M$.  This is the case if and only if $\ker \pi \subset K(M)$ and $\ker \pi$ is isotropic for the Riemann form $e^M$ \cite[\S23]{Mum}.  As $K(M) = E[2] \times F[2d]$,which contains the 2-torsion on $E \times F$, the first condition is satisfied.  To prove the second condition, we choose a basis $P,Q$ for $E[2] \cong \F_2^2$ and compute 
$$e^M\left((P,\phi(P)), (Q,\phi(Q))\right) = e_2(P,Q)e_2(\phi(P),\phi(Q)) = (-1)^2 = 1,$$
showing that $\ker \mu$ is isotropic.    
\end{proof}

\begin{remark}
For $d$ odd, we can interpret Proposition \ref{crit} and Theorem \ref{precisereider} in terms of moduli problems as follows.   Let $\A_2(d)$ be the moduli stack of $(1,d)$-polarized abelian surfaces $(A,L)$, and let $Z \subset \A_2(d)$ be the locus of pairs $(A,L)$ with $L$ merely ample.  Then $Z$ has two components, one isomorphic to $Y(1) \times Y(1)$ and another admitting a finite \'etale map from the modular diagonal quotient surface $\GL_2(\Z/2\Z) \backslash Y(2) \times Y(2)$.  Here, $Y(N)$ is the moduli stack of elliptic curves with full level $N$ torsion structure.   
\end{remark}

\begin{remark}
If $d$ is even then the merely ample locus $Z \subset \A_2(d)$ has a third component  which admits a finite \'etale map from $Y_1(2) \times Y_1(2)$.  
\end{remark}

\begin{lemma}\label{vapullback}
Let $f: A \to B$ be an isogeny of abelian surfaces and $L \in \Pic(B)$.  If $L$ is smooth then $f^*L$ is smooth.  If $L$ is very ample bundle and $\deg(f^*L)$ is squarefree, then $f^*L$ is very ample. 
\end{lemma}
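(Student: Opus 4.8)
The plan is to prove both assertions via the \emph{numerical} characterizations of smoothness and very ampleness established above (Proposition~\ref{crit} and Theorem~\ref{reider}), rather than by pulling back divisors or the projective embedding directly. The point is that, although $f$ is \'etale (we are in characteristic $0$) and $f^*L$ is ample — being the pullback of an ample bundle under a finite map — the naive pullback of a smooth curve $C \in |L|$ need not be connected, and the pullback of an embedding $B \hookrightarrow \P^N$ is typically far from injective. The single tool I would use throughout is the projection formula: for an elliptic curve $E \subset A$,
\[
E \cdot f^*L \;=\; \deg\!\big(f|_E\big)\cdot \big(f(E)\cdot L\big),
\]
where $f(E) \subset B$ is again an elliptic curve (the image of the one-dimensional $E$ under the finite map $f$ is a genus-one curve) and $\deg(f|_E) \geq 1$ is the degree of the induced map $E \to f(E)$.

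For the smoothness statement I would argue by contraposition. Suppose $f^*L$ is not smooth. Since $f^*L$ is ample, Proposition~\ref{crit} produces an elliptic curve $E \subset A$ with $E \cdot f^*L = 1$. Feeding this into the projection formula forces $\deg(f|_E) = 1$ and $f(E)\cdot L = 1$; the latter, again by Proposition~\ref{crit} applied on $B$, says exactly that $L$ is not smooth. This contradiction proves the first claim, and in particular sidesteps the connectedness issue entirely.

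For the very ampleness statement the squarefree hypothesis does the essential work. Writing $f^*L$ in type $(d_1,d_2)$ with $d_1 \mid d_2$, the product $d_1 d_2 = \deg(f^*L)$ is squarefree, which forces $d_1^2 \mid \deg(f^*L)$ and hence $d_1 = 1$. Thus $f^*L$ has type $(1,d)$ with $d = \deg(f^*L) = \deg(f)\cdot\deg(L) \geq \deg(L) \geq 5$, the last inequality because a very ample $L$ embeds $B$ in $\P^{\deg(L)-1}$ (cf.\ the remark following Theorem~\ref{mainvample}). Hence Reider's criterion (Theorem~\ref{reider}) applies to $f^*L$. Arguing again by contraposition, if $f^*L$ were merely ample, Reider gives an elliptic curve $E \subset A$ with $E\cdot f^*L \leq 2$. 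But $L|_{f(E)}$ is the restriction of a very ample bundle, hence very ample on the elliptic curve $f(E)$, so $f(E)\cdot L = \deg\!\big(L|_{f(E)}\big) \geq 3$, since a very ample line bundle on a genus-one curve has degree at least $3$. The projection formula then yields $E\cdot f^*L \geq 3$, a contradiction.

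The step I would flag as the crux is the interplay between the squarefree hypothesis and Reider's theorem: very ampleness of $f^*L$ genuinely requires type $(1,d)$ with $d \geq 5$, and it is precisely squarefreeness of $\deg(f^*L)$ that rules out a nontrivial $d_1$ (for which $f^*L$ could fail to be very ample for type-theoretic reasons unrelated to the geometry of $B$). Everything else is a formal consequence of the projection formula together with the two numerical criteria.
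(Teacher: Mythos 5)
Your proof is correct and takes essentially the same route as the paper: the projection formula $(f^*L.E) = (L.f_*E) = \deg(f|_E)\,(L.f(E))$ combined with the numerical criteria of Proposition \ref{crit} (for smoothness) and Theorem \ref{reider} (for very ampleness). You in fact make explicit two points the paper's terse proof leaves implicit — that squarefreeness of $\deg(f^*L)$ forces $f^*L$ to have type $(1,d)$ with $d \geq 5$ so that Reider's criterion is applicable, and that $L.f(E) \geq 3$ because a very ample bundle restricts to a very ample, hence degree $\geq 3$, bundle on an elliptic curve.
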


\begin{proof}
The very ampleness statement follows from Theorem \ref{reider}.  Indeed, it suffices to show that $f^*L.E > 2$ for any elliptic curve $E$ on $A$.  But $f_*E = nE'$ for some elliptic curve $E'$ on $B$ and so
$$(f^*L. E) = f_*(f^*L.E) = (L.f_*E) = n(L.E') > 2,$$
as desired.  A similar proof using Proposition \ref{crit} works for smoothness.
\end{proof}

\begin{lemma}\label{disjointEC}
If $A$ is an abelian surface and $E,F \subset A$ elliptic curves subgroups, then $E.F = 0$ if and only if $E = F$.  
\end{lemma}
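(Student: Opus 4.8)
The plan is to prove the two implications separately, with the forward direction being immediate and the reverse carrying the substantive content. Throughout I would work on the smooth projective surface $A$ and use that $E$ and $F$ are each irreducible curves.

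First I would dispose of the direction $E = F \Rightarrow E.F = 0$. Here $E.F = (E.E)$, and the self-intersection of an elliptic curve sitting inside an abelian surface vanishes. This is already recorded in Proposition \ref{kani3}, whose criterion includes $(L.L) = 0$ for the class of an elliptic curve. Alternatively one sees it directly by translation: for any $a \notin E$ the translate $E + a$ is disjoint from $E$ (a point lies in both exactly when $a \in E$, since $E = -E$ is a subgroup) yet is algebraically equivalent to $E$, so $(E.E) = (E.(E+a)) = 0$.

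For the reverse implication I would argue by contraposition and establish $E \neq F \Rightarrow E.F > 0$. The crucial input is that $E$ and $F$ are \emph{subgroups}, so both pass through the origin of $A$; in particular $0 \in E \cap F$ and the two curves meet. Since $E \neq F$ and each is irreducible, they have no common component, and hence on the smooth projective surface $A$ the intersection number $E.F$ equals the sum of the strictly positive local intersection multiplicities $(E.F)_p$ over $p \in E \cap F$. As this set is nonempty, $E.F \geq 1 > 0$, giving the contrapositive and completing the proof.

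The one place the hypotheses genuinely enter — and the thing to state carefully — is the use of the subgroup condition to force a common point. Without it the statement is false: for $a \notin E$ the translate $E + a$ is a distinct elliptic curve with $(E.(E+a)) = 0$. So the lemma is really an assertion about elliptic curves through the origin, and the write-up should make explicit that ``subgroup'' is precisely the hypothesis ruling out the disjoint-translate phenomenon. I do not anticipate any serious obstacle beyond this observation, since the nonnegativity of the intersection number of distinct irreducible curves on a smooth surface is standard.
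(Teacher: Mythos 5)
Your proof is correct and complete; note that the paper itself states Lemma \ref{disjointEC} without proof, so there is no argument of the authors to compare against. Your write-up supplies exactly the standard argument the authors are implicitly invoking: $E^2 = 0$ via disjoint translates (or adjunction with $K_A = 0$), and for $E \neq F$ the subgroup hypothesis forces $0 \in E \cap F$, whence positivity of the local intersection multiplicities of distinct irreducible curves gives $E.F \geq 1$. Your closing observation that the subgroup hypothesis is essential (translates give disjoint, algebraically equivalent copies) correctly identifies why the lemma is stated for curve subgroups rather than arbitrary elliptic curves on $A$.
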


\begin{lemma}\label{unique}
Let $A$ be an abelian surface and $L \in \NS(A)$ a polarization of degree $d \geq 5$.  Then there is at most one elliptic curve $E \subset A$ such that $L.E = 2$.
\end{lemma}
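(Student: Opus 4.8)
The plan is to argue by contradiction using the Hodge index theorem. Suppose $E, F \subset A$ are two \emph{distinct} elliptic curves with $L.E = L.F = 2$. Since each is a genus-one curve on the abelian surface $A$, adjunction (using $K_A = 0$) gives $E^2 = F^2 = 0$, and Lemma \ref{disjointEC} guarantees $E.F \neq 0$; being effective curves that meet, in fact $t := E.F \geq 1$. Recording also $L^2 = 2d$, the three classes $L, E, F$ have Gram matrix
\[ G = \begin{pmatrix} 2d & 2 & 2 \\ 2 & 0 & t \\ 2 & t & 0 \end{pmatrix} \]
with respect to the intersection pairing.

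The key input is that, by the Hodge index theorem, the intersection form on $\NS(A) \otimes \R$ has signature $(1, \rho(A) - 1)$; in particular it admits no positive-definite subspace of dimension $2$. I would exploit this as follows. The plane $U = \R E + \R F$ carries the form $\bigl(\begin{smallmatrix} 0 & t \\ t & 0 \end{smallmatrix}\bigr)$, which is nondegenerate of signature $(1,1)$, so $U$ accounts for the unique positive direction and its orthogonal complement $U^\perp$ is negative semidefinite. Writing $L = L_U + L_\perp$ with $L_U \in U$ and $L_\perp \in U^\perp$, the conditions $L.E = L.F = 2$ depend only on $L_U$, and solving $L_U = \alpha E + \beta F$ yields $\alpha = \beta = 2/t$, hence $L_U^2 = 8/t$. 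Since $L_\perp^2 \leq 0$, we obtain
\[ 2d = L^2 = L_U^2 + L_\perp^2 \leq \frac{8}{t} \leq 8, \]
forcing $d \leq 4$ and contradicting $d \geq 5$.

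Equivalently, one may run the argument through the determinant: a direct expansion gives $\det G = 2t(4 - dt)$, which is strictly negative for $d \geq 5$ and $t \geq 1$. A nonsingular real symmetric $3 \times 3$ matrix of negative determinant has signature $(2,1)$ or $(0,3)$; the latter is excluded because $L^2 = 2d > 0$, while the former exhibits a positive-definite plane in $\NS(A)\otimes\R$, contradicting Hodge index. I do not expect any serious obstacle here: the proof is a direct application of the Hodge index theorem once the elementary numbers $E^2 = F^2 = 0$ and $E.F \geq 1$ are in place. The only point meriting attention is the precise role of the hypothesis $d \geq 5$, which enters exactly through the inequality $2d > 8 \geq 8/t$; this shows the bound is sharp for the method, matching the fact that uniqueness is a feature of the $(1,d)$-polarized, Reider regime rather than of small-degree polarizations.
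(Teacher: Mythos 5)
Your proof is correct and is essentially the same as the paper's: both arguments rest on the Hodge index theorem applied to $L$, $E$, $F$ with $E^2=F^2=0$ and $L.E=L.F=2$, together with Lemma \ref{disjointEC}, and your inequality $2d \leq 8/t$ is exactly the paper's bound $d(E.F) \leq 4$, which it obtains in one line from $L^2(E+F)^2 \leq (L.(E+F))^2$. Your signature/Gram-determinant packaging is merely a longer route to the identical numerical conclusion.
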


\begin{proof}
Suppose $F \subset A$ is another such elliptic curve.  By the Hodge index theorem, we have
$$4d(E.F) = L^2(E + F)^2 \leq (L.(E + F))^2 = 16,$$
so $d(E.F) \leq 4$.  This forces $(E.F) = 0$; hence $E = F$ by Lemma \ref{disjointEC}.  
\end{proof}

\begin{definition} Two polarizations $L, M \in \NS(A)$ are \textit{equivalent} if there exists $\alpha \in \Aut(A)$ such that $L = \alpha^*M$. 
\end{definition}
In the following definitions the term `polarizations' is used as an abbreviation for the phrase `equivalence classes of  polarizations'.  

\begin{definition}  For integers $d \geq 1$,
\begin{itemize}
\item $N(A,d)$ is the number of polarizations on $A$ of degree $d$.  
\item $N_\sm(A,d)$ is the number of \textit{smooth} polarizations on $A$ of degree $d$.
\item $N_\va(A,d)$ is the number of \textit{very ample} polarizations on $A$ of degree $d$.   
\end{itemize}
\end{definition}

%NON-ISOGENOUS ELLIPTIC CURVES
\section{Product abelian surfaces of Picard number 2}\label{picrank2}
For completeness, we record in this section the (straightforward) answers to all our counting questions from the introduction, when $A = E \times E'$ is a product of non-isogenous elliptic curves $E$ and $E'$.  In this case, $\NS(A) = \Z h + \Z v$, where $h$ is the class of the horizontal divisor $E \times 0$ and $v$ is the class of the vertical divisor $0 \times E'$.  If $L \equiv ah + bv$, then the degree of $L$ is $d(L)=\frac{1}{2}L.L = ab$.  

\begin{proposition}
$L \equiv ah + bv \in \NS(A)$ is ample if and only if $a$ and $b$ are both positive. 
\end{proposition}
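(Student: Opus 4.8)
The statement to prove is a clean ampleness criterion: for $A = E \times E'$ a product of \emph{non-isogenous} elliptic curves, with $\NS(A) = \Z h + \Z v$ and $L \equiv ah + bv$, the class $L$ is ample if and only if $a > 0$ and $b > 0$. The natural plan is to invoke Proposition \ref{nakai}, the Nakai--Moishezon-type criterion already recorded above, which says $L$ is ample if and only if $L.L > 0$ and $L.(h+v) > 0$. So the entire task reduces to translating these two numerical inequalities into the conditions on $a$ and $b$.

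First I would record the intersection numbers in this basis. Since $E$ and $E'$ are non-isogenous, the graphs of isogenies contribute nothing, and one has $h.h = v.v = 0$ and $h.v = 1$. From these, for $L = ah + bv$ one computes $L.L = 2ab$ and $L.(h+v) = a + b$. Plugging into Proposition \ref{nakai}, the criterion becomes: $L$ is ample if and only if $ab > 0$ and $a + b > 0$.

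Next I would check that the pair of inequalities $ab > 0$ and $a+b > 0$ is equivalent to $a > 0$ and $b > 0$. One direction is trivial: if $a,b > 0$ then both $ab > 0$ and $a + b > 0$. For the converse, $ab > 0$ forces $a$ and $b$ to have the same sign (and both nonzero); combined with $a + b > 0$, which rules out both being negative, this yields $a > 0$ and $b > 0$. This is the short elementary arithmetic step and is essentially immediate.

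In truth this proposition has no real obstacle; the only point requiring a moment of care is justifying the intersection numbers $h.v = 1$ and $h.h = v.v = 0$, which hold because $h$ and $v$ are the classes of the fibers of the two projections and $E, E'$ are non-isogenous (so $\rho(A) = 2$ and there are no graphs to intersect). Given the ingredients already assembled, namely Proposition \ref{nakai} together with these basic self-intersection and cross-intersection values, the proof is a one-line substitution followed by the elementary sign analysis above.
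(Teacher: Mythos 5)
Your proof is correct and takes the same route as the paper, which also deduces the proposition directly from Proposition \ref{nakai}; you have simply made explicit the intersection numbers $h.h = v.v = 0$, $h.v = 1$ and the elementary sign analysis that the paper leaves to the reader.
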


\begin{proof}
This follows from Proposition \ref{nakai}.
\end{proof}

Our next theorem computes $N(A,d)$, $N_\sm(A,d)$, and $N_\va(A,d)$ for $A=E\times E'$.
\begin{theorem}
\normalfont If $A$ is a product of two non-isogenous elliptic curves, then 
\begin{enumerate}
\item $N(A,d) =  \sigma_0(d)$.
\item $N_\sm(A,d) = \begin{cases} \sigma_0(d) - 2 &\mbox{if } d > 1\\ 0 &\mbox{if } d = 1.\end{cases}$  
\item $N_\va(A,d) =  \begin{cases} \sigma_0(d) - 4 &\mbox{$d \geq 5$ even}\\ \sigma_0(d) - 2 &\mbox{$d \geq 5$ odd.}\end{cases}$
\end{enumerate}
Here, $\sigma_0(d)$ is the number of divisors of $d$.  
\end{theorem}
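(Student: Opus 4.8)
The plan is to reduce all three counts to elementary divisor counting, after first checking that $\Aut(A)$ acts trivially on $\NS(A)$ so that each polarization is its own equivalence class. Since $E$ and $E'$ are non-isogenous we have $\Hom(E,E')=\Hom(E',E)=0$, whence $\End(A)=\End(E)\times\End(E')$ and $\Aut(A)=\Aut(E)\times\Aut(E')$. Any $\alpha=(\alpha_1,\alpha_2)\in\Aut(A)$ carries the subgroups $E\times 0$ and $0\times E'$ to themselves, so it fixes their classes $h$ and $v$; as $\NS(A)=\Z h+\Z v$, the action on $\NS(A)$ is trivial and it suffices to count classes outright. By Proposition \ref{nakai}, using $L.L=2ab$ and $L.(h+v)=a+b$ for $L=ah+bv$, the ample classes of degree $d$ are precisely the $ah+bv$ with $a,b>0$ and $ab=d$; these biject with the divisors $a\mid d$ (put $b=d/a$), giving $N(A,d)=\sigma_0(d)$. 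I also record, via Proposition \ref{kani3} together with $h^2=v^2=0$ and $h.v=1$, that the only elliptic-curve classes are $h$ and $v$, and that $L.h=b$ and $L.v=a$; hence every elliptic curve $F\subset A$ has $F.L\in\{a,b\}$.

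For smoothness I would apply Proposition \ref{crit}: the class $L$ is non-smooth exactly when some elliptic curve $F$ satisfies $F.L=1$, i.e.\ when $a=1$ or $b=1$. Deleting the pairs $(1,d)$ and $(d,1)$ from the $\sigma_0(d)$ ample classes then gives the formula. When $d>1$ these two classes are distinct, so $N_\sm(A,d)=\sigma_0(d)-2$; when $d=1$ the single class $(1,1)$ is itself non-smooth, so $N_\sm(A,1)=0$.

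For very ampleness I claim that, for $d\geq 5$, the class $L=ah+bv$ is very ample if and only if $a\geq 3$ and $b\geq 3$. Necessity is immediate from the elliptic curves: a very ample bundle restricts to a very ample bundle on any $F\subset A$, and a line bundle on an elliptic curve is very ample only if its degree is at least $3$; taking $F=v$ and $F=h$ forces $a=L.v\geq 3$ and $b=L.h\geq 3$. For sufficiency I would write $L$ as the external product $L=p_1^*(b\Theta_E)\otimes p_2^*(a\Theta_{E'})$, where $p_1,p_2$ are the two projections and $b\Theta_E$, $a\Theta_{E'}$ have degrees $b$ and $a$ on $E$ and $E'$; when $a,b\geq 3$ both factors are very ample, and by K\"unneth the morphism attached to $L$ is the composite of $\phi_{b\Theta_E}\times\phi_{a\Theta_{E'}}$ with the Segre embedding, hence a closed embedding. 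The non-very-ample ample classes of degree $d$ are therefore those with $a\leq 2$ or $b\leq 2$; for odd $d\geq 5$ only $(1,d)$ and $(d,1)$ occur, yielding $\sigma_0(d)-2$, while for even $d\geq 5$ one adds $(2,d/2)$ and $(d/2,2)$, and a brief check that all four pairs are distinct yields $\sigma_0(d)-4$.

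The only step requiring thought beyond bookkeeping is the sufficiency direction for very ampleness when $\gcd(a,b)>1$: Theorem \ref{reider} as stated applies only to polarizations of type $(1,d)$, whereas $L=ah+bv$ has type $(\gcd(a,b),\lcm(a,b))$, so Reider cannot be invoked directly in general. The external-product and Segre argument handles all $a,b\geq 3$ at once and sidesteps this; alternatively, in the coprime case the type is $(1,d)$ and Reider applies, the inequality $F.L\geq 3$ for every elliptic $F$ being exactly his criterion. I expect this to be the main (and essentially only) obstacle; everything else is the divisor-counting already carried out above.
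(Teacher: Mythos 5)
Your proposal is correct, and for parts (1) and (2) it is essentially the paper's argument: the paper's entire proof consists of noting that $\Aut(A)$ acts trivially on $\NS(A)$, that the only elliptic curves on $A$ are translations of $h$ and $v$, and then citing Proposition \ref{crit} and Theorem \ref{reider}; your divisor-counting with $L.h=b$, $L.v=a$ just makes that bookkeeping explicit (and your verification that $\Aut(A)$ fixes $h$ and $v$ factor-by-factor is actually more robust than the paper's bare assertion $\Aut(A)\cong\Z/2\times\Z/2$, which as stated presumes neither factor has CM, though triviality of the action holds regardless). Where you genuinely diverge is part (3): the paper invokes Theorem \ref{reider} directly, even though that theorem as stated applies only to polarizations of type $(1,d)$, whereas $L=ah+bv$ has type $(\gcd(a,b),\mathrm{lcm}(a,b))$ --- exactly the issue you flag. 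Your substitute for the sufficiency direction (external product $p_1^*(b\Theta_E)\otimes p_2^*(a\Theta_{E'})$ with $a,b\geq 3$, composed with the Segre embedding) is correct and self-contained, and your necessity argument via restriction of a very ample bundle to the elliptic subvarieties is also sound and independent of Reider. What each approach buys: the paper's route is shorter and in fact can be repaired without your detour, since the general form of Reider's criterion (which is what \cite[10.4]{BL} proves) applies to any ample $L$ with $L^2\geq 10$ and yields precisely the characterization ``no elliptic curve $F$ with $F.L\leq 2$,'' i.e.\ $a,b\geq 3$; your route avoids having to appeal to any version of Reider beyond the one literally stated, at the cost of the (standard) Segre argument. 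Either way the counts agree, and your check that the four excluded pairs $(1,d),(d,1),(2,d/2),(d/2,2)$ are distinct for even $d\geq 5$ (using $d\geq 6$, so $d/2\geq 3$) is the right fine print. No gaps.
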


\begin{proof}
Note that $\Aut(A) \cong \Z/2 \times \Z/2$ acts trivially on $\NS(A)$.    Moreover, the only elliptic curves on $A$ are translations of $h$ and $v$.  The theorem now follows immediately from Proposition \ref{crit} and Theorem \ref{reider}.     
\end{proof}

As an immediate consequence, we can determine for which $d$ there exist smooth or very ample polarizations on $A$ of degree $d$.

\begin{theorem}\label{thmPR2}
Let $d$ be a positive integer.  Then $A$ admits a smooth polarization of degree $d $ if and only if $d$ is composite. For $d\geq 5$, $A$ admits a very ample polarization of degree $d$ if and only if $d$ is neither a prime nor twice a prime.  
\end{theorem}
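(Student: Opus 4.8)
The plan is to reduce the existence questions for degree $d$ polarizations to the counting formulas already established in the preceding theorem, using the product structure $\NS(A) = \Z h + \Z v$. Since every polarization on $A$ is equivalent to one of the form $ah + bv$ with $a, b > 0$ and $ab = d$, the total number of polarizations of degree $d$ is $\sigma_0(d)$, the number of factorizations $d = ab$. Existence of a smooth (resp. very ample) polarization of degree $d$ is then simply the statement that the corresponding count $N_{\sm}(A,d)$ (resp. $N_{\va}(A,d)$) is strictly positive. So I would begin by restating the existence problem as the positivity of these explicit divisor-counting expressions.

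For the smoothness claim, I would invoke $N_{\sm}(A,d) = \sigma_0(d) - 2$ for $d > 1$ and $N_{\sm}(A,1) = 0$. The subtracted $2$ accounts precisely for the two decomposable polarizations $h + dv$ and $dh + v$, which by Proposition \ref{crit} are exactly the non-smooth ample classes (those admitting an elliptic curve $E$ with $E.L = 1$). Thus a smooth polarization of degree $d$ exists if and only if $\sigma_0(d) > 2$, i.e. $d$ has a divisor other than $1$ and $d$ itself, which is the definition of $d$ being composite. For $d = 1$ there is a single polarization, the principal one $h + v$, which is non-smooth, matching the excluded case.

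For very ampleness, I would use $N_{\va}(A,d) = \sigma_0(d) - 4$ for even $d \geq 5$ and $N_{\va}(A,d) = \sigma_0(d) - 2$ for odd $d \geq 5$. Positivity of the odd count again amounts to $d$ composite. For even $d$, positivity amounts to $\sigma_0(d) > 4$. Here the arithmetic step is to identify exactly which $d \geq 5$ satisfy $\sigma_0(d) \leq 4$: these are the primes ($\sigma_0 = 2$), the numbers $2p$ for $p$ an odd prime ($\sigma_0 = 4$), and small exceptional cases like $p^3$ or $8$. I would check that, combining the parity split, the degrees failing to admit a very ample polarization are precisely the primes and twice-primes, after verifying that no other $d \geq 5$ with $\sigma_0(d) \leq 4$ slips through (for instance confirming the behavior of $d = 8, 9, 2^k$ against the even/odd formulas). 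The main obstacle is this bookkeeping: one must carefully reconcile the even and odd formulas with the divisor-count thresholds so that the final clean statement ``neither a prime nor twice a prime'' exactly captures the vanishing locus, checking the borderline cases by hand rather than overlooking a stray small exception.
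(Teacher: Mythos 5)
Your overall route is exactly the paper's: Theorem \ref{thmPR2} is derived there as an immediate consequence of the counting formulas $N_\sm(A,d)=\sigma_0(d)-2$ (for $d>1$) and $N_\va(A,d)=\sigma_0(d)-4$ or $\sigma_0(d)-2$ according to the parity of $d$, and your treatment of the smooth case and of odd $d$ is complete and correct (for odd $d\geq 5$, ``composite'' and ``neither a prime nor twice a prime'' coincide, since an odd number is never twice a prime).

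However, the ``bookkeeping'' step you defer for even $d$ is not a formality that comes out as you assert: it genuinely fails at $d=8$. You correctly list $d=8$ among the even $d\geq 5$ with $\sigma_0(d)\leq 4$ (the full list of even exceptions is $\{2p : p \text{ odd prime}\}\cup\{8\}$), but then claim the verification confirms that the failures are \emph{precisely} the primes and twice-primes. It does not: $\sigma_0(8)=4$, so the even-degree formula gives $N_\va(A,8)=0$, and this is not an artifact of the formula --- a polarization $ah+bv$ of degree $ab=8$ always has $\min(a,b)\leq 2$, hence meets a translate of $h$ or $v$ in at most $2$ points and cannot be very ample, so no very ample polarization of degree $8$ exists on $A$. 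Since $8$ is neither a prime nor twice a prime, the clean statement you are trying to land on is contradicted at $d=8$ by the very counts you invoke; a correct conclusion must read ``$d$ is a prime, twice a prime, or $8$.'' (This is in fact an erratum in the paper's own statement, which presents the theorem as an immediate consequence of the counting formulas without noticing the $d=8$ case.) So your proposal, as written, asserts that the borderline check succeeds when an honest execution of it would instead force you to amend the statement.
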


\begin{corollary}
There exists a smooth projective curve $C \subset A$ of genus $g = d + 1 \geq 2$ if and only if $d$ is composite.  
\end{corollary}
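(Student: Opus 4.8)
The plan is to reduce the corollary to the smoothness part of Theorem \ref{thmPR2} via the standard dictionary between smooth curves of genus $g$ on an abelian surface and smooth polarizations of degree $d = g - 1$. First I would invoke adjunction: since the canonical bundle of $A$ is trivial, every smooth projective curve $C \subset A$ satisfies $2g(C) - 2 = C.C$. Hence a smooth curve of genus $g = d + 1$ has $C.C = 2d$, so its class $L = \O(C) \in \NS(A)$ has degree $d(L) = \tfrac{1}{2}(C.C) = d$. This is precisely the genus--degree relation already built into the definition of smoothness in the introduction.

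Next I would confirm that $\O(C)$ is a genuine polarization, i.e. ample, so that Theorem \ref{thmPR2} applies. This is exactly the fact used in the proof of Lemma \ref{globgen}: a smooth curve of genus $\geq 2$ on an abelian surface is automatically ample. Concretely, $C.C = 2g - 2 > 0$, and by the Hodge index theorem this forces $C.D > 0$ for every irreducible curve $D$ (an equality $C.D = 0$ would give $D.D < 0$, impossible on an abelian surface), so $C$ is ample by Nakai--Moishezon. Since $g = d + 1 \geq 2$ by hypothesis, $\O(C)$ is therefore a smooth polarization of degree $d$.

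With these two translations in hand, both directions are immediate. For the forward implication, the existence of a smooth curve $C$ of genus $g = d + 1$ produces a smooth polarization of degree $d$, so Theorem \ref{thmPR2} forces $d$ to be composite. For the converse, if $d$ is composite then Theorem \ref{thmPR2} supplies a smooth polarization $L$ of degree $d$, which by definition of smoothness is represented by $\O(C)$ for some smooth curve $C$, necessarily of genus $d + 1 = g$.

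I expect no genuine obstacle here: the entire arithmetic content is carried by Theorem \ref{thmPR2}. The only points that merit a moment of care are the two bookkeeping steps above---recording the genus--degree relation $g = d + 1$ from adjunction, and verifying that the hypothesis $g \geq 2$ (equivalently $C.C > 0$) forces $C$ to be ample so that $\O(C)$ qualifies as a polarization rather than a mere line bundle.
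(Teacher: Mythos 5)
Your proof is correct and matches the paper's (implicit) argument: the corollary is an immediate translation of Theorem \ref{thmPR2} through the genus--degree relation $g = d+1$ built into the definition of a smooth polarization, which is why the paper gives no separate proof. Your two bookkeeping steps---adjunction giving $d(\O(C)) = \tfrac{1}{2}C.C = g-1$, and the ampleness of a smooth curve of genus $\geq 2$ (the same fact the paper invokes in the proof of Lemma \ref{globgen})---are exactly the right points to verify.
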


\begin{corollary}
The surface $A$ is not the Jacobian of a smooth genus $2$ curve.
\end{corollary}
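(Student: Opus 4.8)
The plan is to reduce the statement to the degree $d = 1$ instance of the preceding theorem. Recall the standard fact, noted in the introduction, that an abelian surface is the Jacobian of a smooth genus $2$ curve precisely when it carries a smooth principal polarization, i.e.\ a smooth polarization of degree $d = 1$. This is essentially the Matsusaka--Ran dichotomy already invoked in the proof of Proposition \ref{crit}: a principal polarization $L$ on an abelian surface is either represented by a smooth irreducible genus $2$ curve $C$, in which case $A \cong \mathrm{Jac}(C)$ via Abel--Jacobi, or else $(A,L)$ splits as a product of elliptic curves carrying its canonical product polarization.

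First I would simply set $d = 1$ in Theorem \ref{thmPR2}: that theorem says $A$ admits a smooth polarization of degree $d$ if and only if $d$ is composite, and $d = 1$ is not composite. Hence $A$ has no smooth principal polarization, and by the reduction above $A$ is not a Jacobian. Equivalently, one can quote the count $N_\sm(A,1) = 0$ from the previous theorem. Alternatively, a direct argument is available: since $N(A,1) = \sigma_0(1) = 1$, the product polarization $h + v$ is the \emph{unique} principal polarization on $A$, and it is visibly reducible, being represented by the divisor $E \times 0 + 0 \times E'$; by the Matsusaka--Ran dichotomy the only alternative to $(A, h+v)$ being a product polarization would be for $h+v$ to be smooth, so $A$ cannot be a Jacobian.

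I do not anticipate any serious obstacle. The whole content is the identification of Jacobians with smooth principal polarizations, which is already furnished by Proposition \ref{crit}, combined with the $d = 1$ case of Theorem \ref{thmPR2}. The only point deserving a word of care is the convention that $d = 1$ is neither prime nor composite, so that the ``composite'' criterion of Theorem \ref{thmPR2} correctly rules it out.
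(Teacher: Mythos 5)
Your proof is correct and matches the paper's intended argument: the corollary is an immediate consequence of the $d=1$ case of Theorem \ref{thmPR2} (or equivalently $N_\sm(A,1)=0$) together with the standard fact that $A$ is a Jacobian if and only if it carries a smooth principal polarization, which the paper records in a footnote and which rests on the Matsusaka--Ran dichotomy already cited in Proposition \ref{crit}. Your alternative direct argument via $N(A,1)=\sigma_0(1)=1$ and the visible reducibility of $h+v$ is a fine equivalent phrasing of the same computation.
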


\begin{corollary}
There is no embedding $A\hookrightarrow\P^4$.  
\end{corollary}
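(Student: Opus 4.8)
The plan is to reduce the statement to the non-existence of a very ample polarization of degree $5$, and then invoke Theorem \ref{thmPR2}. The first step is the standard translation between projective embeddings and very ample line bundles: an embedding $A \hookrightarrow \P^4$ is the same datum as a very ample line bundle $L = \O_A(1)$ on $A$, obtained by pulling back $\O_{\P^4}(1)$, whose complete linear system realizes the embedding. In particular such an $L$ satisfies $h^0(A,L) \geq 5$.

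The second step pins down the degree. For an ample line bundle on an abelian surface one has $h^0(A,L) = \chi(L) = d(L)$, as recorded in the introduction, so the embedding forces $d(L) \geq 5$, with equality exactly when the image is nondegenerate (not contained in a hyperplane). If $A$ were contained in a hyperplane $\P^3 \subset \P^4$ it would embed in $\P^3$, which is impossible for an abelian surface; indeed, as noted in the remark following Theorem \ref{mainvample}, a very ample polarization must have degree $d \geq 5$. Hence we may assume the image is nondegenerate and $d(L) = 5$. This is precisely the equivalence ``$A$ embeds in $\P^4$ iff $A$ carries a very ample polarization of degree $5$'' recalled in the introduction.

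The final step is to apply Theorem \ref{thmPR2} with $d = 5$. Since $5$ is a prime, that theorem asserts that $A$ (a product of two non-isogenous elliptic curves) admits \emph{no} very ample polarization of degree $5$. This contradicts the existence of the $L$ produced above, and therefore no embedding $A \hookrightarrow \P^4$ can exist.

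There is essentially no obstacle here: the corollary is a direct consequence of Theorem \ref{thmPR2} once the embedding question is rephrased in terms of the existence of a degree-$5$ very ample polarization. The only point deserving a word of care is excluding a degenerate embedding, and this is dispatched by the classical fact that abelian surfaces do not embed in $\P^3$.
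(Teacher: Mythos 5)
Your proposal is correct and takes essentially the same route as the paper: the paper's entire proof is the one-line observation that $A$ admits no very ample line bundle of degree $5$, which is exactly your application of Theorem \ref{thmPR2} (with $d=5$ prime), combined with the Horrocks--Mumford equivalence between embeddings $A\hookrightarrow\P^4$ and very ample polarizations of degree $5$ already recalled in the introduction. Your additional care about nondegeneracy and $h^0(A,L)=d(L)$ is harmless but subsumed by that cited equivalence.
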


\begin{proof}
$A$ admits no very ample line bundles of degree 5.  
\end{proof}

%\begin{remark}\label{suff}
%Every product of two elliptic curves is \textit{isogenous} to the Jacobian of a genus two curve.  \footnote{See Zarhin's answer here: http://mathoverflow.net/questions/35060/when-is-a-product-of-elliptic-curves-isogenous-to-the-jacobian-of-a-hyperelliptic.}\footnote{This will also follow from computations in a subsequent paper.}   
%\end{remark}

%Picard number 3 
\section{The correspondence in the non-CM case}\label{param}
In this section we let $E$ and $E'$ be isogenous elliptic curves without CM and set $A = E \times E'$.  By Proposition \ref{ns}, $\rho(A) = 3$.  Let $\lambda: E \to E'$ be a cyclic isogeny satisfying $\ker \lambda \cong \Z/m\Z$ for some $m \geq 1$.  Thus $\Hom(E, E')$ is simply $\Z\lambda$.  Then $\NS(A) \cong \Z h \oplus \Z v \oplus \Hom(E,E')$, with $h$ and $v$ the horizontal and vertical classes as before.  The inverse isomorphism sends $\lambda$ to $$X_\lambda := [\Gamma_\lambda] - h - mv.$$   
The class $X_\l$ is orthogonal to $h$ and $v$ and if $L \equiv ah + bX_\l + cv$, then the degree of $L$ is  
$$d = \frac{1}{2}(L.L) = ac - b^2m.$$
\begin{remark}
One can define $X_{\hat \l} \in \NS(A)$ analogously and we have $X_{\hat \l} = X_\l$.  So this choice of basis does not favor $E$ over $E'$.     
\end{remark}

Note that the determinant of the intersection pairing on $\NS(A)$ is equal to $2m$, so the integer $m$ is an invariant of the surface and does not depend on the choice of decomposition of $A_m$ as a product of elliptic curves.  We follow Kani and make the following definition.

\begin{definition}
An abelian surface $A$ is said to be of type $m$ if there is an isomorphism $A \cong E \times E'$ where $E, E'$ are elliptic curves admitting a cyclic isogeny $E \to E'$ of degree $m$.  
\end{definition}

We denote by $A_m$ \textit{any} abelian surface $A = E \times E'$ of type $m$ and Picard number 3.  This notation is convenient because we often consider abelian surfaces of different types at the same time.  Moreover,  all surfaces $A_m$ have isomorphic N\'eron-Severi groups (as quadratic spaces), so the numbers $N_*(A_m, d)$ only depend on $m$ and $d$, as the notation suggests.  We will rarely consider two non-isomorphic surfaces of the same type at the same time, so this should not cause confusion.  

Now set $A_m = E \times E'$ as before, and let $L \equiv ah + b X_\l + cv \in \NS(A_m)$ be an arbitrary class.

\begin{lemma}\label{ample}
$L$ is ample if and only if $d > 0$ and $a,c > 0$.  
\end{lemma}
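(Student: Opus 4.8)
The plan is to apply the Nakai--Moishezon-type criterion recorded in Proposition \ref{nakai}, which says that for a product surface $A = E \times E'$ a class $L$ is ample if and only if $L.L > 0$ and $L.(h+v) > 0$. So the entire task reduces to translating these two conditions into statements about the coordinates $(a,b,c)$ of $L \equiv ah + bX_\l + cv$ and showing they are equivalent to ``$d > 0$ and $a,c > 0$.''

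First I would compute the relevant intersection numbers explicitly. Since the degree is defined by $d = \frac{1}{2}(L.L)$, the first Nakai condition $L.L > 0$ is literally the condition $d > 0$, so one of the two inequalities is free. For the second condition I would use that $X_\l$ is orthogonal to both $h$ and $v$, together with the standard intersection relations on a product surface, namely $h.h = v.v = 0$ and $h.v = 1$. These give $L.(h+v) = (ah + bX_\l + cv).(h+v) = a(h.v) + c(v.h) = a + c$. Thus the Nakai criterion becomes precisely $d > 0$ and $a + c > 0$.

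The remaining step, which is the only place requiring a small argument rather than a direct computation, is to show that ``$d > 0$ and $a + c > 0$'' is equivalent to ``$d > 0$ and $a, c > 0$.'' The nontrivial direction is that $d > 0$ together with $a + c > 0$ forces both $a > 0$ and $c > 0$. Here I would use the explicit formula $d = ac - b^2 m$. Since $m \geq 1$ and $b^2 \geq 0$, the hypothesis $d > 0$ gives $ac \geq d + b^2 m > 0$, so $ac > 0$, meaning $a$ and $c$ have the same sign and are both nonzero. Combined with $a + c > 0$, this forces both to be positive. The converse direction (that $a,c > 0$ implies $a+c>0$) is immediate.

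I do not expect any genuine obstacle in this proof; the content is entirely bundled into Proposition \ref{nakai}, and everything else is bookkeeping with the intersection pairing and the sign analysis of the quadratic form $ac - b^2 m$. If there is a subtle point worth double-checking, it is simply confirming the orthogonality of $X_\l$ to $h$ and $v$ and the normalization $h.v = 1$, so that the computation $L.(h+v) = a + c$ is correct; these facts are already established earlier in Section \ref{param}.
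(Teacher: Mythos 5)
Your proof is correct and follows exactly the paper's route: the paper's own proof of Lemma \ref{ample} consists of the single line ``This follows from Proposition \ref{nakai},'' and your write-up simply supplies the (correct) bookkeeping — $L.(h+v) = a+c$, $L.L = 2d$, and the observation that $d = ac - b^2m > 0$ forces $ac > 0$, so $a+c > 0$ pins down both signs. No gaps; the sign analysis you flag as the only nontrivial step is indeed the whole content left implicit by the paper.
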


\begin{proof}
This follows from Proposition \ref{nakai}.
\end{proof}

Associated with the class $L \equiv ah + bX_\l + cv\in\NS(A)$ is the quadratic form $$q_L : \Hom(E, A_m) \to \Z$$ $$f \mapsto \deg(f^*L).$$  Using the natural basis $\{(1,0), (0, \l)\}$ for 
$$\Hom(E,A_m) = \Hom(E, E) \oplus \Hom(E, E'),$$ 
one computes 
$$q_L([y], x\l) = amx^2 - 2bmxy + cy^2.$$  
Note that in order to define the quadratic form $q_L$, we need to choose a direct factor $E$ of $A$.  We have decided to hide this from the notation because we have fixed a decomposition $A = E \times E'$.   

%Note that the basis above is canonically (up to a choice of minimal isogeny $\l$) attached to the \textit{polarized} abelian surface $(A_m, h + v)$, i.e. it is only canonical once a decomposition $A_m \cong E \times E'$ has been chosen.    

\begin{notation}Let $V_m$ be the space of integral symmetric bilinear forms which are degenerate over $\Z/m\Z$.  Equivalently, $V_m$ is the space of quadratic forms $q = [A,2B,C]$ with $A,B \in m\Z$ and $C \in \Z$.\footnote{As usual, we write $[a,b,c]$ for the quadratic form $ax^2 + bxy + cy^2$.}  For $d \geq 1$, let $V_{m,d} \subset V_m$ be the set of positive definite $q$ of discriminant $-4md$.
\end{notation}

The set $V_m$ has a natural action of $\Gamma_0(m)$ by linear transformation of variable.  Here, $\Gamma_0(m)$ is the subgroup of $\GL_2(\Z)$ with lower left corner divisible by $m$.  This action preserves the subsets $V_{m,d}$.  We also note that $\Aut(A_m)$ is isomorphic to $\Gamma_0(m)$ via 
$$  \left( \begin{array}{cc}
a & b\hat\l \\
c\l & d \end{array}\right) \mapsto
 \left( \begin{array}{cc}
a & b \\
mc & d \end{array}\right).$$  

%In this paper we think of $q_L$ as the quadratic form $[am, -2bm, c]$; in other words, we write $q_L$ as a quadratic polynomial with respect to the canonical basis of $\Hom(E,A_m)$.  

%We could be more abstract and avoid using bases altogether, but this mostly obfuscates matters.  More to the point though, we cannot avoid choosing a product decomposition $A_m = E \times E'$, and once this decomposition is fixed, the basis above is natural.        

\begin{theorem}\label{bijnoncm}
The map $L \mapsto q_L$ is a bijection $\NS(A_m) \to V_m$ with the following properties:
\begin{enumerate}
\item $L$ is ample if and only if $q_L$ is positive definite.
\item The discriminant of $q_L$ is $-4m\deg(L)$
\item The correspondence is $\Gamma_0(m)$-equivariant 
\end{enumerate} 
In particular, for any $d \geq 1$ there is an induced bijection 
$$\Aut(A_m)\backslash \NS(A_m)_d^\amp \rightarrow \Gamma_0(m) \backslash  V_{m,d},$$
where $\NS(A_m)_d^\amp$ is the set of ample classes of degree $d$ in $\NS(A_m)$.  
\end{theorem}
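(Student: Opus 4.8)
The plan is to verify the four claims in sequence, exploiting the explicit formula $q_L([y],x\lambda) = amx^2 - 2bmxy + cy^2$ already computed just before the statement. First I would establish that the association $L \mapsto q_L$ lands in $V_m$ and is a bijection onto it. Writing $L \equiv ah + bX_\lambda + cv$, the displayed formula shows $q_L = [am, -2bm, c]$, so its outer coefficient $am$ and half its middle coefficient $-bm$ are divisible by $m$, while the inner coefficient $c$ is an arbitrary integer; this is exactly the defining condition for membership in $V_m$. Conversely, any form $[A, 2B, C] \in V_m$ with $A, B \in m\Z$ arises from a unique triple $(a,b,c) = (A/m, -B/m, C)$, and by Proposition \ref{ns} every such triple corresponds to a unique class in $\NS(A_m)$. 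Thus $L \mapsto q_L$ is a bijection $\NS(A_m) \to V_m$.

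Next I would check properties (1) and (2), which are essentially bookkeeping. For (2), the discriminant of $[am,-2bm,c]$ is $(2bm)^2 - 4(am)(c) = -4m(ac - b^2m) = -4m\deg(L)$, using the degree formula $d = ac - b^2 m$ recorded earlier. For (1), Lemma \ref{ample} says $L$ is ample iff $d > 0$ and $a, c > 0$. Since $\deg(L) > 0$ is equivalent to negative discriminant, I must translate "$a,c > 0$" into "positive (rather than negative) definite." Given negative discriminant, the form $[am,-2bm,c]$ is definite, and it is positive definite precisely when its leading coefficient $am$ is positive, i.e. when $a > 0$; combined with $d = ac - b^2m > 0$ this forces $c > 0$ as well (and conversely). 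So ampleness matches positive-definiteness, giving (1).

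The substantive step is (3), the $\Gamma_0(m)$-equivariance, and this is where I expect the main work to lie. Here I must compare two a priori different actions: the action of $\Aut(A_m) \cong \Gamma_0(m)$ on $\NS(A_m)$ by pullback $L \mapsto \alpha^* L$, and the action of $\Gamma_0(m)$ on $V_m$ by change of variable. The key is that for $\alpha \in \Aut(A_m)$ one has $q_{\alpha^*L}(f) = \deg(f^*\alpha^*L) = \deg((\alpha f)^*L) = q_L(\alpha \circ f)$, so pullback on line bundles becomes precompositon by $\alpha$ on the quadratic form, which is exactly a linear change of variable on $\Hom(E, A_m)$. What remains is the bookkeeping check that, under the explicit isomorphism $\Aut(A_m) \cong \Gamma_0(m)$ displayed before the statement and the chosen basis $\{(1,0),(0,\lambda)\}$ of $\Hom(E,A_m)$, this induced change of variable is precisely the standard $\Gamma_0(m)$-action on $V_m$. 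This is the delicate part: I would compute how a matrix $\left(\begin{smallmatrix} a & b\hat\lambda \\ c\lambda & d\end{smallmatrix}\right)$ acts on $(1,0)$ and $(0,\lambda)$, using $\hat\lambda\lambda = [m]$, and confirm that the resulting matrix on coordinates is the image $\left(\begin{smallmatrix} a & b \\ mc & d\end{smallmatrix}\right) \in \Gamma_0(m)$ (possibly up to transpose or inverse, a convention I would pin down carefully).

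Finally, the induced bijection on quotients is formal: properties (1) and (3) show that $L \mapsto q_L$ restricts to a $\Gamma_0(m)$-equivariant bijection from the ample classes of degree $d$ onto $V_{m,d}$ (property (2) fixing the discriminant to $-4md$), so passing to orbit spaces yields the claimed bijection $\Aut(A_m)\backslash \NS(A_m)_d^{\amp} \to \Gamma_0(m)\backslash V_{m,d}$.
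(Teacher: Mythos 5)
Your proposal is correct and follows essentially the same route as the paper, whose proof simply notes that the bijection is immediate from the explicit formula $q_L = [am,-2bm,c]$, that (1) follows from Lemma \ref{ample}, and that (2) and (3) are direct computations. You have merely spelled out those computations—including the equivariance identity $q_{\alpha^*L}(f) = q_L(\alpha\circ f)$ and the matrix bookkeeping under $\Aut(A_m)\cong\Gamma_0(m)$—in more detail than the paper does.
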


\begin{proof}
The fact that $L \mapsto q_L$ is a bijection follows immediately from the explicit formula for $q_L$.  Property (1) follows from Lemma \ref{ample}; (2) and (3) are simple computations.  
\end{proof}

The set $\Gamma_0(m) \backslash V_{m,d}$ can be understood in terms of the more familiar $\GL_2(\Z)$-equivalence relation on quadratic forms.  Before explaining this, we first describe the elliptic curves on $A_m$ and determine the different decompositions of $A_m$ as a product of elliptic curves.  Such decompositions are in bijection with reducible principal polarizations, i.e. polarizations of the form $F + F'$ for two elliptic curves $F, F' \in \NS(A_m)$ such that $F.F' = 1$.  

For each $k$ dividing $m$, let $H_k$ be the unique subgroup of $\ker (\l : E \to E')$ of order $k$ and set $E_k = E/H_k$.

\begin{proposition}\label{redpp}
Any elliptic curve on $A_m$ is isomorphic to $E_k$ for some $k$ dividing $m$.  For each divisor $k$ of $m$ satisfying $(k,m/k) = 1$, there is an isomorphism $A_m \cong E_k \times E_{m/k}$, giving a reducible principal polarization on $A$.  Up to automorphisms of $A_m$, these are the only reducible principal polarizations. 
\end{proposition}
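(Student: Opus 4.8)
The plan is to prove the three assertions in order, using Proposition \ref{kani3} to convert ``elliptic curve'' into numerical conditions on $\NS(A_m)$ and then reading off the isomorphism type from the two projections. Throughout I write a general elliptic curve class as $[C]=ah+bX_\lambda+cv$.

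For the first assertion, let $C\subset A_m$ be an elliptic curve. By Proposition \ref{kani3} its class is indivisible and satisfies $[C]^2=0$, i.e. $ac=mb^2$; since $ac=mb^2\ge 0$ and positivity against the ample class $h+v$ gives $a+c>0$, we get $a,c\ge 0$. If $b=0$ then $[C]\in\{h,v\}$ and $C\cong E=E_1$ or $C\cong E'=E_m$. If $b\ne 0$, the projections $p\colon C\to E$ and $q\colon C\to E'$ are isogenies whose degrees are the intersection numbers $C.v=a$ and $C.h=c$. Composing $q$ with the dual isogeny $\hat p\colon E\to C$ and using $\Hom(E,E')=\Z\lambda$ forces $q\hat p=\pm b\lambda$, so $C$ is the image of the map $E\to E\times E'$, $y\mapsto(ay,\pm b\lambda(y))$, whence $C\cong E/K$ with $K=E[a]\cap\lambda^{-1}(E'[b])$. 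A prime-by-prime analysis of $K$ (putting each local factor in elementary-divisor form and using $E/E[n]\cong E$) shows $E/K\cong E/H$ for a cyclic $H\subseteq\ker\lambda$; as $|H|\mid m$ this gives $C\cong E_{|H|}$.

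For the second assertion, fix $k\mid m$ with $(k,m/k)=1$ and put $\ell=m/k$. Choose $u,w\in\Z$ with $ku-\ell w=1$, and set
\[ F=ku^2\,h+u\,X_\lambda+\ell\,v,\qquad F'=\ell w^2\,h+w\,X_\lambda+k\,v. \]
Each is isotropic, indivisible and positive (note $(u,\ell)=(w,k)=1$), hence by Proposition \ref{kani3} each is an elliptic curve class, and the computation above identifies $F\cong E_k$ and $F'\cong E_\ell$. A direct intersection calculation gives $F.F'=(ku-\ell w)^2=1$, so $D=F+F'$ is a principal polarization realizing $F,F'$ as complementary elliptic curves; by the theory of complementary abelian subvarieties \cite{BL} this yields an isomorphism $A_m\cong E_k\times E_\ell$ carrying $D$ to the product principal polarization.

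For the converse, a reducible principal polarization is the same as a non-smooth one, so by Proposition \ref{crit} every such class has the form $D=F+F'$ with $F,F'$ elliptic and $F.F'=1$. Applying the first paragraph, $F\cong E_j$ and $F'\cong E_{j'}$; the resulting isomorphism $A_m\cong E_j\times E_{j'}$ forces this product to be of type $m$ (the determinant of the intersection form is the invariant $2m$), and combined with $F.F'=1$ this pins $(j,j')$ down to a coprime factorization $jj'=m$. It remains to show that two reducible principal polarizations attached to the same factorization are $\Aut(A_m)=\Gamma_0(m)$-equivalent, while distinct factorizations give inequivalent ones. I expect this final bookkeeping to be the main obstacle. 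The cleanest route is to run it through Theorem \ref{bijnoncm}, identifying reducible principal polarizations up to $\Aut(A_m)$ with $\Gamma_0(m)$-orbits of decomposable (equivalently ambiguous) binary forms of discriminant $-4m$, and then using genus theory to match these orbits bijectively with the coprime factorizations of $m$; the delicate points are comparing $\Gamma_0(m)$-equivalence with $\GL_2(\Z)$-equivalence and ruling out accidental coincidences among the orbits.
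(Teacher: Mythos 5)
Your treatment of the first two assertions is correct, and it takes a genuinely different route from the paper's. For the first assertion, the paper simply asserts that any elliptic curve on $A_m$ is the image of a cyclic isogeny $P \mapsto (xP, y\lambda(P))$ and identifies its kernel as $H_k$ with $k = \gcd(x,m)$; you instead reconstruct the map from the class via Proposition \ref{kani3} and the dual of the projection, and your prime-by-prime reduction of $K = E[a] \cap \lambda^{-1}(E'[b])$ to a cyclic subgroup of $\ker\lambda$ is only sketched but does check out (at primes where the class is locally of the form $[\ell^{\mu+2\beta}, \ell^\beta, 1]$, quotienting $K_\ell \cong \Z/\ell^{\mu+\beta} \times \Z/\ell^\beta$ by $E[\ell^\beta]$ lands exactly on the local part of $\ker\lambda$). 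For the second assertion, the paper writes down an explicit isomorphism $\phi: E \times E' \to E_k \times E_{k'}$ using $rk - sk' = 1$ and checks $\ker\phi = 0$ by hand; you instead produce the isotropic classes $F, F'$ with $F.F' = (ku - \ell w)^2 = 1$ and invoke Proposition \ref{crit} (or complementary subvarieties) to split $A_m$. The same B\'ezout identity drives both arguments; your version has the advantage of exhibiting the N\'eron--Severi classes of the factors explicitly, while the paper's stays at the level of isogenies. Your identification $F \cong E_k$ does need one more line than you give (from the projection degrees one gets $F \cong E_{kn'^2}$ with $n' \mid u$ and $n'^2 \mid \ell$, and $(u,\ell)=1$ forces $n'=1$), but this is routine.

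The genuine gap is your final step, and you have misdiagnosed its difficulty: no genus theory, and no comparison of $\Gamma_0(m)$- with $\GL_2(\Z)$-equivalence, is needed. Proposition \ref{crit} with $d_2 = 1$ (Matsusaka--Ran) gives an isomorphism of \emph{polarized} surfaces $(A_m, D) \cong (F \times F', h+v) \cong (E_j \times E_{j'}, h+v)$, so if $D'$ is a second reducible principal polarization attached to the same coprime factorization, composing the two polarized isomorphisms yields $\alpha \in \Aut(A_m)$ with $\alpha^* D' = D$. That one line is the entire remaining bookkeeping for the statement as written, which only asserts that the constructed polarizations exhaust the reducible ones up to $\Aut(A_m)$; pairwise inequivalence of distinct factorizations is not claimed (and in any case is immediate in the non-CM setting, since $E_j \cong E_{j''}$ if and only if $j = j''$, the minimal isogeny degree $jj''/(j,j'')^2$ being an invariant). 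The paper's own proof likewise leaves this equivalence implicit and concentrates on the point you did prove correctly: the invariance of $m$ forces $|v_p(j) - v_p(j')| = v_p(m)$ at every prime, i.e.\ $(j,j')=1$ and $jj' = m$. Your proposed quadratic-forms detour through Theorem \ref{bijnoncm} could be made to work without circularity, but it would amount to re-deriving what Proposition \ref{2t} and Corollary \ref{nonsm} later obtain from the Atkin--Lehner machinery --- machinery whose very construction rests on the decompositions furnished by this proposition, which is a good reason to prefer the direct argument here.
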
  
\begin{proof}
Any elliptic curve on $A_m$ is isogenous to $E$.  It is therefore the image of a cyclic isogeny $f_{x,y}: E \to A_m = E \times E'$, where $x$ and $y$ are integers and where 
$$f_{x,y}(P) = (x(P), y\l(P)).$$  The kernel of $f_{x,y}$ is $H_k$, where $k = \gcd(x,m)$.  The proves the first part of the proposition.  

For the second part, note that $E_1 = E$ and $E_m = E'$.  The natural projections $E_{k_1} \to E_{k_2}$ for $k_1 | k_2$ form a lattice of isogenies corresponding to the lattice of divisors of $m$.  As the number $m$ is an invariant of the abelian surface, the only possible way to write $A_m \cong E_k \times E_j$ for some $j$ dividing $m$ is if $j = k' := m/k$ and $(k, k') = 1$.  For if $j \neq k'$ or $(k, k') > 1$, one checks (using dual isogenies if necessary) that $E_k$ and $E_j$ are connected by an isogeny of degree less than $m$.  

The last thing to check is that $A_m$ really is isomorphic to the product of $E_k \times E_{k'}$ when $(k,k') = 1$.  One can write down an isomorphism explicitly as follows.  Let $\l_k : E \to E_k = E/H_k$ be the natural projection, and let $\mu_k : E_k \to E'$ be the unique isogeny such that $\l = \mu_k \circ \l_k$.

Now let $r$ and $s$ be integers such that $rk - sk' = 1$ and consider the map $\phi: E \times E' \to E_k \times E_{k'}$ defined by
$$\phi: (P,Q) \mapsto (r\l_k(P) - \hat\mu_k(Q), \hat \mu_{k'}(Q) - s\l_{k'} (P)).$$
One checks that if $(P,Q) \in \ker\phi$, then $rkP = \hat\l(Q) = sk'P$.  As $rk-sk' = 1$, $P$ must be 0.  But then $Q$ is in the kernel of both $\hat\mu_k$ and $\hat\mu_{k'}$, and is therefore also 0.  So $\phi$ is an isomorphism.    
\end{proof}

Now we explain the connection with the more familiar class groups, but first some terminology.

\begin{definition}
An integral binary quadratic form $(A,2B,C)$ is \textit{matrix-primitive} if $\gcd(A,B,C) = 1$, i.e. if the corresponding symmetric bilinear form is primitive.  
\end{definition}
\begin{notation}
For any integer $D$, let $V^\mprim_{4D}$ be the space of matrix-primitive quadratic forms of discriminant $4D$.    
\end{notation}

\begin{remark}
If $D$ is odd, then we think of $V^\mprim_{4D} = V^\prim_{4D} \cup V^\prim_D$ as the union of the primitive quadratic forms of discriminant $4D$ and (twice) the primitive quadratic forms of discriminant $D$.  Note that the second set is empty if $D \equiv 3$ (mod 4).  If $D$ is even, then $V^\mprim_{4D} = V^\prim_{4D}$.     
\end{remark}

To prove the main theorems, we will carefully analyze the set $\Aut(A_m)\backslash \NS(A_m)_d^\amp$ for \textit{squarefree} values of $d$.  If $L \equiv ah + bX_\l + cv \in \NS(A_m)^\amp_d$, and $d = \deg(L) = ac - b^2m$ is squarefree, then $L$ is indivisible in $\NS(A)$, i.e. $\gcd(a,b,c) = 1$.  Notice that even though $L$ is indivisible in $\NS(A)$, $q_L$ may not be matrix-primitive.  But the matrix-content of $q_L$, that is $\gcd(am, bm, c) = \gcd(c,m)$, is a divisor of $\gcd(m,d)$.  Thus $\frac{1}{(c,m)}q_L$ is in $V^\mprim_{-4md/g^2}$.
\begin{proposition}\label{formsmap}
Let $d\geq 1$ be squarefree.  Then the map $L \mapsto \frac{1}{(c,m)}q_L$ induces a surjective map $$\Psi_{m,d}:  \Aut(A_m) \backslash \NS(A_m)_d^\amp \rightarrow \coprod_{g | (m,d)} \GL_2(\Z) \backslash V^\mprim_{-4md/g^2}.$$
The fiber above an element $[q] \in \GL_2(\Z) \backslash V^\mprim_{-4md/g^2}$ has size $|\Gamma_0(m)\backslash \Gamma_0(m/g)/\Aut(q)|.$

\noindent In particular, if $m$ and $d$ are coprime, then $\Psi_{m,d}: L \mapsto [q_L]$ is a bijection 
$$\Aut(A_m)\backslash \NS(A_m)^\amp_d \to \GL_2(\Z)\backslash V_{-4md}^\mprim.$$ 
\end{proposition}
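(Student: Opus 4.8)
The statement to prove is Proposition~\ref{formsmap}: for squarefree $d \geq 1$, the map $L \mapsto \frac{1}{(c,m)}q_L$ induces a surjection
\[
\Psi_{m,d}: \Aut(A_m)\backslash \NS(A_m)_d^\amp \to \coprod_{g\mid(m,d)} \GL_2(\Z)\backslash V^\mprim_{-4md/g^2},
\]
with each fiber over $[q]$ of size $|\Gamma_0(m)\backslash\Gamma_0(m/g)/\Aut(q)|$; and in the coprime case it is a bijection onto $\GL_2(\Z)\backslash V^\mprim_{-4md}$.

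=== PROOF PROPOSAL ===

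\textbf{Proof proposal.}
The plan is to leverage Theorem~\ref{bijnoncm}, which already gives a $\Gamma_0(m)$-equivariant bijection $\NS(A_m)^\amp_d \to V_{m,d}$, $L \mapsto q_L$, and descends to a bijection $\Aut(A_m)\backslash\NS(A_m)^\amp_d \to \Gamma_0(m)\backslash V_{m,d}$. So the entire content of the proposition is a purely form-theoretic statement: I must understand the $\Gamma_0(m)$-orbits on $V_{m,d}$ (positive definite forms $[A,2B,C]$ with $A,B\in m\Z$, discriminant $-4md$) in terms of the coarser $\GL_2(\Z)$-orbits on matrix-primitive forms of the various discriminants $-4md/g^2$. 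First I would fix $L \equiv ah + bX_\l + cv$ with $d = ac - b^2 m$ squarefree, so $\gcd(a,b,c)=1$, and record (as the excerpt already does) that the matrix-content of $q_L = [am, -2bm, c]$ equals $\gcd(c,m)$, a divisor $g$ of $\gcd(m,d)$; dividing by $g$ lands $\frac{1}{g}q_L$ in $V^\mprim_{-4md/g^2}$. This defines the map to the coproduct and shows the index $g$ is well-defined on $\Gamma_0(m)$-orbits (the matrix-content is a $\GL_2(\Z)$-invariant, hence a fortiori $\Gamma_0(m)$-invariant).

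Next I would establish \emph{surjectivity}. Given $g\mid(m,d)$ and a matrix-primitive form $q'$ of discriminant $-4md/g^2$, I need to produce an ample class of degree $d$ whose associated form, after dividing by its content, is $\GL_2(\Z)$-equivalent to $q'$. The idea is to multiply $q'$ through by $g$ to get a form of discriminant $-4md$ and content $g$, then exhibit a $\GL_2(\Z)$-representative lying in $V_{m,d}$, i.e.\ with leading and middle coefficients divisible by $m$; solvability of the relevant congruences (using that the form represents integers coprime to $m$, or via the theory of primitive representations) gives such a representative, and Theorem~\ref{bijnoncm} converts it back into the desired $L$. This is where the arithmetic of which forms sit in $V_m$ must be checked carefully.

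The \textbf{main obstacle}, and the heart of the proof, is the \emph{fiber count}. The subtlety is that two $\GL_2(\Z)$-inequivalent operations are being compared: the source is indexed by $\Gamma_0(m)$-orbits, the target by $\GL_2(\Z)$-orbits of primitive forms of \emph{smaller} discriminant. I would compute the fiber of $\Psi_{m,d}$ over $[q']$ as follows. The preimage consists of $\Gamma_0(m)$-orbits of forms $q\in V_{m,d}$ with $\frac1g q$ being $\GL_2(\Z)$-equivalent to $q'$; equivalently, forms $\GL_2(\Z)$-equivalent to $g\,q'$ that happen to lie in $V_{m,d}$. The group $\Gamma_0(m/g)$ acts transitively (modulo $\Aut$) on the relevant representatives inside a single $\GL_2(\Z)$-class once one accounts for the level structure, so the fiber is a double-coset space $\Gamma_0(m)\backslash\Gamma_0(m/g)/\Aut(q)$; making this precise requires tracking how the stabilizer $\Aut(q')\subset\GL_2(\Z)$ of the primitive form interacts with the containment $\Gamma_0(m)\subset\Gamma_0(m/g)$, and verifying the action of $\Gamma_0(m/g)$ on the finite set of $V_{m,d}$-representatives within the $\GL_2(\Z)$-orbit is the natural one. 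The coprime case is the clean corollary: when $(m,d)=1$ the only $g$ is $g=1$, every $q_L$ is already matrix-primitive, $\Gamma_0(m/g)=\Gamma_0(1)=\GL_2(\Z)$, so the double coset $\Gamma_0(m)\backslash\GL_2(\Z)/\Aut(q)$ should collapse to a single point once the fiber-size formula is in hand, giving the asserted bijection $\Psi_{m,d}:[L]\mapsto[q_L]$ onto $\GL_2(\Z)\backslash V^\mprim_{-4md}$.
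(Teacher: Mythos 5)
Your route is essentially the paper's: reduce via Theorem \ref{bijnoncm} to a statement about $\Gamma_0(m)$-orbits on $V_{m,d}$, prove surjectivity by exhibiting a $\GL_2(\Z)$-representative of each class inside $V_{m,d}$, and count fibers via the key fact that $\GL_2(\Z)$-equivalent forms in $V_{m,d}$ are already $\Gamma_0(m/g)$-equivalent, so the fiber becomes the double coset space $\Gamma_0(m)\backslash\Gamma_0(m/g)/\Aut(q)$. (The paper also leaves that transitivity fact as ``a direct computation,'' so deferring it is at the paper's level of detail.) However, your coprime-case deduction contains a genuine error which, taken at face value, would refute the bijection you are asserting. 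When $(m,d)=1$ the only index is $g=1$, so $m/g=m$ and the fiber is $|\Gamma_0(m)\backslash\Gamma_0(m)/\Aut(q)|=1$: the double coset space is trivially a point, and bijectivity follows immediately. You instead wrote $\Gamma_0(m/g)=\Gamma_0(1)=\GL_2(\Z)$ (substituting $g$ for $m/g$) and then claimed the double coset $\Gamma_0(m)\backslash\GL_2(\Z)/\Aut(q)$ ``should collapse to a single point.'' It does not: since $\Aut(q)$ acts through a quotient of size at most $2$ for generic $q$, that space has size on the order of the index $[\GL_2(\Z):\Gamma_0(m)] = m\prod_{p\mid m}(1+p^{-1})$, which exceeds $1$ for every $m>1$. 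As written, your fiber count would contradict the proposition rather than prove it; the fix is simply to evaluate $m/g$ correctly at $g=1$.

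There is also a gap in your surjectivity step. You ask for a representative ``with leading and middle coefficients divisible by $m$,'' but membership in $V_{m,d}$ requires the middle coefficient divisible by $2m$ (the forms are $[A,2B,C]$ with $A,B\in m\Z$). For odd $m$ this is automatic once one moves the double root of $q$ over $\Z/m\Z$ to $0$, since the middle coefficient is even. For even $m$ it is exactly the delicate point: the paper observes that when $4\mid m$ one may get a form whose middle coefficient is divisible by $m$ but not $2m$, and repairs it with the explicit shear $\gamma=\left(\begin{smallmatrix}1 & m/2\\ 0 & 1\end{smallmatrix}\right)$, using that $q$ is automatically primitive when $m$ is even. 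Your appeal to ``solvability of the relevant congruences'' does not engage with this $2$-adic obstruction, and a naive congruence argument is precisely what fails there; you flag that care is needed, but the flag is not a proof. Finally, note that the general case $g\mid(m,d)$ of surjectivity reduces to the case $g=1$ by multiplying a matrix-primitive form in $V_{m/g,\,d/g}$ by $g$, as in the paper, rather than by working directly with content-$g$ forms of discriminant $-4md$ as your sketch suggests.
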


\begin{remark}
Unless $q$ is 2-torsion in the class group, we have $\Aut(q) = \{\pm 1\}$, which acts trivially.  In this case, the size of the fibers of $\Psi_{m,d}$ is simply the index  of $\Gamma_0(m)$ in $\Gamma_0(g')$.  If $q$ is 2-torsion, $\Aut(q)/\{\pm1\}$ has size 2 whenever $q$ has discriminant $D < -4$.  
\end{remark}

\begin{proof}
It is not hard to see that $\Psi_{m,d}$ is surjective.  For instance, if $g= 1$, then given a matrix-primitive quadratic form $q$ of discriminant $-4md$, one can find a $\GL_2(\Z)$-equivalent form in $V_{m,d}$ by moving the double root of $q$ over $\Z/m\Z$ to 0, at least if $m$ is odd.  If $m$ is even then a little more care is required.  First recall that in this case, $q$ is automatically primitive. We can find an equivalent form with middle and outer coefficient divisible by $m$, but we need the middle coefficient divisible by $2m$.  This is automatic if $m$ is not divisible by 4.  If $m$ \textit{is} divisible by 4, and if $q$ has middle coefficient only divisible by $m$ and not $2m$, then $\gamma \cdot q  \in V_{m,d}$, where 
$$\gamma = \left( \begin{array}{cc}
1 & \frac{m}{2} \\
0 & 1 \end{array}\right).$$
This approach works for all $g | (d,m)$ by simply multiplying the corresponding matrix-primitive form in $V_{m/g,d/g}$ by $g$.    

To count preimages, one checks by a direct computation that if two forms in $V_{m,d}$ are $\GL_2(\Z)$ equivalent, then they are in fact $\Gamma_0(g')$-equivalent, where $g' = m/(c,m)$.  Now, the $\Gamma_0(m)$-equivalence classes which map to $q$ under $\Psi_{m,d}$ are indexed by the cosets of $\Gamma_0(m)$ in $\Gamma_0(g')$.  And one checks that two $\Gamma_0(m)$-classes collapse if and only if the corresponding cosets are in the same orbit of the automorphism group $\Aut(q) \subset \Gamma_0(g')$ of $q \in V_{m,d}$ acting on the coset space $\Gamma_0(m) \backslash \Gamma_0(g')$ on the right.         
\end{proof}

\begin{remark}
Here is a more geometric way to count the preimages of the map $\Psi_{m,d}$ above points where $g > 1$, which will be useful for us later on.  If $L \equiv ah + bX_\l + cv$ and $(c,m) = g$, then we consider the $g$-isogeny
$$f: A_m = E \times E' \stackrel{\l_g \times \id}\longrightarrow E_g \times E' =: A_{m/g},$$ where $E_g$ is the elliptic curve from Proposition \ref{redpp} and $\l_g : E \to E_g$ is the natural isogeny.  We choose the usual basis $\{h, X_{\mu_g}, v\}$ for $\NS(A_{m/g})$, where $\mu_g : E_g \to E'$ satisfies $\l = \mu_g \circ \l_g$.  One computes
$$f^*h = h, \hspace{4mm} f^*X_{\mu_g} = X_\l, \hspace{5mm} f^*v = gv.$$  Thus $L = f^*M$ for a polarization $M \in \NS(A_{m/g})$ of degree $d/g$.  In this way, you reduce to the case where $(d,m) = 1$.  But note that if $M, M' \in \NS(A_{m/g})$ are $\Aut(A_{m/g})$-equivalent, it is not in general true that $f^*M$ and $f^*M'$ are $\Aut(A_m)$-equivalent.  As $\Aut(A_{m/g}) = \Gamma_0(m/g)$, the discrepancy is measured exactly by the coset space $\Gamma_0(m)\backslash \Gamma_0(m/g)/\Aut(M)$.
\end{remark}

\bigskip
For squarefree $d$, Proposition \ref{formsmap} gives a formula for $N(A_m,d)$ in terms of class numbers of imaginary quadratic orders.  Recall that the $\GL_2(\Z)$-equivalence classes of primitive forms of discriminant $D< 0$ are in bijection not with the group $\Pic(\O_D)$ but with the set $\Cl(D)$ in which ideal classes and their inverses are identified.  Here, $\Pic(\O_D)$ is the class group of the quadratic order of discriminant $D$.  We write $h^+(D) = \#\Cl(D)$ and record the following formula for $N(A_m,d)$ in the case where $(m,d) = 1$.

\begin{corollary}\label{polcount}
If $(d,m) = 1$ and $d$ is squarefree, then 
$$N(A_m,d) = \begin{cases}
h^+(-4md) & md \mbox{ even}\\
h^+(-4md) + h^+(-md) & md \mbox{ odd}.
\end{cases}$$
Note that $h(-md) = 0$ when $md \equiv 1$ (mod 4).
\end{corollary}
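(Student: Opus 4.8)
Corollary \ref{polcount} counts the ample classes of degree $d$ up to $\Aut(A_m)$-equivalence, under the hypotheses that $(d,m)=1$ and $d$ is squarefree. The plan is to feed these hypotheses into the ``$g=1$'' case of Proposition \ref{formsmap}, which already gives a bijection
\[
\Aut(A_m)\backslash \NS(A_m)^\amp_d \xrightarrow{\ \sim\ } \GL_2(\Z)\backslash V^\mprim_{-4md},
\]
so that $N(A_m,d)$ equals the number of $\GL_2(\Z)$-equivalence classes of matrix-primitive forms of discriminant $-4md$. The entire corollary is therefore a matter of translating the right-hand side into class numbers $h^+$, using the \emph{matrix}-primitivity convention and the dictionary already recorded in the Remark following the definition of $V^\mprim_{4D}$.

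First I would split into the two parity cases for $md$. When $md$ is even, $-4md\equiv 0\pmod 8$ is a discriminant for which matrix-primitivity coincides with ordinary primitivity (this is exactly the statement ``if $D$ is even, then $V^\mprim_{4D}=V^\prim_{4D}$'' from the Remark, applied with $4D=-4md$, i.e.\ $D=-md$). Hence $\GL_2(\Z)\backslash V^\mprim_{-4md}$ is just $\Cl(-4md)$, whose cardinality is $h^+(-4md)$ by definition. When $md$ is odd, I invoke the other half of the Remark: for odd $D$ one has $V^\mprim_{4D}=V^\prim_{4D}\sqcup V^\prim_{D}$, where the second summand is the (doubled) primitive forms of the \emph{fundamental-type} discriminant $D=-md$. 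Counting $\GL_2(\Z)$-classes on each piece gives $h^+(-4md)+h^+(-md)$, which is the claimed odd-case value.

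The last line of the corollary — that $h^+(-md)=0$ when $md\equiv 1\pmod 4$ — is the cleanest point: with $md$ odd, $-md\equiv 3\pmod 4$ precisely when $md\equiv 1\pmod 4$, and $-md\equiv 3\pmod 4$ is not $\equiv 0,1\pmod 4$, so it is not a discriminant at all. Thus there are no primitive forms of discriminant $-md$, the set $V^\prim_{-md}$ is empty, and the extra term vanishes; this is consistent with the Remark's parenthetical that ``the second set is empty if $D\equiv 3\pmod 4$.'' I would phrase this so that the two displayed cases of the corollary read uniformly as $h^+(-4md)+h^+(-md)$ for $md$ odd, with the understanding that the second term is simply $0$ when $md\equiv 1\pmod 4$.

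The only genuine obstacle I anticipate is making the matrix-primitivity bookkeeping airtight, i.e.\ verifying carefully that the map $V^\prim_D \to V^\mprim_{4D}$, $q\mapsto 2q$, really does land in the matrix-primitive forms of discriminant $4D$ and accounts for exactly the non-primitive (in the classical sense) part of $V^\mprim_{4D}$ when $D$ is odd, and that this correspondence is $\GL_2(\Z)$-equivariant so that it descends to equivalence classes. This is the step where the distinction between primitive and matrix-primitive genuinely matters; once it is checked, the rest is immediate from Proposition \ref{formsmap} and the definition $h^+(D)=\#\Cl(D)$. I expect no analytic input is needed — the proof is purely a reconciliation of conventions, so it should be short.
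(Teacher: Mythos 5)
Your proposal is correct and follows the paper's own route exactly: the corollary is deduced from the $(m,d)=1$ case of Proposition \ref{formsmap} (so $g=1$ and $\Psi_{m,d}$ is a bijection onto $\GL_2(\Z)\backslash V^\mprim_{-4md}$), combined with the dictionary $V^\mprim_{4D}=V^\prim_{4D}\cup V^\prim_D$ for odd $D$ (and $V^\mprim_{4D}=V^\prim_{4D}$ for even $D$) from the Remark, together with $h^+(D)=\#\Cl(D)$ counting $\GL_2(\Z)$-classes of primitive forms. Your handling of the vanishing term via $-md\equiv 3\pmod 4$ not being a discriminant is also exactly the paper's parenthetical observation, so there is nothing to add.
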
       

The bijection in Theorem \ref{bijnoncm} also gives a nice description of the $\Aut(A_m)$-equivalence classes of elliptic curves on $A_m$:

\begin{proposition}\label{ECs}
The map $L \mapsto q_L$ induces a bijection between the set of $\Aut(A_m)$-equivalence classes of elliptic curves on $A_m$ and the set of $\Gamma_0(m)$-equivalence classes of primitive integral binary quadratic forms of discriminant $0$.  The latter set is in bijection with the orbits of $\Gamma_0(m)$ acting on $\P^1(\Q)$.  If $k | m$, then the number of $\Aut(A_m)$-equivalence classes of elliptic curves isomorphic to $E_k = E/H_k$ is equal to the size of the group $(\Z/f_k\Z)^\times/\{\pm1\}$, where $f_k = \gcd(k,m/k)$.     
\end{proposition}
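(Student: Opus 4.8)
The plan is to transport the problem through the $\Gamma_0(m)$-equivariant bijection $L \mapsto q_L$ of Theorem \ref{bijnoncm} and then reduce the counting to a classical computation of cusps. First I would characterize elliptic curve classes intrinsically: by Proposition \ref{kani3}, $L \in \NS(A_m)$ is the class of an elliptic curve exactly when $L$ is indivisible, $(L.L) = 0$, and $L.H > 0$ for some ample $H$. Since the discriminant of $q_L$ equals $-4m\deg(L) = -2m(L.L)$ by Theorem \ref{bijnoncm}(2), the condition $(L.L)=0$ is equivalent to $q_L$ having discriminant $0$, and the positivity condition translates (the boundary analogue of Theorem \ref{bijnoncm}(1) and Lemma \ref{ample}) into $q_L$ being a nonzero positive semidefinite form. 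Thus $L \mapsto q_L$ carries the elliptic curve classes bijectively and $\Gamma_0(m)$-equivariantly onto the set of nonzero positive semidefinite discriminant-$0$ forms in $V_m$ arising from indivisible $L$. These forms need not be content-primitive: for instance $q_h = mx^2$ and $q_{\Gamma_\lambda} = m(x-y)^2$ both have content $m$. What is intrinsic is the radical of the form, and I would phrase the asserted bijection with \emph{primitive} discriminant-$0$ forms through this radical.

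Next I would make the identification with $\P^1(\Q)$. A nonzero positive semidefinite form of discriminant $0$ is uniquely $c_0(rx+sy)^2$ with $(r,s) \in \Z^2$ primitive (taken up to sign) and $c_0 \geq 1$ its content; sending it to the vanishing point $[s:-r] \in \P^1(\Q)$ of the underlying primitive square is $\Gamma_0(m)$-equivariant, since $\Gamma_0(m)$ acts on forms by substitution and on $\P^1(\Q)$ by the corresponding linear action, and the content is an invariant. The primitive squares $(rx+sy)^2$ themselves biject with $\P^1(\Q)$, which is the second assertion of the proposition. To see that the induced map from elliptic curve classes to $\P^1(\Q)/\Gamma_0(m)$ is a bijection, I would bring in the geometric parametrization of Proposition \ref{redpp}: every elliptic curve on $A_m$ is the image of $f_{x,y}\colon P \mapsto (xP, y\lambda P)$, depending only on the primitive ratio $[x:y] \in \P^1(\Q)$, and one checks that the radical of $q_{[\mathrm{im}\, f_{x,y}]}$ is the point determined by $[x:y]$. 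This simultaneously gives surjectivity onto $\P^1(\Q)$ and shows that distinct $\Aut(A_m)$-orbits map to distinct $\Gamma_0(m)$-orbits, completing the first two claims.

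For the final count I would stratify $\P^1(\Q)$ by the isomorphism type of the corresponding curve. By Proposition \ref{redpp}, $\mathrm{im}\, f_{x,y} \cong E_k$ with $k = \gcd(x,m)$, and $k$ is constant on $\Gamma_0(m)$-orbits, so the classes isomorphic to $E_k$ correspond to the $\Gamma_0(m)$-orbits of cusps with invariant $k$. This is the classical cusp computation for $\Gamma_0(m)$, with one twist: our $\Gamma_0(m)$ sits inside $\GL_2(\Z)$ and hence contains elements of determinant $-1$. For the determinant $+1$ subgroup, the orbits of cusps of invariant $k$ are indexed by $(\Z/f_k\Z)^\times$ with $f_k = \gcd(k, m/k)$; the determinant $-1$ elements act on this index set through $x \mapsto -x$, so passing to $\Gamma_0(m) \subset \GL_2(\Z)$ collapses it to $(\Z/f_k\Z)^\times/\{\pm 1\}$, whose cardinality is the asserted count.

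The main obstacle I anticipate is the orbit computation of the last paragraph: matching the $\GL_2(\Z)$-version of $\Gamma_0(m)$ acting on $\P^1(\Q)$ to the group $(\Z/f_k\Z)^\times/\{\pm 1\}$, and in particular tracking how the determinant $-1$ elements reduce the standard $\SL_2$ cusp count. A secondary point requiring care is the compatibility in the second paragraph, namely that the radical of $q_{[\mathrm{im}\, f_{x,y}]}$ is exactly the point $[x:y]$; this is what pins down the content $c_0$ within each radical direction and thereby secures injectivity despite the forms $q_L$ failing to be content-primitive.
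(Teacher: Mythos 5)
Your proposal is correct and follows essentially the same route as the paper: the paper likewise combines Kani's numerical characterization of elliptic curve classes (Proposition \ref{kani3}) with the explicit computation $q_{E_{a,b}} = k'(ax-by)^2$ for the curves parametrized by Proposition \ref{redpp} (so the forms are non-primitive of content $k'$, exactly the issue your radical construction handles), and then reduces the final count to the cusps of $X_0(m)$, citing \cite[p.~234]{gz} with the same caveat you work out by hand, namely that the determinant $-1$ elements of $\Gamma_0(m)$ act by $x \mapsto -x$ and collapse $(\Z/f_k\Z)^\times$ to $(\Z/f_k\Z)^\times/\{\pm 1\}$.
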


\begin{proof}
Let $E_{a,b}$ be the image of $f_{a,b} : E \to A_m$, as in the proof of Proposition \ref{redpp}.  Then $E_{a,b}$ is isomorphic to $E_k$ where $k = \gcd(a,m)$, and one computes
$$E_{a,b} \equiv (a^2/k)h + (ab/k)X_\l + b^2k'v \in \NS(A_m),$$
where $k' = m/k$.  Thus $q_{E_{a,b}}$ is the quadratic form $k'(ax -by)^2$ of discriminant 0.  The first two statements of the proposition are now clear once we note that the elliptic curves in $\NS(A_m)$ are exactly the indivisible classes $F$ such that $F.F =0$ and $h^0(F) > 0$ \cite{kani3}.  The last statement now follows from \cite[p. 234]{gz}; note though that our $\Gamma_0(m)$ contains elements of determinant $-1$.     
\end{proof}

The map $\Psi_{m,d}$ of Proposition \ref{formsmap} has an important extra equivariance property, which we elaborate on for the remainder of this section.  For each $k |m$, we write $k'$ for the complementary divisor $m/k$.  If $(k, k') = 1$, we may consider the endomorphism 
$$\e_k = \l_k \times \hat\mu_{k'}: A_m = E \times E' \to E_k \times E_{k'} \cong A_m.$$  Note that we have implicitly chosen an isomorphism $A_m \cong E_k \times E_{k'}$, so the map $\e_k$ is not very well defined.  In any case, it has degree $k^2$ and $w_k = \frac{1}{k} (\e_k)_*$ is an automorphism of the quadratic space $\NS(A_m)$.  In fact, 
\begin{equation}\label{atkin}
w_k(h) = h_k, \hspace{4mm} w_k(X_\l) = X_{\tilde \l}, \hspace{4mm} w_k(v) = v_k
\end{equation}
where $h_k, X_{\tilde\l}, v_k$ is the standard basis on $\NS(E_k \times E_{k'})$.   Straightforward computations result in the following lemma highlighting the connection between the collection of $w_k$ and the Atkin-Lehner involutions on $X_0(m)$.  We write $\omega(m)$ for the number of distinct primes dividing $m$.      
\begin{lemma}
The $w_k$ commute with each other, and satisfy $w_k^2 = 1$.  Moreover, up to $\Aut(A_m)$, the automorphism $w_k : \NS(A_m) \to \NS(A_m)$ is independent of the choice of decomposition $A_m = E \times E'$ and the choice of isomorphism $A_m \cong E_k \times E_{k'}$ above.  The $w_k$ therefore define a canonical action of $(\Z/2\Z)^{\omega(m)}$ on $\Aut(A_m)\backslash\NS(A_m)$.  
\end{lemma}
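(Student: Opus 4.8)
The plan is to verify the four assertions of the lemma---commutativity, the involution relation $w_k^2 = 1$, independence of choices up to $\Aut(A_m)$, and the resulting canonical group action---in that order, relying throughout on the defining formula \eqref{atkin} and the fact that each $w_k = \frac{1}{k}(\e_k)_*$ is an automorphism of the quadratic space $\NS(A_m)$. The computations of the first two assertions are purely formal: I would compute the composites $w_j w_k$ and $w_k^2$ on the standard basis $\{h, X_\l, v\}$. For $w_k^2$, the point is that $\e_k \circ \e_k$ (after the identification $A_m \cong E_k \times E_{k'}$) is an endomorphism of degree $k^4$ whose pushforward, divided by $k^2$, returns the original basis; equivalently, one checks directly on \eqref{atkin} that iterating the prescription $h \mapsto h_k \mapsto h$ recovers the identity, since applying $w_k$ to the decomposition $A_m \cong E_k \times E_{k'}$ and then reversing the roles of $k$ and $k'$ undoes the operation. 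For commutativity, since the $w_k$ only involve the prime factors dividing $k$, I would reduce to the case where $k$ and $j$ involve disjoint sets of primes (the general case following by multiplicativity), where the two isogenies $\l_k \times \hat\mu_{k'}$ and $\l_j \times \hat\mu_{j'}$ act on complementary pieces and visibly commute.

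Next I would address the independence of the choices. There are two choices buried in the definition of $\e_k$: the product decomposition $A_m = E \times E'$ used to set up the whole correspondence, and the isomorphism $A_m \cong E_k \times E_{k'}$ from Proposition \ref{redpp}. For the second, any two such isomorphisms differ by an automorphism of $E_k \times E_{k'} \cong A_m$, so the resulting $w_k$ differ by precompostion or postcomposition with an element of $\Aut(A_m)$, which is exactly the ambiguity we are allowed to ignore once we pass to $\Aut(A_m) \backslash \NS(A_m)$. For the first, I would invoke the fact (from Proposition \ref{redpp}) that any two product decompositions of $A_m$ are related by an automorphism, so changing the decomposition again alters $w_k$ only by conjugation by an element of $\Aut(A_m)$. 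In both cases the key observation is that $\Aut(A_m)$ acts on $\NS(A_m)$ and the induced action on the quotient $\Aut(A_m) \backslash \NS(A_m)$ kills precisely this ambiguity, so the $w_k$ descend to well-defined maps on the quotient.

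Finally, assembling these facts gives the canonical action of $(\Z/2\Z)^{\omega(m)}$. Writing $m = \prod_{i} p_i^{e_i}$, I would define generators $\tau_i$ of $(\Z/2\Z)^{\omega(m)}$ to act by $w_{p_i^{e_i}}$, the Atkin-Lehner operator at the full prime-power $p_i^{e_i} \| m$ (note $(p_i^{e_i}, m/p_i^{e_i}) = 1$, so this is among the $w_k$ with $(k,k')=1$). Since each $w_{p_i^{e_i}}$ is an involution, commutes with the others, and is canonical up to $\Aut(A_m)$, the assignment extends multiplicatively to a homomorphism $(\Z/2\Z)^{\omega(m)} \to \Aut\bigl(\Aut(A_m)\backslash\NS(A_m)\bigr)$, with $w_k$ for a general $k$ satisfying $(k,k')=1$ corresponding to the product of $\tau_i$ over the primes dividing $k$.

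I expect the main obstacle to be the independence-of-choices step, specifically making precise that the two product decompositions (and the two isomorphisms $A_m \cong E_k \times E_{k'}$) really differ only by an element of $\Aut(A_m)$ and that this conjugation is exactly what the passage to the quotient $\Aut(A_m)\backslash\NS(A_m)$ absorbs. The formal algebraic relations $w_k^2 = 1$ and $w_j w_k = w_k w_j$ are routine basis computations using \eqref{atkin}; the subtlety is entirely in tracking how the implicit identifications propagate and confirming that the pushforward construction $\frac{1}{k}(\e_k)_*$ transforms under a change of decomposition by precisely the expected inner automorphism.
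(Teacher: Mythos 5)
The paper gives no proof of this lemma at all---it is stated as the outcome of ``straightforward computations''---so your proposal stands on its own terms, and it has a genuine gap at exactly the step you flag as the main obstacle. Your independence-of-decomposition argument invokes Proposition \ref{redpp} as saying that any two product decompositions of $A_m$ are related by an automorphism; the proposition asserts the \emph{opposite} whenever $\omega(m)\geq 2$: up to $\Aut(A_m)$ there are $2^{\omega(m)-1}$ pairwise inequivalent reducible principal polarizations, one for each unordered pair $\{k,k'\}$ with $kk'=m$ and $(k,k')=1$. For $m=6$, no automorphism of $A_6$ can relate the decompositions $E\times E'$ and $E_2\times E_3$, since in the non-CM case $E\not\cong E_2,E_3$; this inequivalence is precisely what makes the count $2^{\omega(m)-1}$ in Corollary \ref{nonsm} come out. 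So your argument proves independence only within a single $\Aut(A_m)$-orbit of decompositions. The across-orbit statement is not a loose end one can wave through: replacing the decomposition $E\times E'$ by $E_j\times E_{j'}$ conjugates $w_k$ by $w_j$, so across-orbit independence is essentially \emph{equivalent} to the commutativity assertion; and your proof of $w_k^2=1$, which applies the construction a second time starting from the target decomposition $E_k\times E_{k'}$, silently uses the same unproved across-orbit independence, since for $k\neq 1,m$ that decomposition lies in a different orbit. Note also that none of the relations hold on the nose on $\NS(A_m)$: already for $k=1$, the construction computed with the isomorphism $\phi$ of Proposition \ref{redpp} (taking $r=1$, $s=0$) gives the shear $h\mapsto h$, $v\mapsto mh+X_\l+v$, so ``iterating the prescription $h \mapsto h_k \mapsto h$'' does not literally recover the identity; everything is true only modulo $\Aut(A_m)\cong\Gamma_0(m)$, and any basis computation must carry the compensating automorphism.

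There is a second circularity in your reduction to the quotient. Postcomposing $\e_k$ with an automorphism replaces $w_k$ by $\beta_*w_k$, which indeed induces the same map on $\Aut(A_m)\backslash\NS(A_m)$; but precomposition replaces $w_k$ by $w_k\alpha_*$, and this induces the same map on orbits only if $w_k$ already descends to the quotient---i.e.\ only if $w_k$ normalizes the image of $\Aut(A_m)$ in the isometry group of $\NS(A_m)$, which is part of what is being proved. The workable route, and presumably the computation the authors intend, is explicit: using $\phi$ from Proposition \ref{redpp} with $rk-sk'=1$, one finds $w_k(h)=kh+sX_\l+s^2k'v$ and $w_k(v)=k'h+rX_\l+r^2kv$, with $w_k(X_\l)$ pinned down (up to sign) by orthogonality and $X_\l^2=-2m$. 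Under the identification of Theorem \ref{bijnoncm} this exhibits $w_k$ as a classical Atkin--Lehner element of the normalizer of $\Gamma_0(m)$ in $\GL_2(\Q)$, and the normalization property, the relations $w_k^2\in\Gamma_0(m)$ and $w_jw_k\equiv w_kw_j\equiv w_{jk/(j,k)^2}$ modulo $\Gamma_0(m)$, and the independence of all choices (the pair $(r,s)$ and the decomposition, across orbits) then follow by standard matrix verifications. Alternatively, Proposition \ref{equivariance} identifies the $w_k$-action on classes with translation by the $2$-torsion classes $[f_k]$, from which the relations are immediate where it applies.
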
  

For each $k |m$ and each $L \in \NS(A_m)$, define the quadratic form 
$$q_L^k : \Hom(E_k, A_m) \to \Z$$
$$f \mapsto \deg(f^*L).$$
We embed $\Hom(E_k,A_m)$ inside $\Hom(E, A_m)$ via $g \mapsto g \circ \l_k$, where $\l_k : E \to E_k = E/H_k$ is the natural map.  Then the restriction of $q_L$ to the subspace $\Hom(E_k, A_m)$ is equal to $kq_L^k$. Written in the standard basis $\{\hat {\l_k} \times 0, 0 \times \mu_k\}$ for $\Hom(E_k, E \times E')$, this means
$$q_L^k(y\hat \l_k \times  x\mu_k) = ak'x^2 - 2bmxy + cky^2,$$
i.e. $q_L^k = [ak', -2bm, kc].$  

\begin{lemma}\label{trans}
Let $L \in \NS(A_m)$ and suppose $k | m$ and satisfies $(k,k') = 1$.  Then $q^k_{w_k(L)}$ and $q_L$ are isomorphic quadratic forms.
\end{lemma}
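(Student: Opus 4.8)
The plan is to compute both quadratic forms explicitly by playing the two product decompositions of $A_m$ against each other: I would use $A_m = E \times E'$ to write down $q_L$, and the decomposition $A_m \cong E_k \times E_{k'}$ to write down $q^k_{w_k(L)}$, and then observe that the two computations are formally identical. First, writing $L \equiv ah + bX_\l + cv$, I would recall the formula already established in this section, namely that in the basis $\{(1,0),(0,\l)\}$ of $\Hom(E, A_m)$ one has $q_L = [am,\,-2bm,\,c]$.

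Next I would exploit the fact that $w_k$ sends the standard basis of $\NS(E\times E')$ to the standard basis of $\NS(E_k \times E_{k'})$. By \eqref{atkin} we have $w_k(h) = h_k$, $w_k(X_\l) = X_{\tilde\l}$, and $w_k(v) = v_k$, so linearity gives $w_k(L) = a h_k + b X_{\tilde\l} + c v_k$; that is, $w_k(L)$ has the same coordinate vector $(a,b,c)$ with respect to $E_k \times E_{k'}$ that $L$ has with respect to $E \times E'$. I would then compute $q^k_{w_k(L)}$ directly from its definition $f \mapsto \deg(f^* w_k(L))$ on $\Hom(E_k, A_m)$, treating this as the ``$q$-form'' of $w_k(L)$ with $E_k$ as the distinguished factor of the product $E_k \times E_{k'}$. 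Running verbatim the computation that produced $q_L$---now pulling back $h_k, X_{\tilde\l}, v_k$ along the maps $(y\cdot\id,\, x\tilde\l): E_k \to E_k\times E_{k'}$---should give, in the basis $\{(1,0),(0,\tilde\l)\}$ of $\Hom(E_k,A_m)$, the form $q^k_{w_k(L)} = [\,a\deg(\tilde\l),\, -2b\deg(\tilde\l),\, c\,]$.

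To finish, I would identify $\deg(\tilde\l)$ with $m$: since $w_k$ is an isometry of $\NS(A_m)$ and $\tfrac12 X_\l^2 = \deg(X_\l) = -m$, we have $X_{\tilde\l}^2 = X_\l^2 = -2m$, hence $\deg(\tilde\l) = m$ (equivalently, $E_k \times E_{k'} = A_m$ is again of type $m$, the type being an invariant of the surface). This yields $q^k_{w_k(L)} = [am, -2bm, c] = q_L$, so in particular the two forms are isomorphic. I expect the only real obstacle to be bookkeeping with bases rather than anything substantive: the formula $q^k_L = [ak', -2bm, kc]$ recorded just above is written in the basis $\{\hat\l_k\times 0,\, 0\times \mu_k\}$ with $L$ in $E\times E'$-coordinates, and naively substituting $w_k(L)$ there would force an awkward change of basis between the two product structures; the point of computing $q^k_{w_k(L)}$ directly in the $E_k \times E_{k'}$ picture is precisely to sidestep this. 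Finally, since $w_k$ is only well defined up to the choice of isomorphism $A_m \cong E_k \times E_{k'}$, I would note that a different choice moves $w_k(L)$ within its $\Aut(A_m)$-orbit and hence changes $q^k_{w_k(L)}$ only up to $\Gamma_0(m)$-equivalence, so the asserted isomorphism class is well defined.
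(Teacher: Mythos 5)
Your proof is correct and is essentially the paper's own argument: the paper disposes of this lemma in one line (``this follows from \eqref{atkin} and transport of structure''), and your explicit computation of $q^k_{w_k(L)}$ in the $E_k \times E_{k'}$ coordinates---where $w_k(L)$ has the same coordinate vector $(a,b,c)$ that $L$ has in the $E \times E'$ coordinates---is precisely that transport-of-structure argument spelled out. Your closing remarks (identifying $\deg(\tilde\l)=m$ via the invariance of the type, and noting well-definedness up to $\Aut(A_m)$) are sound and merely make explicit what the paper leaves implicit.
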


\begin{proof}
This follows from (\ref{atkin}) and transport of structure.  
\end{proof}

%Recall that for any quadratic discriminant $D< 0$, we may identify $\GL_2(\Z) \backslash V^\prim_{D}$ with $\Pic(\O_D)^+$, where $\Pic(\O_D)$ is the Picard group of the quadratic order of discriminant $D$ and $\Pic(\O_D)^+$ is the set obtained by identifying an element and its inverse.  We will write $\Cl(D)$ for $\Pic(\O_D)^+$.  

If $g^2 | D$ and $D/g^2$ is a discriminant, then extension of ideals induces a surjective group homomorphism
$$e_g: \Pic(\O_D) \to \Pic(\O_{D/g^2}),$$ and hence a surjective map 
$$e_g: \Cl(D) \to \Cl(D/g^2).$$
If $q$ is a primitive quadratic form of discriminant $D$, then we write $[q]$ for the corresponding class in $\Pic(\O_D)$ or $\Cl(D)$.  We warn the reader now that we tend to conflate $\Cl(D)$ and $\Pic(\O_D)$ whenever there is no harm in doing so.     
 
Now suppose $(k,k') =1$ and $L \in \NS(A_m)$ has degree $d$, for some squarefree $d \geq 1$.  We write $g = \gcd(k,d)$.  For each $k$ we have the primitive quadratic form $f_k = \frac{1}{g}[k, 0, k'd]$ of discriminant $-4md/g^2$.  The class $[f_k]$ is 2-torsion, as can be seen from Lemma \ref{2torslemma}.  If, for example, $(m,d) = 1$, then the classes $[f_k] \in \Cl(-4md)$ form a subgroup isomorphic to $(\Z/2\Z)^{\omega(m)}$.  The following key proposition relates $q_L$ to $q_L^k$.  

\begin{proposition}\label{equivariance}
Let $L \in \NS(A_m)$ have degree $d \geq 1$ and suppose $q_L$ is primitive.  Also suppose $k | m$ satisfies $(k,k') = 1$ and write $g = (k,d)$.  Then 
$$e_g\left([q_L]\right) \cdot [f_k] = \left[\frac{1}{g}q_L^k\right] \in \Cl(-4md/g^2).$$ 
If $q_L$ is merely matrix-primitive, then 
$$e_g\left(\left[\frac{1}{2}q_L\right]\right) \cdot  [g_k] = \left[\frac{1}{2g}q_L^k\right] \in \Cl(-md/g^2),$$
where $g_k = \frac{1}{g}\left[k,k, (k + k'd)/4\right].$   
\end{proposition}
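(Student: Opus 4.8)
The plan is to translate the whole statement into an identity of ideal classes and reduce it to one explicit computation in an imaginary quadratic order. I use the classical dictionary sending a positive definite primitive form $[A,B,C]$ of discriminant $D<0$ to the proper ideal $A\Z+\frac{-B+\sqrt{D}}{2}\Z$ of $\O_D$, which identifies the form class group with $\Pic(\O_D)$. Writing $\tau=\sqrt{-md}$, so that $\O_{-4md}=\Z[\tau]$, the forms $q_L=[am,-2bm,c]$, $f_k=[k,0,k'd]$ and $q_L^k=[ak',-2bm,kc]$ of discriminant $-4md$ correspond to
\[
\mathfrak a_L=am\Z+(bm+\tau)\Z,\qquad \mathfrak f_k=k\Z+\tau\Z,\qquad \mathfrak a_L^k=ak'\Z+(bm+\tau)\Z .
\]
Because $[f_k]$ is $2$-torsion (Lemma \ref{2torslemma}), replacing the representative $[q_L]$ by $[q_L]^{-1}$ sends the left-hand side to its inverse; hence the asserted equality is well defined in $\Cl$, and it suffices to prove the corresponding equality of classes in $\Pic$.

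First I would treat the case $g=\gcd(k,d)=1$, where the claim reduces to $[\mathfrak a_L][\mathfrak f_k]=[\mathfrak a_L^k]$. Here I first check that primitivity of $q_L$ forces $\gcd(c,m)=1$, and then that $q_L^k$ is itself primitive, so that $\mathfrak a_L^k$ is a proper ideal. The heart of the matter is the exact lattice identity
\[
\mathfrak a_L\,\mathfrak f_k \;=\; k\,\mathfrak a_L^k \;=\; am\Z+(kbm+k\tau)\Z .
\]
Expanding $\mathfrak a_L\mathfrak f_k$ produces the four $\Z$-generators $amk$, $am\tau$, $kbm+k\tau$ and $bm\tau-md$, and I would show each lies in $am\Z+(kbm+k\tau)\Z$. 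The only input beyond formal manipulation is the degree relation $d=ac-b^2m$, which is exactly what makes the reduction of the generator $bm\tau-md$ produce the integer coefficient $-c$. Since $\mathfrak a_L\mathfrak f_k$ and $k\mathfrak a_L^k$ both have norm $amk$, the containment forces equality, and cancelling the principal ideal $(k)$ gives $[\mathfrak a_L][\mathfrak f_k]=[\mathfrak a_L^k]$.

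For general $g$ I would reduce to the previous case. The point is that $q_L$ is \emph{imprimitively induced} from the form $q_M:=[am/g^2,-2bm/g,c]$ of discriminant $-4md/g^2$: one checks $g\mid a$ (using $\gcd(c,m)=1$ and $g\mid d$), so $q_M$ has integer coefficients and $q_L(x,y)=q_M(gx,y)$. By the standard description of the extension map on form class groups induced by $\O_{-4md}\subset \O_{-4md/g^2}$ — namely that $e_g$ sends the class of a primitive form $q$ with $q(x,y)=Q(gx,y)$ to $[Q]$, which one can also verify directly on ideals by taking a representative of $[q_L]$ with leading coefficient prime to $g$ — this gives $e_g([q_L])=[q_M]$ in $\Cl(-4md/g^2)$. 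Now $q_M$ is precisely the form attached to the polarization $(a/g)h+bX_\lambda+cv$ of degree $d/g$ on a surface of type $m/g$, and $k/g$ is a divisor of $m/g$ with complementary divisor $k'$ coprime to it and with $\gcd(k/g,d/g)=1$. Applying the already-established case $g=1$ to these data, and noting that the corresponding forms ``$f$'' and ``$q^{k/g}$'' are exactly $f_k=[k/g,0,k'd/g]$ and $q_M^{k/g}=\tfrac1g q_L^k$, yields $[q_M]\cdot[f_k]=[\tfrac1g q_L^k]$, which combined with $e_g([q_L])=[q_M]$ proves the first assertion. I expect the identification $e_g([q_L])=[q_M]$ to be the main obstacle: one must justify the description of $e_g$ on imprimitively related forms (equivalently, that extension of the proper ideal $\mathfrak a_L$ computes $e_g$ despite $\mathfrak a_L$ not being prime to the conductor $g$), and keep track of the primitivity of $q_M$ and $\tfrac1g q_L^k$ so that all classes are defined.

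Finally, I would handle the matrix-primitive case by running the identical argument one discriminant down. When $q_L$ is matrix-primitive but not primitive it has content $2$, so $\tfrac12 q_L=[am/2,-bm,c/2]$ is primitive of discriminant $-md$ and defines a class in $\Pic(\O_{-md})$. The forms $g_k=\tfrac1g[k,k,(k+k'd)/4]$ and $\tfrac1{2g}q_L^k=[ak'/(2g),-bm/g,kc/(2g)]$ of discriminant $-md/g^2$ are the analogues of $f_k$ and $\tfrac1g q_L^k$; here $g_k$ replaces $f_k$ because $[k,k,(k+k'd)/4]$ is the norm form of $k\Z+\frac{-k+\sqrt{-md}}{2}\Z$, the natural representative in the odd-discriminant order. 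The same two ideal identities, now carried out in $\O_{-md}$ and $\O_{-md/g^2}$, give $e_g([\tfrac12 q_L])\cdot[g_k]=[\tfrac1{2g}q_L^k]$. The only extra care here is the bookkeeping of the factors of $2$ and the verification that $4\mid k+k'd$, so that $g_k$ indeed has integer coefficients.
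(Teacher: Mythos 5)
Your proposal is correct and takes essentially the same route as the paper: you compute $e_g([q_L])=[am/g^2,\,-2bm/g,\,c]$ via the imprimitive-induction description of the extension map (the paper's Lemma \ref{picardmap}, proved by the same ideal argument you sketch), and then reduce everything to one explicit composition identity at discriminant $-4md/g^2$ (resp.\ $-md/g^2$). The only difference is in implementation: where the paper verifies that identity by Dirichlet's composition formula with $e=k/g$ and $B=-2mb/g$, you verify the equivalent lattice identity $\mathfrak{a}_L\,\mathfrak{f}_k = k\,\mathfrak{a}_L^k$ directly (containment via the degree relation $d=ac-b^2m$, then equality by comparing norms) --- two ways of performing the same calculation, and your auxiliary checks (that $g\mid a$, $(c,m)=1$, the primitivity of $q_M$ and $\tfrac1g q_L^k$, and $4\mid k+k'd$ in the matrix-primitive case) are all sound.
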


\begin{proof}
We only prove the first equation, the proof of the second being similar.  We first find a quadratic form representing $e_g(q_L)$.  Note that $(c,m) = 1$, since $q_L = [am,-2bm,c]$ is primitive.  Then as $ac - mb^2 = d$, we must have $(c,g) = 1$ and $g | a$.  We claim that 
$$e_g([q_L]) = [am/g^2, -2bm/g, c].$$
This follows from the following lemma.
\begin{lemma}\label{picardmap}
Let $g \geq 1$ be an integer.  If $q(x,y)$ is a primitive quadratic form of discriminant $D < 0$ and $h(x,y) = q(gx,y)$ is primitive of discriminant $g^2D$, then $e_g([h]) = [q].$
\end{lemma}
\begin{proof}
The quadratic form $q = [a,b,c]$ corresponds to the proper $\O_D$-ideal $I = a\Z + \left(\frac{-b + \sqrt{D}}{2}\right)\Z$, and $h = [ag^2,bg,c]$ corresponds to $g^2a \Z +  \left(\frac{-kb + k\sqrt{D}}{2}\right)\Z.$  The latter is equivalent to the ideal $I_g = ga\Z +  \left(\frac{-b + \sqrt{D}}{2}\right)\Z$ in $\Pic(\O_{Dg^2})$.  As $I$ is the $\O_D$-ideal generated by $I_g$, the lemma follows.  
\end{proof}
To prove the first equation in the proposition it now suffices to show that 
\begin{equation}\label{clgpeq}[am/g^2, -2bm/g, c] \cdot [k/g, 0, k'd/g] = [ak'/g, -2bm/g, ck/g]\end{equation}
in the class group.  We prove this using the old-fashioned definition of Gauss composition.  Actually, we will use Dirichlet's method of composition \cite{Cox}:  the product $[a,b,c] \cdot [a',b',c']$ of two primitive forms of discriminant $D$ is equal to 
$$[aa'/e^2 ,\,  B ,\, e^2(B^2 - D)/(4aa')],$$
where $e = \gcd(a,a', (b + b')/2)$ and $B$ is an integer satisfying 
$$B \equiv b \mbox{ (mod } 2a/e )$$
$$B \equiv b' \mbox{ (mod } 2a'/e)$$
$$B^2 \equiv D \mbox{ (mod } 4aa'/e^2).$$
In our case we have $D = -4md/g^2$, $e = k/g$, and $B = -2mb/g$.  Using this rule, we verify (\ref{clgpeq}) and finish the proof of the proposition.  
\end{proof}
 
\begin{corollary}\label{equiv}
Suppose $L$ is ample of degree $d$, $q_L$ is primitive, and $(k,k') = 1$.  Then
$$e_g(\Psi_{m,d}(L)) \cdot [f_k] = \Psi_{m,d}(w_k(L)) \in \Cl(4md/g^2).$$ 
If $L$ is merely matrix-primitive, then
$$e_g(\Psi_{m,d}(L)) \cdot [g_k] = \Psi_{m,d}(w_k(L)) \in \Cl(-md/g^2).$$
\end{corollary}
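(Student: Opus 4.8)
The plan is to read this corollary as the geometric translation of Proposition~\ref{equivariance}: that proposition is phrased entirely in terms of the forms $q_L$ and $q_L^k$, whereas the corollary repackages it in terms of the map $\Psi_{m,d}$ and the operators $w_k$. The only two things I need to supply are (i) the identification of $\Psi_{m,d}(L)$ and of $\Psi_{m,d}(w_k(L))$ with the appropriate form classes, and (ii) a correct, orientation-insensitive application of Lemma~\ref{trans}. Throughout I work in $\Cl(-4md/g^2)$, where a form and its inverse are identified, so that the ``isomorphic'' of Lemma~\ref{trans} may be promoted to an equality of classes.

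First I would dispose of the left-hand side. Since $q_L$ is primitive its matrix-content $(c,m)$ equals $1$, so by definition $\Psi_{m,d}(L)=[q_L]$, and Proposition~\ref{equivariance} gives directly
\[
e_g\bigl(\Psi_{m,d}(L)\bigr)\cdot[f_k]=e_g([q_L])\cdot[f_k]=\Bigl[\tfrac{1}{g}q_L^k\Bigr]\in\Cl(-4md/g^2),
\]
with $g=(k,d)$. It then remains only to show that the right-hand side $\Psi_{m,d}(w_k(L))$ equals $[\tfrac1g q_L^k]$ as well.

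The key step is to apply Lemma~\ref{trans} not to $L$ but to the class $M=w_k(L)\in\NS(A_m)$. Since $w_k^2=1$ we have $w_k(M)=L$, and the lemma yields $q_L^k=q^k_{w_k(M)}\cong q_M=q_{w_k(L)}$; that is, the form $q_{w_k(L)}$ defining $\Psi_{m,d}(w_k(L))$ is isomorphic to $q_L^k$. Because $w_k$ is an automorphism of the lattice $\NS(A_m)$, the class $w_k(L)$ is again indivisible, so (as $d$ is squarefree) the matrix-content of $q_{w_k(L)}$ is $(c_k,m)$, where $c_k$ is its $v$-coefficient; and since matrix-content is a $\GL_2(\Z)$-invariant, this equals the content of $q_L^k$, which is exactly $g$. (This is the content already implicit in Proposition~\ref{equivariance}, where $\tfrac1g q_L^k$ is treated as a primitive form of discriminant $-4md/g^2$; a short $\gcd$ computation using $(k,k')=1$, $(c,m)=1$, and $d=ac-b^2m$ confirms $\gcd(ak',bm,kc)=(k,d)$.) Therefore
\[
\Psi_{m,d}(w_k(L))=\Bigl[\tfrac{1}{(c_k,m)}q_{w_k(L)}\Bigr]=\Bigl[\tfrac{1}{g}q_{w_k(L)}\Bigr]=\Bigl[\tfrac1g q_L^k\Bigr],
\]
and comparing with the display above proves the first equation. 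For the second equation one runs the identical argument, replacing the first equality of Proposition~\ref{equivariance} by the second, the class $[f_k]$ by $[g_k]$, and the discriminant $-4md/g^2$ by $-md/g^2$; the isomorphism $q_{w_k(L)}\cong q_L^k$ coming from Lemma~\ref{trans} is unchanged, and only the identification of $\Psi_{m,d}$ with a class in $\Cl(-md/g^2)$ (via halving the merely-matrix-primitive form) differs.

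I expect the only delicate point to be the bookkeeping of normalizations---making sure the content of $q_L^k$ is $g=(k,d)$, so that the factor $\tfrac1g$ appearing in Proposition~\ref{equivariance} matches the factor $\tfrac{1}{(c_k,m)}$ built into the definition of $\Psi_{m,d}$---together with the reversed application of Lemma~\ref{trans} via $w_k^2=1$. Once these are pinned down, the corollary is a formal consequence of the two cited results.
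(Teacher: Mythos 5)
Your proposal is correct and is exactly the paper's argument: the paper's proof reads, in full, ``Take $L$ to be $w_k(L)$ in Lemma \ref{trans}, and then use Proposition \ref{equivariance},'' which is precisely your reversed application of Lemma \ref{trans} via $w_k^2=1$ combined with the first (resp.\ second) equation of Proposition \ref{equivariance}. Your additional bookkeeping---checking that the matrix-content of $q_L^k$, hence of $q_{w_k(L)}$, is exactly $g=(k,d)$ so that the normalization $\tfrac1g$ matches the $\tfrac{1}{(c_k,m)}$ in the definition of $\Psi_{m,d}$---is left implicit in the paper, and you verify it correctly.
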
 

\begin{proof}
Take $L$ to be $w_k(L)$ in Lemma \ref{trans}, and then use Proposition \ref{equivariance}.   
\end{proof} 
Let $W(m)$ be the group $(\Z/2\Z)^{\omega(m)}$.  Then Corollary \ref{equiv} essentially says that $\Psi_{m,d}$ is $W(m)$-equivariant.  This is not exactly true because the target of $\Psi_{m,d}$ does not quite have an action of $W(m)$, as the 2-torsion classes $[f_k]$ lie in different class groups.  However, in many situations we have actual $W(m)$-equivariance, as the following lemma shows:

\begin{corollary}\label{diagram}
Suppose $d$ is squarefree, $(m,d) = 1$, and $md \not \equiv 3$ $($mod $4)$.  Then $\Psi_{m,d}$ is equivariant for the action of $W(m)$, i.e. the following square commutes for each $k |m $ satisfying $(k,m/k) = 1$:
\[\begin{CD}
\Aut(A_m)\backslash \NS(A_m)_d^\amp @> w_k>> \Aut(A_m)\backslash \NS(A_m)_d^\amp\\
@V\Psi_{m,d}VV    @VV\Psi_{m,d}V  \\
\Cl(-4md) @> \times [f_k]>>  \Cl(-4md)
\end{CD}\]
\end{corollary}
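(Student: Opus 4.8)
The plan is to obtain the commutativity of the square as the special case $g = 1$ of Corollary \ref{equiv}. Since $k \mid m$ and $(m,d) = 1$, the integer $g = \gcd(k,d)$ divides $\gcd(m,d) = 1$, so $g = 1$; consequently the extension map $e_g = e_1 : \Pic(\O_{-4md}) \to \Pic(\O_{-4md})$ is the identity. For any ample $L$ of degree $d$ with $q_L$ primitive, Corollary \ref{equiv} then reads
$$\Psi_{m,d}(L) \cdot [f_k] = \Psi_{m,d}(w_k(L)) \in \Cl(-4md),$$
which is exactly the statement that going down-then-right in the diagram (apply $\Psi_{m,d}$, then multiply by $[f_k]$) agrees with going right-then-down (apply $w_k$, then $\Psi_{m,d}$). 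So the whole content of the corollary reduces to checking that the hypotheses of Corollary \ref{equiv} hold for all the relevant $L$, and that every object in sight really lives in the single class group $\Cl(-4md)$.

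The one genuine subtlety --- and the step I expect to be the main obstacle --- is verifying that $q_L$ is primitive, not merely matrix-primitive. Because $d$ is squarefree, every $L \in \NS(A_m)_d^\amp$ is indivisible, so $\gcd(a,b,c) = 1$ and, as noted before Proposition \ref{formsmap}, the matrix-content of $q_L = [am,-2bm,c]$ equals $\gcd(c,m)$, which divides $\gcd(m,d) = 1$; hence $q_L$ is matrix-primitive. To upgrade this to genuine primitivity I would invoke the description of $V^\mprim_{4D}$ (with $D = -md$): the hypothesis $md \not\equiv 3 \pmod 4$ is exactly what forces $V^\mprim_{-4md} = V^\prim_{-4md}$, i.e.\ there are no content-$2$ matrix-primitive forms of discriminant $-4md$, so $q_L$ is primitive. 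This single fact does double duty: it makes the target $\GL_2(\Z)\backslash V^\mprim_{-4md}$ of the bijection of Proposition \ref{formsmap} equal to the single class group $\GL_2(\Z)\backslash V^\prim_{-4md} = \Cl(-4md)$ (rather than a disjoint union also involving $\Cl(-md)$), and it places us in the $[f_k]$-branch of Corollary \ref{equiv} rather than the $[g_k]$-branch.

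With primitivity in hand the assembly is immediate. The isometry $w_k$ preserves degree and ampleness, so $w_k(L)$ is again an ample degree-$d$ class with primitive associated form; both vertical maps therefore land in $\Cl(-4md)$, and the displayed identity gives commutativity of the square for each admissible $k$ (those $k \mid m$ with $(k, m/k) = 1$). To package this as honest $W(m)$-equivariance I would recall that these $w_k$ generate a canonical action of $W(m) = (\Z/2\Z)^{\omega(m)}$ on $\Aut(A_m)\backslash \NS(A_m)$, and that for $(m,d) = 1$ the $2$-torsion classes $[f_k]$ generate a subgroup of $\Cl(-4md)$ isomorphic to $W(m)$ acting by translation; commutativity of each elementary square then upgrades to equivariance for the full group $W(m)$.
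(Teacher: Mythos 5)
Your proposal is correct and takes essentially the same route as the paper: the paper's proof likewise deduces the square from Corollary \ref{equiv}, observing that the hypothesis $md \not\equiv 3$ $($mod $4)$ guarantees $q_L$ is primitive (not merely matrix-primitive) and that the target of $\Psi_{m,d}$ is the single class group $\Cl(-4md)$ rather than $\Cl(-4md) \coprod \Cl(-md)$. Your extra details---that $g = \gcd(k,d) = 1$ makes $e_g$ the identity, and the squarefree-$d$ argument for matrix-primitivity---simply spell out what the paper leaves implicit.
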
  
\begin{proof}
This follows from the previous corollary.  The condition on $md$ guarantees that for any ample $L$ of degree $d$, $q_L$ is primitive and not just matrix primitive and that the target of $\Psi_{m,d}$ is $\Cl(-4md)$ and not $\Cl(-4md) \coprod \Cl(-md)$.  
\end{proof}

\begin{remark}
When $md \equiv 3$ (mod 4), the square above still commutes if the top row is restricted to those $L$ such that $q_L$ is primitive.  If we restrict $\Psi_{m,d}$ to the $q_L$ which are merely matrix-primitive (so that the target of the vertical maps is actually $\Cl(-md)$), then $\Psi_{m,d}$ is equivariant with respect to the $[g_k]$ from Proposition \ref{equivariance}.    
\end{remark}
 
%  SINGULAR ABELIAN SURFACES 
\section{The correspondence in the CM case}\label{singular} 

For this section, we let $A$ be an abelian surface of Picard number 4, also called a \textit{singular} abelian surface.  By \cite[4.1]{SM}, $A$ is isomorphic to $E_1 \times E_2$, where $E_i$ are elliptic curves with CM by the same imaginary field $K$, and we say that $A$ has CM by $K$.  Since $k = \bar k$ has characteristic 0, there is no loss in generality if we work over $\C$, which is what we will do.  

In what follows, we write $\O_f$ for the unique order of index $f$ in $\O_K$; $f$ is also called the \textit{conductor} of the order.  A lattice $\a$ in $K$ is a free $\Z$-module of rank 2 such that $\a \otimes_\Z \Q = K$.  The ring of multipliers $R(\a) = \{x \in K : x\a \subset \a\}$ is an order in $K$, and we write $f_\a$ for the conductor of $R(\a)$.  Two lattices $L, L'$ are equivalent if $L = \gamma L'$ for some $\gamma \in K$.  We write $[L]$ for the equivalence class of $L$.

\begin{theorem}[Mitani-Shioda]
The association 
\[([\a], f) \mapsto \C/\O_{ff_\a} \times \C/\a\] 
is a bijection from the set of pairs $([\a],f)$ of equivalence classes of lattices in $K$ and integers $f \geq 1$ to the set of isomorphism classes of singular abelian surfaces with CM by $K$.   
\end{theorem}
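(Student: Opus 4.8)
The plan is to translate the statement into a classification of lattices in $K^2$ up to $\GL_2(K)$, and then to feed it into the structure theory of torsion-free rank-two modules over a quadratic order. First I would record the lattice dictionary for CM elliptic curves: for lattices $\a,\b$ in $K$ one has $\Hom(\C/\a,\C/\b)=\{\gamma\in K:\gamma\a\subseteq\b\}$, so $\C/\a\cong\C/\b$ iff $[\a]=[\b]$, and $\End(\C/\a)=R(\a)=\O_{f_\a}$. Writing a product as $\C/\a_1\times\C/\a_2=(K\otimes_\Q\R)^2/(\a_1\oplus\a_2)$, with complex structure induced by the common CM type $\Phi$, every $M\in\GL_2(K)$ is $\R$-linear, commutes with $\Phi$, and hence gives a holomorphic isomorphism. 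Conversely, the diagonal copy of $K$ is the center of $\End^0(A)=M_2(K)$, so it is preserved by any isomorphism, and holomorphy forces $\Phi$ (hence $K$ itself) to be fixed pointwise; thus every isomorphism is $K$-linear. Therefore $\C/\a_1\times\C/\a_2\cong\C/\b_1\times\C/\b_2$ iff the lattices $\a_1\oplus\a_2$ and $\b_1\oplus\b_2$ lie in one $\GL_2(K)$-orbit, i.e.\ are isomorphic as $K$-lattices. Since by \cite[4.1]{SM} every singular surface is such a product, the theorem becomes the claim that every decomposable lattice in $K^2$ is $\GL_2(K)$-equivalent to a \emph{unique} $\O_{ff_\a}\oplus\a$.

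Next I would isolate the obvious invariant. For a lattice $\Lambda\subset K^2$ set $R(\Lambda)=\{x\in K:x\Lambda\subseteq\Lambda\}$; as scalars commute with $\GL_2(K)$, this order is an isomorphism invariant. For a product one computes $R(\a_1\oplus\a_2)=\O_{f_1}\cap\O_{f_2}=\O_{\lcm(f_1,f_2)}$, and for the normal form $R(\O_{ff_\a}\oplus\a)=\O_{ff_\a}\cap\O_{f_\a}=\O_{ff_\a}$, so the integer $N:=ff_\a=\lcm(f_1,f_2)$ is recovered intrinsically. I would then regard $\Lambda$ as a torsion-free rank-two module over the quadratic order $R(\Lambda)=\O_N$ and invoke the classical structure theorem (Dade--Taussky--Zassenhaus, Borevich--Faddeev, Bass), valid because quadratic orders are Gorenstein: every such module is isomorphic to $\O_N\oplus J$, where $J$ is a proper fractional ideal of a uniquely determined overorder $\O'\supseteq\O_N$, and the class of $J$ is determined by $\Lambda$ (the order-level Steinitz isomorphism $I\oplus I'\cong\O_N\oplus II'$ is what normalizes the projective summand to $\O_N$ itself). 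Applying this gives $\Lambda\cong\O_N\oplus\a$ with $R(\a)=\O_{f_\a}$ and $f_\a\mid N$; setting $f=N/f_\a\ge 1$ produces the normal form and proves surjectivity.

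Injectivity would then follow from the uniqueness clause of the same theorem: $N$ is read off from $R(\Lambda)$, the overorder $\O_{f_\a}$ (hence $f_\a$, and $f=N/f_\a$) is the uniquely determined ``type'' of the non-projective summand, and the remaining Steinitz/Picard datum is exactly the equivalence class $[\a]$; two lattices in $K$ are equivalent iff they have the same multiplier order and the same $\Pic(\O_{f_\a})$-class. Hence $([\a],f)$ and $([\a'],f')$ give isomorphic surfaces iff they agree. I would finish by checking well-definedness (replacing $\a$ by $\gamma\a$ leaves $f_\a$ and the surface unchanged) and that $\C/\O_{ff_\a}\times\C/\a$ is genuinely singular with CM by $K$. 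The main obstacle is the module-theoretic input of the second paragraph: one must invoke the precise \emph{existence and uniqueness} classification of torsion-free rank-two modules over a possibly non-maximal quadratic order and correctly match its invariants ($\O', [J]$) with $(f_\a,[\a])$ — all the content lives in the non-maximal orders, where summands need not be projective and $\O_{f_\a}$ genuinely varies. A secondary point requiring care is the holomorphicity reduction of the first paragraph, namely that abstract $K$-lattice isomorphisms correspond exactly to isomorphisms of the complex abelian surfaces.
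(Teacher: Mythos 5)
Your proposal is correct, but it reaches the bijection by a genuinely different route than the paper. The paper's proof is, at bottom, a citation: after invoking \cite[4.1]{SM} to split $A$, it quotes the Shioda--Mitani isomorphism criterion \cite[4.5]{SM} --- $\C/\a_1\times\C/\a_2\cong\C/L_1\times\C/L_2$ if and only if $[\a_1\a_2]=[L_1L_2]$ and $\lcm(f_1,f_2)=\lcm(c_1,c_2)$ --- and then massages a given decomposition into the normal form via a lemma arranging $f_2\mid f_1$ (choosing $L_1$ of conductor $\lcm(f_1,f_2)$, $L_2$ of conductor $\gcd(f_1,f_2)$, and adjusting the class). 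You instead prove the linearization step explicitly (every holomorphic isomorphism of such products is $K$-linear, because the center of $\End^0 = M_2(K)$ acts by scalars on the tangent space, so the induced automorphism of $K$ is trivial) --- a step the paper never needs to surface because \cite[4.5]{SM} is already a statement about complex tori --- and then feed the resulting $\GL_2(K)$-classification of lattices in $K^2$ into the Borevich--Faddeev/Bass structure theory of torsion-free rank-two modules over quadratic orders. These two engines are equivalent in content: \cite[4.5]{SM} is that module classification in disguise, and your route is essentially the algebraic proof given by Kani, to which the paper's own proof points via \cite[Cor.\ 66]{kaniCM}. Your matching of invariants is the right one, and the delicate point you flag is exactly where the content lives: the multiset $\{f_1,f_2\}$ of conductors of the factors is \emph{not} an isomorphism invariant (e.g.\ $\O_2\oplus\O_3\cong\O_6\oplus\O_1$ as lattices in $K^2$, since both have $\lcm = 6$ and product class $[\O_1]$), so the structure theorem must be quoted in the precise form you state --- multiplier order $R(\Lambda)=\O_N$, uniquely determined overorder $\O'=R(J)$ of the second summand, and the class $[J]\in\Pic(\O')$ --- and the ``Steinitz exchange'' $I\oplus I'\cong\O_N\oplus II'$ for $R(I)\neq R(I')$ is a theorem of that theory, not the naive Dedekind-domain lemma. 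One small remark: the uniqueness half of your argument can even be made elementary, since $R(\Lambda)$ together with the lattice class of the determinant module $I(\Lambda)$ spanned by $\det(v,w)$ for $v,w\in\Lambda$ (which equals $\a_1\a_2$ for $\Lambda=\a_1\oplus\a_2$ and transforms by $\det M$ under $M\in\GL_2(K)$) already separates the normal forms; it is surjectivity onto the normal forms that genuinely needs Borevich--Faddeev/Bass, or the explicit class-adjustment construction carried out in the paper's lemma.
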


\begin{proof}
Let $A = E_1 \times E_2$ have CM by $K$.  Let $f_1$ and $f_2$ be such that $\End(E_i) \cong \O_{f_i}$.  We may then write $E_i \cong \C/\a_i$, where $\a_i$ is a lattice such that $R(\a_i) = \O_{f_i}$, i.e. $\a_i$ is a proper $\O_{f_i}$-ideal.  The equivalence class of $\a_i$ is uniquely determined by $E_i$.  There are in general many choices for the elliptic curves $E_i$ in the product decomposition, but the following lemma provides a particularly convenient decomposition. 

\begin{lemma}
We may choose $E_1$ and $E_2$ so that $f_2$ divides $f_1$ and $E_1$ is an arbitrary elliptic curve with $\End(E_1) = \O_{f_1}$.  Once the choice of such $E_1$ is made, $E_2$ is uniquely determined up to isomorphism.        
\end{lemma}

\begin{proof}
This follows from \cite[4.5]{SM} as we explain now (see also \cite[Cor. 66]{kaniCM}).  Let $L_1$ and $L_2$ be two lattices in $K$ with conductors $c_1$ and $c_2$.  The product $L_1L_2$ is itself a lattice and $R(L_1L_2)$ has conductor $\gcd(c_1,c_2)$.  Equivalence classes of lattices $L$ having ring of multipliers $R$ form a group under multiplication.  The content of \cite[4.5]{SM} is that 
$$\C/\a_1 \times \C/\a_2 \cong \C/L_1 \times \C/L_2$$ 
if and only if 
$$[\a_1\a_2] = [L_1L_2] \hspace{4mm} \mbox{ and }\hspace{4mm} \lcm(f_1,f_2) = \lcm(c_1,c_2).$$
Now choose $L_1$ to be any lattice of conductor $\lcm(f_1,f_2)$ and $L_2$ a lattice of conductor $(f_1,f_2)$.  Then $L_1L_2$ has conductor $(f_1,f_2)$ and so there exists a lattice $L'$ of conductor $(f_1,f_2)$ such that  $L_1L_2L' \sim \a_1\a_2$.  It follows that $A \cong \C/L_1 \times \C/L_2L'$ and this decomposition satisfies the desired divisibility property.        
\end{proof}

For any singular $A$, we can now write $A = E_1 \times E_2$ as above, with $f_2$ dividing $f_1$ and $\a_1 = \O_{f_1}$.  If we let $\a = \a_2$, and $f = f_1/f_2$, then we have just set up a bijection between isomorphism classes of singular abelian surfaces and pairs $([\a],f)$ where $[\a]$ is an equivalence class of lattices in $K$ and $f \geq 1$ is an integer.\footnote{This latter set is in bijection with $\SL_2(\Z)$-equivalence classes of positive definite binary quadratic forms, given by sending $([\a], f)$ to $fq_\a$, where $q_\a(\alpha) = \frac{\Nm(\alpha)}{\Nm(\a)}$ is the norm form on $\a$.  This recovers the version of this theorem found in \cite{SM}, where they show that the transcendental lattice of $A$ is isomorphic to $fq_\a$.   However, one can prove these results purely algebraically (and in any characeteristic) without use of Hodge theory, as is done in \cite{kaniCM}. }  
\end{proof}

For the rest of this section we let $A = A_{\a,f} = E_1 \times E_2$ be the singular abelian surface corresponding to the pair $([\a], f)$.  To ease notation we write $\O$ for $\O_{ff_\a}$, so that $E_1 = \C/\O$ and $E_2 \cong \C/\a$.  We also write $\O_\a$ for $R(\a)$.       
We have   
\[\NS(A) \cong \NS(E_1)  \oplus \Hom(E_1,E_2) \oplus \NS(E_2),\]
where $\Hom(E_1, E_2)$ embeds inside $\NS(A)$ via 
\[ \beta \mapsto X_\beta := \Gamma_\beta - h -  \deg(\beta) v\]
Thus an arbitrary $L \in \NS(A)$ can be written as $L \equiv av + X_\beta + ch$ for integers $a$ and $c$ and $\beta \in \Hom(E_1,E_2)$; recall that $v$ and $h$ are the vertical and horizontal divisors.  One checks easily that $\Hom(E_1,E_2)$ is orthogonal to both $v$ and $h$. There is also the formula 
$$(X_\beta. X_\gamma) = -\Tr(\hat\gamma\beta).$$  
It follows that the degree $d$ of $L$ is $\frac{1}{2}(L.L) = ac - \deg(\beta)$. 

As in the previous section, we attach to each $L \in \NS(A)$ the (rank 4) integral quadratic space
\[q_L : \Hom(E_1,A) \to \Z\]
\[q_L(g) = \deg(g^*L).\]
In this case, however, there is even more structure: $q_L$ endows $\Hom(E_1,A)$ with the structure of an $\O$-hermitian module.  To be more specific, we may think of $\Hom(E_1,A)$ as an $\O$-module under precomposition, and then endow this module with the following $\O$-hermitian pairing
\[\langle , \rangle_L : \Hom(E_1,A) \times \Hom(E_1,A) \to \O\] 
\[ \langle f, g\rangle_L =  \phi_{E_1}^{-1}\hat g \phi_L f \in \End(E_1)\cong \O. \]
Here, $\phi_L: A \to \hat A$ is the map induced by $L$ and $\phi_{E_1} : E \to \hat E$ the isomorphism induced by the canonical principal polarization on $E_1$.  
Note that 
\[\langle f,f\rangle_L =  \phi_{E_1}^{-1} \phi_{f^*L} = [\deg(f^*L)]  = q_L(f).\]
As an $\O$-module, we have 
\[\Hom(E_1,A) \cong  \End(E_1) \oplus \Hom(E_1,E_2) \cong \O \oplus \a.\]
We will refer to an $\O$-hermitian pairing on $\O \oplus \a$ as an $(\O,\a)$-hermitian form.  If $L = av + X_\beta + ch$ and $(x,y) \in \End(E_1) \oplus \Hom(E_1,E_2)$, then we compute 
\begin{equation}\label{hermform}
q_L(x, y) = a\deg(x) - \Tr(\hat y \beta x) + c\deg(y)
\end{equation}
Conversely, any $\O$-hermitian pairing on $\Hom(E,A)$ is of the above form, for certain $a,c \in \Q$ and 
\[\beta \in \Hom(E_1,E_2)_\Q \cong \a \otimes_\Z \Q \cong K.\]  
We say such a form is \textit{integral} if $c$ is an integer.\footnote{Note that $a$ is forced to be an integer and we show below that $\beta$ is forced to lie in $\Hom(E_1,E_2)$.}  

\begin{theorem}\label{hermbij}
The association $L \mapsto \langle \, , \, \rangle_L$ is a bijection between $\NS(A)$ and the space of integral $(\O,\a)$-hermitian forms.  Moreover:
\begin{enumerate}
\item $L$ is ample if and only if $q_L$ is positive definite.  
\item If $L$ has degree $d$ then $q_L$ has determinant $d$.  
\item $L \mapsto \langle \, , \, \rangle_L$ is $\Aut(A)$-equivariant, with $\Aut(A)$ acting on $\Hom(E_1,A)$ by composition. 
\end{enumerate}
\end{theorem}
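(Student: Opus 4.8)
The plan is to mirror the proof of Theorem \ref{bijnoncm}, deducing everything from the explicit formula (\ref{hermform}). The first task is to check that $\langle\,,\,\rangle_L$ is genuinely an $\O$-hermitian form. It is additive in each variable and, by the definition $\langle f,g\rangle_L=\phi_{E_1}^{-1}\hat g\,\phi_L f$, it is $\O$-linear in $f$ under precomposition; it is conjugate-linear in $g$ because the Rosati involution attached to the principal polarization $\phi_{E_1}$ restricts to complex conjugation on $\End(E_1)\cong\O$. Hermitian symmetry $\overline{\langle f,g\rangle_L}=\langle g,f\rangle_L$ then follows by dualizing and using that $\phi_L$ and $\phi_{E_1}$ are both symmetric. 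Thus $L\mapsto\langle\,,\,\rangle_L$ lands in the space of $\O$-hermitian forms, with $q_L(f)=\langle f,f\rangle_L$ its associated quadratic form.

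Next I would establish the bijection. Injectivity is immediate from (\ref{hermform}): evaluating $q_L$ at $(1,0)$ recovers $a$, evaluating on $(0,y)$ recovers $c$, and the cross terms recover $\beta$, so the triple $(a,\beta,c)$ — equivalently the class $L=av+X_\beta+ch$ — is determined by the form. For surjectivity I would begin with an arbitrary integral $(\O,\a)$-hermitian form, write it in the shape (\ref{hermform}) with $a,c\in\Q$ and $\beta\in K$, and show these parameters are the data of a genuine class in $\NS(A)$. Since the form is $\O$-valued, the diagonal value $a=\langle(1,0),(1,0)\rangle$ is real and lies in $\O\cap\Q=\Z$; requiring the off-diagonal values $\langle(1,0),(0,y)\rangle$ to be $\O$-valued for every $y\in\a$ forces $\beta$ to lie in $\{z\in K:z\O\subseteq\a\}=\Hom(E_1,E_2)$; and the integrality hypothesis is by definition $c\in\Z$. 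Hence $L=av+X_\beta+ch\in\NS(A)$, and (\ref{hermform}) shows $\langle\,,\,\rangle_L$ is the form we started with. This surjectivity/integrality bookkeeping — the claims recorded in the footnote to the statement, that $\O$-valuedness forces $a\in\Z$ and $\beta\in\Hom(E_1,E_2)$, and that the single condition $c\in\Z$ exactly cuts out the image — is the part that requires the most care, since it is where the arithmetic of the non-free $\O$-module $\O\oplus\a$ and of the ideal $\a$ actually enters.

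Finally I would treat the three properties. For (1), since $A=E_1\times E_2$, Proposition \ref{nakai} says $L$ is ample iff $(L.L)>0$ and $L.(h+v)>0$. As $h,v$ are orthogonal to $\Hom(E_1,E_2)$ with $h.v=1$ and $h^2=v^2=0$, one computes $L.h=a$, $L.v=c$, $(L.L)=2(ac-\deg\beta)=2d$, and $L.(h+v)=a+c$, while Sylvester's criterion says the rank-two hermitian form is positive definite iff $a>0$ and $ac-\deg\beta>0$. A short check — if $d>0$ then $ac>\deg\beta\ge0$, so $a$ and $c$ share a sign — shows the ampleness and positive-definiteness conditions coincide. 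Property (2) is the direct computation that the determinant of the hermitian form is $ac-\Nm(\beta)=ac-\deg\beta=d$. Property (3) is transport of structure: from $\phi_{\alpha^*L}=\hat\alpha\,\phi_L\,\alpha$ for $\alpha\in\Aut(A)$ one gets $\langle f,g\rangle_{\alpha^*L}=\phi_{E_1}^{-1}\widehat{\alpha g}\,\phi_L\,\alpha f=\langle\alpha f,\alpha g\rangle_L$, which is precisely the asserted equivariance for the composition action of $\Aut(A)$ on $\Hom(E_1,A)$.
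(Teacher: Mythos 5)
Your skeleton coincides with the paper's proof: property (1) via the Nakai-type criterion exactly as you describe, (2) by direct computation, (3) by the transport-of-structure identity $q_{\alpha^*L}(g)=q_L(\alpha g)$, and injectivity read off from (\ref{hermform}). The gap is at the single nontrivial point, surjectivity, where you assert rather than prove the key implication --- and the implication you state is not the one that $\O$-valuedness directly yields. The off-diagonal entry $\langle(1,0),(0,y)\rangle_L$ involves the \emph{dual} isogeny $\hat y$, and under the identifications $E_1=\C/\O$, $E_2=\C/\a$ the dual of multiplication by $y\in\a$ is multiplication by $\frac{f}{\Nm(\a)}\bar y$; this computation is the substance of the paper's proof and is absent from yours. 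Consequently $\O$-valuedness gives the condition $\frac{f}{\Nm(\a)}\,\beta\bar\a\subseteq\O$, not ``$\beta\O\subseteq\a$''. These two conditions do cut out the same lattice, but that equivalence is precisely where, in your words, ``the arithmetic of the ideal $\a$ actually enters,'' and it is not formal: one needs $\a\bar\a=\Nm(\a)\O_\a$ together with conductor bookkeeping, e.g.\ the formula $(L_1:L_2)=\frac{c_1}{(c_1,c_2)}L_1L_2^{-1}$ of \cite[Lem.~15]{kaniCM}, which give
\[
\Bigl\{\beta\in K:\tfrac{f}{\Nm(\a)}\,\beta\bar\a\subseteq\O\Bigr\}
=\tfrac{\Nm(\a)}{f}\,(\O:\bar\a)
=\tfrac{\Nm(\a)}{f}\cdot\tfrac{f}{\Nm(\a)}\,\a=\a,
\]
the conductor factor $f$ (from $c_1=ff_\a$, $c_2=f_\a$) cancelling exactly; the naive computation $(\O:\bar\a)=\O\bar\a^{-1}$ would produce the wrong lattice $\frac{1}{f}\a$. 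Flagging this step as ``the part that requires the most care'' does not discharge it: it is the whole content of the paper's proof.

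The same dropped dualization factor makes your step (2) incorrect as written: for $\beta\in\a$ one has $\deg(\beta)=\frac{f}{\Nm(\a)}\Nm(\beta)$, so the chain $ac-\Nm(\beta)=ac-\deg\beta$ fails unless $f=\Nm(\a)$. The determinant of $\langle\,,\,\rangle_L$ must be computed relative to the module structure of $\O\oplus\a$ (equivalently, the Gram determinant normalized by $\Nm(\a)/f$), after which it does equal $ac-\deg(\beta)=d$ as the theorem asserts. Everything else in your write-up --- hermitian symmetry via the Rosati involution, positive definiteness via Sylvester plus the sign check on $a$ and $c$, the equivariance computation from $\phi_{\alpha^*L}=\hat\alpha\,\phi_L\,\alpha$ --- is correct and matches the paper; once you compute $\hat y$ explicitly and verify the displayed lattice identity, your argument becomes the paper's.
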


\begin{proof}
$L$ is ample if and only if $(L.L) > 0$ and $L.(h+v) > 0$ if and only if $a$, $c$, and $d$ are all positive.  As with binary quadratic forms, this is the case if and only if $q_L$ is positive definite.  Property (2) is a straightforward computation.  Property (3) follows from the fact that
\[q_{\alpha^*L}(g) = \deg(g^*\alpha^*L) = \deg((\alpha g)^*L) = q_L(\alpha g).\]  

The last thing to prove is that every integral binary $\O$-hermitian pairing on $V = \Hom(E_1,A) \cong \O \oplus \a$ arises from some $L \in \NS(A)$.  Since the coefficients $a,c$ in equation (\ref{hermform}) are integral, it suffices to check that $\beta \in \a$ for any $(\O,\a)$-hermitian form.  Using our identifications $E_1 = \C/\O$ and $E_2 = \C/\a$,  the dual of the multiplication by $v \in \a$ map $\C/\O \to \C/\a$ is the map $\hat v: \C/\a \to \C/\O$, given by multiplication by $\frac{f}{\Nm(\a)}\bar v$.  Thus $\beta$ needs to satisfy $\beta\frac{f}{\Nm(\a)}\bar \a \subset \O$, i.e. $\beta \in \a$.    
\end{proof}

\begin{lemma}
For $A$ as above, we have 
\[\End(A) \cong \End_\O(\O\oplus \a) =   \left(\begin{array}{cc}\O & f \a^{-1}\\ \a&\O_\a \end{array}\right).  \]
and hence
\[\Aut(A) \cong \left\{ M  \in \left(\begin{array}{cc}\O & f \a^{-1}\\ \a&\O_\a \end{array}\right) : \det M \in \O_\a^\times\right\} \]
\end{lemma}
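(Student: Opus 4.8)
The plan is to realize $\End(A)$ as a $2\times 2$ matrix ring of homomorphism groups between the two factors $E_1 = \C/\O$ and $E_2 \cong \C/\a$, compute each entry as an explicit fractional ideal of $K$, and then read off $\Aut(A)$ as the group of degree-one isogenies. Concretely, sending $\psi \in \End(A)$ to the matrix $(p_i \circ \psi \circ i_j)_{i,j}$, where the $i_j$ and $p_i$ are the structural inclusions and projections of the product, identifies $\End(A)$ with the matrix of Hom-groups $\begin{pmatrix}\End(E_1) & \Hom(E_2,E_1)\\ \Hom(E_1,E_2) & \End(E_2)\end{pmatrix}$, composition corresponding to matrix multiplication. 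Under $E_1 = \C/\O$, $E_2 = \C/\a$, a homomorphism $\C/L \to \C/L'$ is multiplication by a scalar $x \in K$ with $xL \subseteq L'$, so every entry is a colon ideal. Here $\End(E_1) = \O$ and $\End(E_2) = R(\a) = \O_\a$ are immediate, and $\Hom(E_1,E_2) = \{x : x\O \subseteq \a\} = \a$ follows because $\O \subseteq \O_\a$ forces $x\O \subseteq \a\O \subseteq \a\O_\a = \a$ for $x\in\a$.

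The one substantive computation, which I expect to be the main obstacle, is the remaining off-diagonal entry $\Hom(E_2,E_1) = \{x \in K : x\a \subseteq \O\} = (\O:\a)$, which I claim equals $f\a^{-1}$ with $\a^{-1} = (\O_\a:\a)$. First I would record the two facts driving everything: $[\O_\a:\O] = f$, and the conductor of $\O$ in $\O_\a$ is $(\O:\O_\a) = f\O_\a$ (a one-line calculation with adapted $\Z$-bases of $\O \subseteq \O_\a$). Granting that the proper $\O_\a$-ideal $\a$ is invertible, so $\a\a^{-1} = \O_\a$, the inclusion $f\a^{-1} \subseteq (\O:\a)$ is clear from $\a(f\a^{-1}) = f\O_\a \subseteq \O$. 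For the reverse inclusion, if $x\a \subseteq \O$ then $x\a$ is an $\O_\a$-submodule of $\O$; but any $\O_\a$-submodule $M \subseteq \O$ satisfies $M = \O_\a M \subseteq (\O:\O_\a) = f\O_\a$, whence $x\a \subseteq f\O_\a$ and $x \in f\a^{-1}$. This yields $\End(A) = \begin{pmatrix}\O & f\a^{-1}\\ \a & \O_\a\end{pmatrix}$, and the identification with $\End_\O(\O\oplus\a)$ follows from the identical colon-ideal computation for $\Hom_\O$ (each $\O$-linear map of rank-one $\O$-lattices is multiplication by a scalar, since $\O\otimes_\Z\Q = K$), or equivalently from full faithfulness of $\Hom(E_1,-)$ together with $\Hom(E_1,A)\cong \O\oplus\a$ established above.

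For the automorphism group I would use that $\Aut(A)$ is exactly the set of degree-one isogenies, i.e.\ the unit group of $\End(A)$. Writing $M = \begin{pmatrix}\alpha & \beta\\ \gamma & \delta\end{pmatrix}$ with $\alpha\in\O$, $\beta \in f\a^{-1}$, $\gamma\in\a$, $\delta\in\O_\a$, the entries multiply into $\O_\a$, so $\det M = \alpha\delta - \beta\gamma \in \O\O_\a + (f\a^{-1})\a = \O_\a$. Passing to the analytic representation of $A = \C^2/\Lambda$, the endomorphism $M$ acts $\C$-linearly by the same complex matrix, so its complex determinant is $\det M \in K$ and $\deg M = \Nm(\det M) = (\det M)\overline{(\det M)}$. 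Hence $M$ is an automorphism iff $\deg M = 1$ iff $\Nm(\det M) = 1$; since $\O_\a$ is stable under complex conjugation, an element of $\O_\a$ has norm $1$ precisely when it is a unit, giving $\Aut(A) = \{M \in \End(A) : \det M \in \O_\a^\times\}$. The only points needing care are that a degree-one isogeny genuinely has its inverse in $\End(A)$ (automatic, since the inverse isogeny again has degree one) and that the complex analytic determinant coincides with the algebraic $\det M$ computed in $K$; both are routine once the matrix description is in hand.
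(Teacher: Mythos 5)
Your proof is correct and follows essentially the same route as the paper: identify $\End(A)$ with the $2\times 2$ matrix of Hom-groups between $\C/\O$ and $\C/\a$, and compute each entry as a colon lattice in $K$. The only real difference is at the one nontrivial entry: where you prove $(\O:\a)=f\a^{-1}$ directly from the conductor identity $(\O:\O_\a)=f\O_\a$ together with invertibility of the proper $\O_\a$-ideal $\a$, the paper simply quotes Kani's general lattice formula $(L_1:L_2)=\frac{c_1}{(c_1,c_2)}L_1L_2^{-1}$, so your argument is a correct self-contained proof of exactly the special case needed. Your degree-equals-$\Nm(\det M)$ justification of the description of $\Aut(A)$ is also sound, and rightly proceeds via the degree rather than an entrywise adjugate computation (which would not visibly land in $\End(A)$, since $(\det M)^{-1}\delta$ is only obviously in $\O_\a$, not $\O$); the paper states this last step without proof.
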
     

\begin{proof}
The lattice in $\C^2$ defining $A$ is $\O \oplus \a$, so the first isomorphism is clear.  An element of $\End(A)$ can be written as a matrix 
\[\left(\begin{array}{cc} \alpha & \beta \\ \gamma & \delta \end{array}\right)\]
with $\alpha \in \End(E_1)$, $\beta \in \Hom(E_2, E_1)$, $\gamma \in \Hom(E_1,E_2)$, $\delta \in \End(E_2)$.  We may identify $\Hom(E_2,E_1)$, for example, with the lattice  
$$(\O : \a) = \{x \in K: x\a \subset \O\}.$$
For any lattice $L_1$, $L_2$ of conductors $c_1$ and $c_2$, one has the general formula \cite[Lem. 15]{kaniCM}
$$(L_1 : L_2) = \frac{c_1}{(c_1,c_2)} L_1L_2^{-1}.$$
Setting $f = f_1/f_2$, it follows that $\alpha \in \O$, $\beta \in (\O : \a) = f \a^{-1}$, $\gamma \in \a$, and $\delta \in \O_\a$, i.e.
$$\End(A) = \End(\O\oplus \a) =   \left(\begin{array}{cc}\O & f \a^{-1}\\ \a&\O_\a \end{array}\right).$$
\end{proof}

Let us write $\Gamma = \Gamma(\a,f)$ for $\GL_\O(\O \oplus \a)$, so that we can identify $\Aut(A)$ with $\Gamma$ as above.  Moreover, $\Gamma$ acts naturally on the space of integral $(\O,\a)$-hermitian forms. 

\begin{corollary}
The number $N(A,d)$ of polarizations on $A$ of degree $d$ is equal to the number of $\Gamma$-equivalence classes of integral positive definite $(\O,\a)$-hermitian forms of discriminant $d$.  
\end{corollary}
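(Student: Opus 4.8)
The plan is to combine Theorem~\ref{hermbij} with the definition of $N(A,d)$, so that this corollary becomes an essentially formal consequence of the bijection already established. First I would recall that, by definition, $N(A,d)$ counts $\Aut(A)$-equivalence classes of polarizations $L \in \NS(A)$ of degree $d$, where a polarization is the class of an ample line bundle and two polarizations $L, M$ are equivalent when $L = \alpha^*M$ for some $\alpha \in \Aut(A)$.

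Next I would invoke the bijection $L \mapsto \langle\,,\,\rangle_L$ of Theorem~\ref{hermbij} between $\NS(A)$ and the space of integral $(\O,\a)$-hermitian forms. Property~(1) of that theorem says $L$ is ample precisely when $q_L$ is positive definite, so the bijection restricts to one between the ample classes in $\NS(A)$ and the positive definite integral $(\O,\a)$-hermitian forms. Property~(2) says that the degree of $L$ equals the determinant of $q_L$, so this restricted bijection further matches ample classes of degree $d$ with positive definite forms of determinant (discriminant) $d$.

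The final point is the compatibility with the two equivalence relations. By property~(3), the association $L \mapsto \langle\,,\,\rangle_L$ is $\Aut(A)$-equivariant, where $\Aut(A)$ acts on $\NS(A)$ by pullback and on $\Hom(E_1,A)$ by composition; and we have identified $\Aut(A)$ with $\Gamma$ as in the preceding lemma. Hence the bijection descends to a bijection of quotient sets, identifying $\Aut(A)$-equivalence classes of positive definite integral forms of degree $d$ with $\Gamma$-equivalence classes of positive definite integral $(\O,\a)$-hermitian forms of discriminant $d$. Counting on both sides then gives exactly the claimed equality for $N(A,d)$.

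Since every ingredient is already packaged in Theorem~\ref{hermbij} and the identification $\Aut(A) \cong \Gamma$, there is no genuine obstacle here; the only thing requiring a sentence of care is verifying that an $\Aut(A)$-equivariant bijection indeed induces a bijection on the respective orbit spaces, which is immediate since equivariance sends orbits to orbits and the map is a bijection. Thus the proof is a short assembly of the three properties rather than any new computation.
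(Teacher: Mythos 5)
Your proposal is correct and is exactly the argument the paper intends: the corollary is stated without proof precisely because it follows formally from Theorem \ref{hermbij} (properties (1)--(3)) together with the identification $\Aut(A) \cong \Gamma$ from the preceding lemma. Your one point of care---that an equivariant bijection descends to orbit spaces---is the right thing to check and is indeed immediate.
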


\begin{remark}
Just as in the non-CM case, isomorphism classes of $\O$-hermitian modules of fixed discriminant are in bijection with class groups of certain rings.  It is not surprising  (from the point of view of quadratic forms and also the point of view of moduli of abelian surfaces) that the relevant rings in the CM case are orders in definite quaternion algebras over $\Q$.  \textit{Unlike} in the non-CM case, these quaternionic class numbers can be computed explicitly using methods of Eichler.  The hermitian correspondence  we allude to goes back to Brandt and Lattimer in certain special cases, but a complete correspondence has not been fleshed out in the literature.  We plan to elaborate on this in a separate paper and to count (smooth, very ample, etc.) polarizations on singular abelian surfaces.         
\end{remark}

\begin{proposition}
The set of $\Aut(A)$-equivalence classes of elliptic curves on $A$ is in bijection with the orbits of $\Gamma$ acting by linear transformation of variable on $\P(\O \oplus \a) = \P^1(K)$.  
\end{proposition}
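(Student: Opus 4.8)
The plan is to set up a bijection in two stages, mirroring the structure already established in Theorem~\ref{hermbij} and Proposition~\ref{kani3}. First I would use the characterization of Proposition~\ref{kani3}: a class $L \in \NS(A)$ is the class of an elliptic curve $E \subset A$ if and only if $L$ is indivisible, $(L.L) = 0$, and $L.H > 0$ for some ample $H$. Under the bijection $L \mapsto q_L$ of Theorem~\ref{hermbij}, the condition $(L.L) = 0$ translates to $\det(q_L) = 0$ by property~(2), and the positivity condition $L.H > 0$ corresponds to $q_L$ being positive \emph{semi}definite but nonzero. Thus elliptic curves on $A$ correspond to indivisible integral $(\O,\a)$-hermitian forms of determinant $0$ that are positive semidefinite and nonzero.

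The key observation is that a rank-$2$ positive semidefinite $\O$-hermitian form of determinant $0$ has a one-dimensional radical, i.e.\ its kernel is a line in $V_\Q = (\O \oplus \a) \otimes_\Z \Q \cong K^2$. Concretely, a positive semidefinite form $\langle\,,\,\rangle$ of determinant zero on $V$ is a perfect square: it factors as $q(w) = \Nm(\ell(w))$ for a $K$-linear functional $\ell : V_\Q \to K$, and the degenerate direction is $\ker \ell$, a point of $\P^1(K) = \P(\O \oplus \a)$. I would make this explicit by showing that each elliptic curve $E \subset A$, viewed via an isogeny $g : E_1 \to A$ (or rather via the induced line in $\Hom(E_1,A)$), determines its class in $\P^1(K)$, and that this assignment is exactly recording the isotropic line of $q_L$. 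The indivisibility of $L$ should correspond precisely to choosing the line as a point of $\P^1(K)$ rather than a scaled sublattice, so that there is no over- or under-counting.

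Next I would establish the $\Aut(A)$-equivariance. By property~(3) of Theorem~\ref{hermbij}, $\Gamma = \Aut(A)$ acts on the space of hermitian forms compatibly with its action on $\NS(A)$; I must check that passing to the isotropic line intertwines this with the natural linear action of $\Gamma$ on $\P(\O \oplus \a)$. Since the radical of $\alpha^* q_L = q_L(\alpha \cdot -)$ is $\alpha^{-1}$ applied to the radical of $q_L$, this is a formal transport-of-structure statement, and equivariance follows. Therefore the bijection between elliptic curves and positive semidefinite degenerate indivisible forms descends to a bijection between $\Aut(A)$-orbits of elliptic curves and $\Gamma$-orbits on $\P^1(K)$.

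The main obstacle I anticipate is the careful bookkeeping of integrality and indivisibility when passing from a point of $\P^1(K)$ back to an actual elliptic curve subgroup (surjectivity), as opposed to the easier forward direction. Given a line in $\P^1(K)$, one must produce a genuine \emph{indivisible} class $L \in \NS(A)$ whose form is semidefinite of determinant $0$ with that radical, and verify it is represented by an elliptic curve rather than a multiple; this is where one uses the lattice structure of $\O \oplus \a$ and the description of $\End(A)$ from the preceding lemma to pin down which rescaling of the naive square form lands in the integral indivisible class. I expect the subtlety to be essentially the same one that appears in the non-CM analogue (Proposition~\ref{ECs}), where the matrix-content of $q_L$ must be tracked, and I would model the argument on that computation.
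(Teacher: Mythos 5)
Your proposal matches the paper's proof: the paper likewise combines Theorem~\ref{hermbij} with Proposition~\ref{kani3} to identify elliptic curve classes with primitive positive semidefinite $(\O,\a)$-hermitian forms of discriminant $0$, observes that such forms take the shape $\Nm(v\alpha - u\gamma)$ with $u \in \O$, $v \in \a$ (your factorization $q = \Nm \circ \ell$, with $(u:v)$ the radical), and sends the point $P = (u:v) \in \P^1(K)$ to the elliptic curve $E_P = \mathrm{im}\bigl(z \mapsto (uz, vz)\bigr)$, which settles the surjectivity bookkeeping you flagged. The equivariance you spell out is, as you note, immediate from Theorem~\ref{hermbij}(3), so your argument is correct and essentially identical to the paper's.
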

\begin{proof}
By Theorem \ref{hermbij} and Proposition \ref{kani3}, classes of elliptic curves are in bijection with $\Gamma$-equivalence classes of non-negative primitive $(\O,\a)$-hermitian forms of discriminant 0.  After choosing a basis, such forms take the shape $\Nm(v\alpha  - u \gamma)$, for some coefficients $u \in \O$ and $v \in \a$ and variables $(\alpha, \gamma) \in \O \oplus \a$.  Thus $\Ell(A)$ is in bijection with $\Gamma$-orbits of $\P(\O \oplus \a) = \P^1(K)$.  Explicitly, the point $P = (u:v)$ corresponds to the elliptic curve $E_P$ which is the image of the morphism $\phi_{u,v}: \C/\O \to \C/\O \times \C/\a$ given by $z \mapsto (uz, vz)$. 
\end{proof}

\begin{example}
Consider the case $f = 1$ and $\a$ is an $\O_K$-ideal.  So $A = E_1 \times E_2$ is a product of two elliptic curves with complex multiplication by $\O_K$.  Then there are $h_K$ different $\Aut(A)$-equivalence classes of elliptic curves on $A$, where $h_K$ is the class number of $K$.  If $\a = \b^2$ is a square in the class group, then $A$ is isomorphic to $\C/\b \times \C/b$, hence $\Gamma \cong \GL_2(\O_K)$.  The fact that $\#\GL_2(\O_K) \backslash \P^1(K) = h_K$ is due to Bianchi.  If $\a$ is not a square in the class group, then we adopt a different approach which should apply more generally.  One first notes that any elliptic curve $E \subset A$ necessarily has CM by $\O_K$, and similarly for the quotient $A/E$.  Thus, such an elliptic curve determines an extension
\[0 \to \b \to \O\oplus \a \to \c \to 0\]
of $\O_K$-modules, where $E \cong \C/\b$ and $[\c] = [\a\b^{-1}]$ in $\Pic(\O_K)$.  Conversely, any extension of this form gives rise to an elliptic curve $\C/\b \subset A$, and two elliptic curves on $A$ are $\Aut(A)$-equivalent if and only if the corresponding extensions give the same class in $\Ext^1_{\O_K}(\c,\b)$.  But $\Ext^1_{\O_K}(\c,\b) = 0$ since $\b$ and $\c$ are projective, so there is a single elliptic curve for each ideal class $\b$.  
\end{example}

In the general case of arbitrary $f \geq 1$ and lattice $\a$, there should be a reasonable formula for $\#\Gamma(\a,f) \backslash \P^1(K)$ in terms of class numbers of certain orders, analogous to Proposition \ref{ECs}.  But for an arbitrary order $\O$ and arbitrary torsion-free $\O$-ideals $\b,\c$, the group $\Ext^1_\O(\b,\c)$ is non-zero, which complicates the approach given in the previous example.

%SMOOTH POLARIZATIONS
\section{Smooth polarizations on $A_m$}\label{smooth}

In this section we return to the non-CM case discussed in Section \ref{param} and determine the number $N_\sm(A_m,d)$ of smooth polarizations on $A_m$ of degree $d$ and also give a proof of Theorem \ref{mainsmooth}.  By Corollary \ref{polcount}, it suffices to identify the non-smooth (or reducible) polarizations of degree $d$.  By Proposition \ref{crit}, this is equivalent to counting the product polarizations on $A_m$ of the form 
$$L = dh + v$$ with respect to some product decomposition $A_m = E_1 \times E_2$ of $A_m$. 
\begin{corollary}\label{nonsm}
Let $\omega(m)$ be the number of distinct primes dividing $m$.  Then the number of non-smooth polarizations on $A_m = E\times E'$ of degree $d$ is equal to
$$\begin{cases}
1 & \mbox{if } m =1\\
2^{\omega(m) - 1} & \mbox{if } m > d = 1\\
2^{\omega(m)} & \mbox{if } m > 1 \mbox{ and } d > 1.
\end{cases}$$
\end{corollary}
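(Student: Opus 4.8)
The plan is to combine the classification of non-smooth polarizations (Proposition \ref{crit}) with the classification of product decompositions of $A_m$ (Proposition \ref{redpp}), and then to separate the resulting candidates into distinct $\Aut(A_m)$-orbits by means of an intersection-theoretic invariant. First I would set up the reduction. By Proposition \ref{crit}, a polarization $L$ of degree $d$ is non-smooth if and only if $(A_m,L)\cong(F_1\times F_2,\,dh+v)$ for some elliptic curves $F_1,F_2$, where $h=[F_1\times 0]$ and $v=[0\times F_2]$ (this is condition (2), up to interchanging the two factors); in this model the axis $F_1\times 0$ meets $L$ exactly once. By Proposition \ref{redpp}, every such decomposition is $\Aut(A_m)$-equivalent to one of the decompositions $A_m\cong E_k\times E_{k'}$, $k'=m/k$, indexed by the unitary divisors $k\mid m$ (those with $(k,m/k)=1$), of which there are $2^{\omega(m)}$. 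Equivalently, writing $L_0=dh+v$ for the standard decomposition and using the Atkin--Lehner operators $w_k$ (which satisfy $w_k(h)=h_k$, $w_k(v)=v_k$ by \eqref{atkin}), every non-smooth polarization of degree $d$ is $\Aut(A_m)$-equivalent to some $w_k(L_0)=dh_k+v_k$. This yields the upper bound $2^{\omega(m)}$, and I would record the two collapses giving the other cases: when $m=1$ there is a single decomposition and $\Aut(A_1)=\GL_2(\Z)$ contains the factor-swap, so $dh+v\sim h+dv$ and there is exactly one class; and when $d=1$ the class $h_k+v_k$ is symmetric, so $E_k\times E_{k'}$ and $E_{k'}\times E_k$ give the same polarization, leaving $2^{\omega(m)-1}$ candidates.

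Next I would prove the matching lower bound by producing a complete invariant, treating $d\ge 2$ first. For $d\ge 2$ a Hodge-index argument identical to Lemma \ref{unique} shows there is a \emph{unique} elliptic curve $E_L\subset A_m$ with $E_L.L=1$: if $E,F$ both satisfy this, then $4d(E.F)=L^2(E+F)^2\le (L.(E+F))^2=4$, so $d(E.F)\le 1$, forcing $E.F=0$ and hence $E=F$ by Lemma \ref{disjointEC}. The $\Aut(A_m)$-orbit of $E_L$ is therefore an invariant of $[L]$. For $L=w_k(L_0)=dh_k+v_k$ this distinguished curve is the axis $E_k\times 0\cong E_k$. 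Since $k$ is a unitary divisor we have $f_k=(k,m/k)=1$, so by Proposition \ref{ECs} the elliptic curves of type $E_k$ form a single $\Aut(A_m)$-orbit, and distinct unitary divisors index distinct orbits. Hence the $2^{\omega(m)}$ classes $w_k(L_0)$ are pairwise inequivalent, giving the count $2^{\omega(m)}$ when $m,d>1$ (and the count $1$ when $m=1$, where there is only one unitary divisor).

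Finally I would dispatch the principal case $d=1$. Here $E_L$ is no longer unique, but for a reducible principal polarization $L=h_k+v_k$ one checks via Lemma \ref{disjointEC} that the two axes $E_k,E_{k'}$ are \emph{exactly} the elliptic curves meeting $L$ once: any further elliptic curve is distinct from both axes, hence meets each positively, hence meets $L$ at least twice. Thus the unordered pair of orbits $\{[E_k],[E_{k'}]\}$ is a complete invariant, distinct unordered unitary factorizations $\{k,k'\}$ give distinct invariants, and there are $2^{\omega(m)-1}$ of these for $m>1$ and a single one for $m=1$, matching the stated formula.

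I expect the main obstacle to be the distinctness step for $d\ge 2$, namely being sure that ``type $E_k$'' is genuinely an $\Aut(A_m)$-orbit invariant and that distinct unitary divisors yield distinct orbits. This is precisely what Proposition \ref{ECs} supplies, through the bijection between $\Aut(A_m)$-classes of elliptic curves and $\Gamma_0(m)\backslash\P^1(\Q)$ together with the per-type orbit count $|(\Z/f_k\Z)^\times/\{\pm1\}|$; so the whole argument hinges on reading off that proposition correctly. By contrast, the Hodge-index uniqueness of $E_L$ and the surjectivity coming from Propositions \ref{crit} and \ref{redpp} are routine once the bounds are computed.
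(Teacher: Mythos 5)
Your proposal is correct and takes essentially the same route as the paper: reduce via Proposition \ref{crit} to the product polarizations $dh_k + v_k$ attached to the $2^{\omega(m)}$ decompositions $A_m \cong E_k \times E_{k'}$ of Proposition \ref{redpp}, then halve the count when $d = 1$ because the order of the factors is immaterial. The only difference is that you spell out the pairwise inequivalence of these classes for $d \geq 2$ (via the Hodge-index uniqueness of the elliptic curve meeting $L$ once, together with Proposition \ref{ECs}), a verification the paper's one-line proof leaves implicit, and your version of that step is sound.
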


\begin{proof}
This follows from Proposition \ref{redpp}.  The non-smooth polarizations of degree $d$ are the product polarizations as in Proposition \ref{crit} coming from the $2^{\omega(m)}$ different product decompositions $A_m \cong E_k \times E_{k'}$.  If $d = 1$, then the order of the product does not matter, giving half as many polarizations.   
\end{proof}

The corollary above simply counts the non-smooth polarizations. We can be more precise and identify the image of the non-smooth polarizations under the map $\Psi_{m,d}$.   Unless otherwise stated, we assume for the rest of the section that $d$ is squarefree.

\begin{proposition}\label{2t}
Let $L_k$ be the product polarization $d h_k + v_k$, where $h_k, v_k$ are axes with respect to a decomposition $A_m \cong E_k \times E_{k'}$.  Then under the composition
$$\Psi_{m,d} : \Aut(A_m)\backslash \NS(A_m)_d^\amp \longrightarrow \Gamma_0(m)\backslash V_{m,d} \to \coprod_{g | (m,d)} \GL_2(\Z) \backslash V^\mprim_{-4md/g^2}.$$
$L_k$ is sent to the $2$-torsion class $[f_k] = \frac{1}{g}[k,0,k'd] \in \Cl(-4md/g^2)$, where $g = (k,d)$.  
\end{proposition}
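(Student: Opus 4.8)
The plan is to avoid computing the $\{h,X_\l,v\}$-coordinates of $h_k,v_k$ directly and instead exploit the $W(m)$-equivariance of $\Psi_{m,d}$ recorded in Corollary~\ref{equiv}. The key observation is that $L_k$ is nothing but the image under the Atkin--Lehner operator $w_k$ of the single ``base'' non-smooth polarization $L_1 = dh + v$ attached to the original decomposition $A_m = E \times E'$, and that $\Psi_{m,d}(L_1)$ is the principal class. Once this is in place, the equivariance formula degenerates immediately to $\Psi_{m,d}(L_k) = [f_k]$.

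First I would record the relevant data of $L_1 = dh + v$. In the basis $\{h, X_\l, v\}$ its coordinates are $(a,b,c) = (d,0,1)$, so $L_1$ is ample of degree $d$ by Lemma~\ref{ample}, and its quadratic form is $q_{L_1} = [dm,0,1]$. This form represents $1$ and has content $\gcd(dm,0,1)=1$, so it is primitive and $\GL_2(\Z)$-equivalent to the principal form $[1,0,md]$; since also $(c,m)=1$ here, the image $\Psi_{m,d}(L_1) = [q_{L_1}]$ is the identity class of $\Cl(-4md)$. Next, by the linearity of $w_k$ together with the formulas $w_k(h) = h_k$, $w_k(v) = v_k$ of \eqref{atkin}, we get $w_k(L_1) = d\,w_k(h) + w_k(v) = d h_k + v_k = L_k$. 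Applying the first identity of Corollary~\ref{equiv} with $L = L_1$ (valid because $q_{L_1}$ is primitive and $(k,k')=1$), and writing $g = (k,d)$, gives
\[
\Psi_{m,d}(L_k) = \Psi_{m,d}(w_k(L_1)) = e_g\!\bigl([q_{L_1}]\bigr)\cdot [f_k] \in \Cl(-4md/g^2).
\]
As $e_g$ is a homomorphism and $[q_{L_1}]$ is the identity, $e_g([q_{L_1}])$ is the identity, and the equation collapses to $\Psi_{m,d}(L_k) = [f_k] = \tfrac1g[k,0,k'd]$, as claimed.

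The one point deserving care is bookkeeping about which case of Corollary~\ref{equiv} is in force: one must check that $q_{L_1}$ is genuinely primitive, not merely matrix-primitive, so that the target is $\Cl(-4md/g^2)$ (via the first identity, with the class $[f_k]$) rather than $\Cl(-md/g^2)$ (via the second, with $[g_k]$). This is immediate from $\gcd(dm,0,1)=1$, but it is exactly the distinction one must watch for odd degrees, so I would state it explicitly. That the resulting class is $2$-torsion is seen directly: $\tfrac1g[k,0,k'd] = [k/g,\,0,\,k'd/g]$ has vanishing middle coefficient, hence is an ambiguous form equal to its own inverse. As a sanity check the case $k=1$ works out by hand ($g=1$, $w_1 = \id$, $f_1 = [1,0,md]$, and both sides are principal). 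A self-contained alternative would instead compute the $\{h,X_\l,v\}$-coordinates of $h_k$ and $v_k$ from the explicit isomorphism $A_m \cong E_k \times E_{k'}$ of Proposition~\ref{redpp} and evaluate $q_{L_k}$ head-on, but this is appreciably messier and the equivariance route renders it unnecessary.
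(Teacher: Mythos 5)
Your proof is correct and is essentially the paper's own argument: the paper likewise writes $L_k = w_k(L_1)$ with $L_1 = dh+v$, notes $q_{L_1} = [md,0,1]$ is primitive and represents $1$ so $\Psi_{m,d}(L_1)$ is trivial, and concludes via Corollary \ref{equiv} that $\Psi_{m,d}(L_k) = e_g(\Psi_{m,d}(L_1))\cdot[f_k] = [f_k]$. Your added care about primitivity versus matrix-primitivity (hence $\Cl(-4md/g^2)$ rather than $\Cl(-md/g^2)$) is a correct and worthwhile explicit check that the paper leaves implicit.
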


\begin{proof}
Up to $\Aut(A_m)$-equivalence, we have $L_k = dh_k + v_k = w_k(L)$, where $L = L_1 = dh + v$.  Now, $q_L = [md, 0,1]$ is primitive and represents 1, so $\Psi_{m,d}(L)$ is the trivial class in $\Cl(-4md)$ (see Lemma \ref{reps}).  By Corollary \ref{equiv}, we have
$$\Psi_{m,d}(L_k) = \Psi_{m,d}(w_k(L)) = e_g(\Psi_{m,d}(L))\cdot [f_k] = [f_k],$$
as desired.    
\end{proof}

Theorem \ref{bijnoncm}, Proposition \ref{formsmap} and Corollary \ref{nonsm} give an explicit formula for $N_\sm(A_m,d)$ when $d$ is squarefree.  For general squarefree $d$ the formula is a little complicated, so we write down the formula only for $d$ prime to $m$. 

\begin{corollary}
Let $A_m = E \times E'$ with $E \to E'$ a cyclic isogeny of degree $m$ and $\End(E) = \Z$.  Suppose $d > 1$ is squarefree and prime to $m$.  Then
$$N_\sm(A_m, d) =  \begin{cases}
h^+(-4md) - 2^{\omega(m)}  &  md \mbox{ even}\\
h^+(-4md) + h^+(-md) - 2^{\omega(m)} & md \mbox{ odd},
\end{cases}$$
and if $m > 1$, then
$$N_\sm(A_m, 1) =  \begin{cases}
h^+(-4m) - 2^{\omega(m) - 1}  &  m \mbox{ even}\\
h^+(-4m) + h^+(-m) - 2^{\omega(m) - 1} & m \mbox{ odd}.
\end{cases}$$
Here $h^+(D) = \#\Cl(D)$.  
\end{corollary}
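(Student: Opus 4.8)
The plan is to read off $N_\sm(A_m,d)$ by subtracting the non-smooth polarizations from the total count, both of which are already in hand. First I would recall that smoothness is an invariant of $\Aut(A_m)$-equivalence classes: by Proposition \ref{crit}, a polarization of degree $d$ fails to be smooth exactly when it is a product (reducible) polarization $dh_k + v_k$ relative to some decomposition $A_m \cong E_k \times E_{k'}$, and condition (3) there shows this is a numerical condition, hence constant on each class. Thus every class in $\NS(A_m)^\amp_d$ is either smooth or non-smooth, and since each product polarization is itself an ample class of degree $d$ (Lemma \ref{ample}), it lies among the $N(A_m,d)$ classes of Corollary \ref{polcount}. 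Consequently
$$N_\sm(A_m,d) = N(A_m,d) - \#\{\mbox{non-smooth classes of degree } d\},$$
and the two displayed formulas will follow by inserting the values of $N(A_m,d)$ from Corollary \ref{polcount} and of the non-smooth count from Corollary \ref{nonsm}, keeping track of the parity of $md$.

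The one point I would make sure to justify is that the number in Corollary \ref{nonsm} genuinely counts \emph{distinct} classes sitting inside the total, so that the subtraction is legitimate. Since $(m,d)=1$ and $d$ is squarefree, Proposition \ref{formsmap} makes $\Psi_{m,d}$ a bijection onto $\GL_2(\Z)\backslash V^\mprim_{-4md}$, so it suffices to track the images of the product polarizations. By Proposition \ref{2t}, the one attached to $k \mid m$ maps to the $2$-torsion class $[f_k] = [k,0,k'd]$ (here $g=(k,d)=1$). For $d>1$ these $[f_k]$ form a subgroup isomorphic to $(\Z/2\Z)^{\omega(m)}$ in $\Cl(-4md)$, hence are $2^{\omega(m)}$ distinct classes, and subtracting $2^{\omega(m)}$ from $N(A_m,d)$ yields the first formula.

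The mildly delicate case, and really the only thing needing attention, is the factor-of-two drop when $d=1$. There the forms $f_k = [k,0,k']$ and $f_{k'} = [k',0,k]$ of the complementary decompositions $E_k\times E_{k'}$ and $E_{k'}\times E_k$ are interchanged by the variable swap $\left(\begin{smallmatrix} 0 & 1 \\ 1 & 0 \end{smallmatrix}\right) \in \GL_2(\Z)$, so $[f_k] = [f_{k'}]$; geometrically this is just the fact that for a principal polarization the ordering of the two factors is irrelevant. (For $d>1$ the analogous swap of $[k,0,k'd]$ is $[k'd,0,k]$, which is the class of $f_k$ itself, so no collapse occurs.) Hence for $d=1$ the assignment $k \mapsto [f_k]$ is two-to-one, giving only $2^{\omega(m)-1}$ distinct non-smooth classes, matching Corollary \ref{nonsm} and producing the second formula. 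I do not expect any serious obstacle: the substantive work was done in Corollaries \ref{polcount} and \ref{nonsm}, and what remains is the bookkeeping above together with the routine matching of the $md$ even/odd cases.
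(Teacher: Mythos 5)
Your proposal is correct and takes essentially the same route as the paper, whose proof of this corollary is precisely the subtraction of the non-smooth count of Corollary \ref{nonsm} from the total count of Corollary \ref{polcount} (via the bijection of Proposition \ref{formsmap}), with the halving at $d=1$ coming from the irrelevance of the factor ordering. Your additional verification that the non-smooth classes are pairwise distinct---tracking their $2$-torsion images $[f_k]$ under $\Psi_{m,d}$ via Proposition \ref{2t} and noting that the swap $[k,0,k'd]\mapsto[k'd,0,k]$ only causes collapse when $d=1$---is a sound double-check of what Corollary \ref{nonsm} already establishes geometrically through Proposition \ref{redpp}.
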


Restricting to primes $d$ such that $(m,d) = 1$, we have the following more useful result.
\begin{corollary}\label{dprime}
Let $A_m = E \times E'$ with $E \to E'$ a cyclic isogeny of degree $m$ and $\End(E) = \Z$.  Suppose $d$ is a prime not dividing $m$ or that $d = 1$.  Then
$$N_\sm(A_m, d) =  \begin{cases}
0 & m = d= 1\\
\frac{1}{2}\left[h(-4md) + h(-md)\right] & md \mbox{ odd and } md > 1\\
\frac{1}{2} h(-4md) &  8 | md \\
\frac{1}{2}\left[ h(-4md) - h_2(-4md)\right] & md \mbox{ even, not divisible by $8$}.
\end{cases},$$
where $h(D)$ is the size of the class group $\Pic(\O_D)$ of discriminant $D$ and $h_2(D) = \#\Pic(\O_D)[2]$.  
\end{corollary}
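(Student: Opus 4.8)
The plan is to deduce this from the preceding corollary, which already records $N_\sm(A_m,d)$ in terms of the unoriented class numbers $h^+=\#\Cl$ together with the non-smooth count $2^{\omega(m)}$ (resp.\ $2^{\omega(m)-1}$ for $d=1$) furnished by Corollary \ref{nonsm}. Only two ingredients remain: converting $\#\Cl$ to $\#\Pic$, and evaluating the resulting $2$-torsion by genus theory. The degenerate pair $m=d=1$ is handled by hand: $A_1=E\times E$ carries the single degree-$1$ polarization $h+v$, which is reducible, so $N_\sm(A_1,1)=0$.

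For the conversion I would use that $\Cl(D)$ is the orbit set of the inversion involution $[\a]\mapsto[\a]^{-1}$ on $\Pic(\O_D)$, whose fixed points are exactly the classes of order dividing $2$. Orbit counting then gives
\[ h^+(D)=\#\Cl(D)=\tfrac12\bigl(h(D)+h_2(D)\bigr). \]
Feeding this into the preceding corollary, each $h^+(-4md)$ and (for $md$ odd) each $h^+(-md)$ becomes $\tfrac12 h+\tfrac12 h_2$, and comparison with the four asserted formulas reduces the statement to three genus-theoretic identities, stated here for $d>1$ (for $d=1$ each exponent drops by one, matching the smaller non-smooth count): when $md$ is even and not divisible by $8$, $h_2(-4md)=2^{\omega(m)}$; when $8\mid md$, $h_2(-4md)=2^{\omega(m)+1}$; and when $md$ is odd, $h_2(-4md)+h_2(-md)=2^{\omega(m)+1}$. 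In each case the subtracted power of $2$ is exactly absorbed by the $\tfrac12 h_2$ terms.

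Both $h_2(-4md)$ and $h_2(-md)$ are computed by classical (Gauss) genus theory: $h_2(D)=2^{\mu(D)-1}$, where $\mu(D)$ is the number of generic characters, namely one for each distinct odd prime dividing $D$ plus a $2$-adic contribution determined by $D\bmod 32$. Since $d$ is prime to $m$, the number of distinct odd primes dividing $-4md$ is $\omega(m)$ when $md$ is even and $\omega(m)+1$ when $md$ is odd, so each identity amounts to pinning down the $2$-adic contribution: it is $1$ when $v_2(md)\in\{1,2\}$, it is $2$ when $8\mid md$, and it is $0$ (resp.\ $1$) for the suborder $-4md$ when $md$ is odd with $md\equiv 3$ (resp.\ $md\equiv 1$) modulo $4$. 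In the odd case I would split on $md\bmod 4$: when $md\equiv1\pmod4$ the value $-md$ is not a discriminant, so $h_2(-md)=0$ and $-4md$ supplies the entire $2^{\omega(m)+1}$; when $md\equiv3\pmod4$ the discriminant $-md$ and its conductor-$2$ suborder $-4md$ have equal $2$-rank $2^{\omega(m)}$, and the two contributions add.

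The main obstacle is this $2$-adic bookkeeping, which genuinely depends on $md\bmod 8$, and the fact that $m$ is not assumed squarefree. The latter is harmless, since genus characters see only the \emph{distinct} primes dividing the discriminant, not their multiplicities; the former I would tame by tabulating $\mu(-4md)$ and $\mu(-md)$ against $md\bmod 8$ and checking each row against the required power of $2$. One subtle point to flag is that for $md\equiv3\pmod4$ I am applying genus theory to the non-maximal order of conductor $2$, so I would phrase everything through the form-class-group version of genus theory, which holds verbatim for arbitrary (possibly non-fundamental) discriminants.
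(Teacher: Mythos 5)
Your proposal is correct and follows exactly the paper's route: the paper deduces the corollary from the preceding class-number formula together with the genus-theory count $\#\Cl(D)[2]=2^{\mu-1}$ from \cite[Prop.~3.11]{Cox} (stated for arbitrary, possibly non-fundamental, discriminants, which disposes of your conductor-$2$ worry), and your orbit-counting identity $h^+(D)=\tfrac12\left(h(D)+h_2(D)\right)$ plus the case-by-case $2$-adic bookkeeping is precisely the ``simple computation'' the paper leaves implicit.
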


\begin{proof}
This follows from a simple computation using the previous corollary and the following classical fact \cite[Prop. 3.11]{Cox}.

\begin{proposition}\label{2torsion}
Let $D\equiv 0,1$ $(\mod 4)$ be negative, and let $r$ be the number of odd primes dividing $D$.  Define $\mu = r$ if $D$ is odd and if $D = -4n$, with $n > 0$, set
$$\mu = \begin{cases} 
r &\mbox{if } n \equiv 3 \,(\mod 4)  \\ 
r+1 & \mbox{if } n \equiv 1,2 \,(\mod 4)\\
r+1 & \mbox{if } n \equiv 4 \,(\mod 8)\\
r+2 & \mbox{if } n \equiv 0 \,(\mod 8). 
\end{cases}$$
Then $\#\Cl(D)[2] =2^{\mu - 1}$. 
\end{proposition}   
\end{proof}

We can now prove Theorem \ref{smcurves}, which determine the set of integers $d$ such that $N_\sm(A_m,d) \neq 0$, i.e. such that $A_m$ contains a smooth curve of genus $d + 1$.     

%\begin{theorem}\label{smcurves}\normalfont
%If $A_m = E \times E'$ with minimal isogeny $E \to E'$ of degree $m$, then $N_\sm(A_m,d) = 0$ if and only if either $m = d = 1$ or all of the following four conditions are satisfied:
%\begin{itemize}
%\item $d$ is not composite.
%\item $(m,d) = 1$.
%\item $md$ is even but not divisible by 8.
%\item $\Cl(-4md) = \Cl(-4md)[2]$.
%\end{itemize}
%\end{theorem}

\begin{proof}[Proof of Theorem $\ref{smcurves}$]
If $d$ is composite then $N_\sm(A_m,d) > 0$.  Indeed, if $d = pq$ is a factorization, then $ph + qv$ is globally generated, hence smooth by Lemma \ref{globgen}, and of degree $d$.  So we may assume $d$ is prime or equal to 1.  If $(m,d) = 1$, then Corollary \ref{dprime} gives the result.  Now assume that $d = p$ is prime and divides $m$.  We show case by case that there is at least one smooth line bundle $L \in \NS(A_m)$ of degree $p$.  
Since $p$ divides $m$, the image of $\Psi_{m,d}$ is $V^\mprim_{-4md} \cup V^\mprim_{-4m/d}$.  The $2^{\omega(m)}$ non-smooth polarizations always land in either $V^\prim_{-4md}$ or $V^\prim_{-4m/d}$.  

\begin{enumerate}
\item Case: $d = 2$.  In this case $m$ is even and the target of $\Psi_{m,d}$ is $V^\mprim_{-8m} \cup V^\mprim_{-2m}$. 
\begin{enumerate}
\item Subcase: $m$ divisible by 4.  Then $\#\Cl(-8m)[2] = 2^{\omega(m)}$.  Since $V^\mprim_{-2m}$ is non-empty and $\Psi_{m,d}$ is surjective, we conclude $N_\sm(A,d) > 0$.   
\item Subcase:  $-m/2 \equiv 1$ (mod 4).  The image of $\Psi_{m,d}$ is then $V^\prim_{-8m} \cup V^\prim_{2m} \cup V^\prim_{-m/2}$.  Since $V^\prim_{-m/2}$ is not empty and contains only smooth polarizations, $N_\sm(A,d) > 0$.  
\item Subcase: $-m/2 \equiv 3$ (mod 4).  The image of $\Psi_{m,d}$ is $V^\prim_{-8m} \cup V^\prim_{-2m}$ in this case.  We have $\#\Cl(-8m)[2] = 2^{\omega(m) - 1}$ and $\#\Cl(-2m)[2] = 2^{\omega(m) -1}$.  So there are $2^{\omega(m)}$ images under $\Psi_{m,d}$, which is the same as the number of non-smooth polarizations.  But the fibers of $\Psi_{m,d}$ above elements of $q \in V^\prim_{-2m}$ have size $|\Gamma_0(m)\backslash \Gamma_0(m/2)/\Aut(q)|$.  When $m > 2$, $\Aut(q)$ acts on the three element set $\Gamma_0(m)\backslash \Gamma_0(m/2)$ by a quotient of size at most 2, so the fibers are not singletons and $N_\sm(A_m,d) > 0$.  

If $m = 2$, then consider the line bundle $L = 2h + X_\l + 2v$.  Neither $q_L$ nor $q_L^2$ are primitive, hence they cannot represent 1.  Since $E$ and $E_2 = E'$ are the only elliptic curves on $A_2$ up to abstract isomorphism, this shows that $L.F > 1$ for any elliptic curve $F$ on $A_2$.  So $L$ is smooth and $N_\sm(A_2,2) > 0$.
\end{enumerate}  
\item Case: $d > 2$.  
\begin{enumerate}
\item Subcase: $m$ odd.  Then $\Cl(-md)$ is non-empty if $m$ and $d$ are different (mod 4), in which case $N_\sm(A,d) > 0$.  If $m \equiv d$ (mod 4), then $\#\Cl(-4md) = 2^{\omega(m) + 1} > 2^{\omega(m)}$, so $N_\sm(A,d) > 0$.  
\item Subcase: $m$ even.  Then $\#\Cl(-4md)[2]$ is at least $2^{\omega(m)}$, which is the number of non-smooth polarizations.  Since $\Cl(-4m/d)$ is non-empty, $N_\sm(A,d) > 0$.  
\end{enumerate}
\end{enumerate}   
\end{proof}

%VERY AMPLE POLARIZATIONS
\section{Very ample polarizations on $A_m$}\label{vasect}
%Now that we have an understanding of the smooth polarizations on $A_m$, we wish to determine which of these are very ample.  In particular, we want to determine when $N_\va(A_m,d) = 0$.  Our main result is the following.
%
%\begin{theorem}\label{main}
%Let $A_m = E \times E'$ be a product of isogenous elliptic curves with $\Hom(E, E') \cong \Z\l$ and $\deg(\l) = m$, and suppose $d \geq 5$.  Then $N_\va(A_m,d) > 0$ if $d$ is not prime or twice a prime.  
%
%If $d = p$, then $N_\va(A_m,d) = 0$ if and only if  
%\begin{enumerate}
%\item $(m,p) = 1$.
%\item $\Cl(-4md)$ is $2$-torsion.
%\end{enumerate}
%
%If $d = 2p$, then $N_\va(A_m,d) = 0$ if and only if
%\begin{enumerate}
%\item $(m,p) = 1$.
%\item $\Cl(-4md)$ is $2$-torsion  
%\item If we factor $m = \prod_p p^{a_p}$, then $a_2$ is either $0$, $2$, or $3$.  
%\end{enumerate}
%\end{theorem}

Before proving Theorem \ref{main}, we record some basic facts about quadratic forms and ideal classes.

% SOME LEMMAS
\begin{lemma}\label{2torslemma}
Primitive quadratic forms $[a,ab,c]$ of discriminant $D$ correspond to $2$-torsion ideal classes in $\Pic(\O_D)$.
\end{lemma}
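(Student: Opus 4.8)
The plan is to invoke the classical dictionary between primitive binary quadratic forms of discriminant $D$ and the class group $\Pic(\O_D)$, under which proper ($\SL_2(\Z)$) equivalence classes of (positive definite) forms correspond to ideal classes via $[a,b,c]\mapsto a\Z+\frac{-b+\sqrt{D}}{2}\Z$ — the same correspondence already used in Lemma \ref{picardmap}. Under this dictionary the opposite form $[a,-b,c]$ corresponds to the conjugate ideal, whose class is the \emph{inverse} of the class of $[a,b,c]$ (see \cite{Cox}). Consequently a class is $2$-torsion precisely when it equals its own inverse, i.e.\ precisely when $[a,b,c]$ is properly equivalent to $[a,-b,c]$. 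So the whole lemma reduces to exhibiting a proper equivalence between the form $q=[a,ab,c]$ and its opposite $[a,-ab,c]$.

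To produce this equivalence I would apply the unimodular substitution $(x,y)\mapsto(x-by,\,y)$, i.e.\ the matrix $\gamma=\left(\begin{smallmatrix}1&-b\\0&1\end{smallmatrix}\right)\in\SL_2(\Z)$. A direct expansion gives
\[
q(x-by,\,y)=a(x-by)^2+ab(x-by)y+cy^2=ax^2-abxy+cy^2=[a,-ab,c],
\]
where the two terms $\pm ab^2y^2$ cancel. Thus $q$ and its opposite lie in the same $\SL_2(\Z)$-orbit, so $[q]=[q]^{-1}$ in $\Pic(\O_D)$ and $[q]$ is $2$-torsion. (As an alternative check one could instead compute $[a,ab,c]\cdot[a,ab,c]$ by Dirichlet's composition rule, as in the proof of Proposition \ref{equivariance}, and verify that the product is the principal class; but the equivalence-to-the-opposite argument is shorter.)

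The computation is entirely elementary, so there is no genuine obstacle here; the only point demanding care is orientation bookkeeping. The $2$-torsion structure lives in $\Pic(\O_D)$, which is governed by \emph{proper} ($\SL_2(\Z)$) equivalence and by the identification of the opposite form with the inverse class — not by the coarser unoriented $\GL_2(\Z)$-equivalence that appears elsewhere in the paper. Once these conventions are pinned down, the explicit matrix $\gamma$ finishes the proof. I note that this is exactly the shape of the forms we use in practice: $f_k=\frac{1}{g}[k,0,k'd]$ is the case $b=0$ and $g_k=\frac{1}{g}[k,k,(k+k'd)/4]$ is the case $b=1$, which is why Lemma \ref{2torslemma} suffices to show both $[f_k]$ and $[g_k]$ are $2$-torsion. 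The converse statement, that every $2$-torsion class contains a form of this shape, follows from the classical theory of ambiguous forms, but is not needed for our applications.
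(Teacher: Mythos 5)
Your proof is correct and takes essentially the same route as the paper: the paper observes that the ideal $a\Z + \frac{-ab+\sqrt{D}}{2}\Z$ attached to $[a,ab,c]$ is literally fixed by conjugation, which induces inversion on $\Pic(\O_D)$, while you verify the equivalent form-side statement that $[a,ab,c]$ is properly equivalent to its opposite $[a,-ab,c]$ via the substitution $(x,y)\mapsto(x-by,y)$. Both arguments rest on the same dictionary and the same key fact that conjugation is inversion, so the content is identical.
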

\begin{proof}
Recall that a primitive form $[a,b,c]$ of discriminant $D$ corresponds to the class of the ideal $\Z a +  \Z \frac{-b + \sqrt{D}}{2}$ in the quadratic ring of discriminant $D$.  Moreover, conjugation of ideals induces inversion on the class group.  As the ideal $(a, \frac{-ab + \sqrt D}{2})$ is fixed by conjugation, it follows that the corresponding ideal classes are 2-torsion.   
\end{proof}

\begin{lemma}\label{reps}
Let $f = [a,b,c]$ be a primitive positive definite quadratic form of discriminant $D$ and let $[\a]$ be the corresponding class of proper $\O_{D}$-ideals.  Then $[\a]$ contains an ideal of norm $a$.  
\end{lemma}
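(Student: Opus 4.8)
The plan is to avoid the abstract form-ideal dictionary entirely and instead exhibit one explicit integral ideal in the class $[\a]$ whose norm can be read off directly. Recall from the proof of Lemma \ref{2torslemma} that the class corresponding to $f = [a,b,c]$ is represented by the proper $\O_D$-ideal
\[ \a = \Z a + \Z\omega, \qquad \omega = \frac{-b + \sqrt{D}}{2}. \]
I claim this very ideal has norm $a$, which settles the lemma. First I would check integrality, i.e.\ $\a \subseteq \O_D$. Since $a \in \Z \subseteq \O_D$, it is enough to see $\omega \in \O_D$. This is a parity check: from $D = b^2 - 4ac$ we get $D \equiv b^2 \equiv b \pmod 2$, hence $-b \equiv D \pmod 2$, which is exactly the congruence guaranteeing that $\frac{-b + \sqrt D}{2}$ lies in $\O_D$.

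The key step is the norm computation $N(\a) = [\O_D : \a]$. I would first note that $\{1, \omega\}$ is a $\Z$-basis of $\O_D$: indeed $\O_D = \Z + \Z\frac{D + \sqrt D}{2}$, and $\frac{D+\sqrt D}{2} - \omega = \frac{D+b}{2} \in \Z$ (using $D \equiv b \pmod 2$ again), so $\omega$ generates $\O_D$ over $\Z$ as well. Now both lattices are written in the single basis $\{1,\omega\}$: we have $\O_D = \Z\cdot 1 + \Z\cdot\omega$ while $\a = \Z\cdot a + \Z\cdot\omega$, so the inclusion $\a \hookrightarrow \O_D$ has matrix $\left(\begin{smallmatrix} a & 0 \\ 0 & 1 \end{smallmatrix}\right)$ of determinant $a$. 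Therefore $[\O_D : \a] = a$, i.e.\ $N(\a) = a$, and $\a$ is the desired ideal.

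As a cross-check (and an alternative route) one can instead compute the norm form on the basis $\{a, \omega\}$: using $\omega + \bar\omega = -b$ and $\omega\bar\omega = \frac{b^2 - D}{4} = ac$ one finds $N(ax + \omega y) = a(ax^2 - bxy + cy^2)$, i.e.\ $a$ times a primitive form; since the dictionary recovers the primitive form $f$ by dividing the norm form by $N(\a)$, this again forces $N(\a) = a$. I do not anticipate a real obstacle here — the statement is essentially an index bookkeeping fact. The only points demanding care are the integrality parity check above and the sign convention in the middle coefficient (the chosen basis produces $[a,-b,c]$ rather than $[a,b,c]$, but these have the same discriminant and leading coefficient and are $\GL_2(\Z)$-equivalent, so the norm conclusion is unaffected).
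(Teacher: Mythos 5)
Your proof is correct, but note that the paper offers no proof of Lemma \ref{reps} at all: it is recorded as a classical fact (it is essentially \cite[Thm.\ 7.7]{Cox}, the standard form--ideal dictionary), so there is nothing internal to compare against beyond the correspondence $[a,b,c] \leftrightarrow \Z a + \Z\frac{-b+\sqrt D}{2}$ that the paper itself invokes in the proof of Lemma \ref{2torslemma}. Your self-contained verification is sound: the parity check $D \equiv b \pmod 2$ gives $\omega \in \O_D$, the observation that $\{1,\omega\}$ is a $\Z$-basis of $\O_D$ is right, and the index computation $[\O_D : \a] = a$ (using $a > 0$ from positive definiteness) is exactly the content of the lemma. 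Two small points deserve a remark. First, you verify only that $\a$ is a sublattice of $\O_D$, not that it is an $\O_D$-ideal; that is legitimate here since you import the ideal structure from the dictionary, but for a fully self-contained argument one would add that $\omega^2 = -b\omega - ac \in \Z a + \Z\omega$, so the lattice is indeed closed under multiplication by $\O_D$. Second, your handling of the sign convention is adequate but the cleanest justification is slightly different from the one you give: depending on conventions, $\Z a + \Z\omega$ may represent the class of $[a,-b,c]$, i.e.\ the \emph{inverse} class in $\Pic(\O_D)$; this is harmless both because the paper works in $\Cl(D)$, where a class and its inverse are identified, and because even in $\Pic(\O_D)$ the conjugate ideal $\bar\a = \Z a + \Z\frac{b+\sqrt D}{2}$ lies in the inverse class and has the same norm $a$, so either class contains an ideal of norm $a$. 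Your cross-check via $N(ax+\omega y) = a(ax^2 - bxy + cy^2)$ is a nice redundancy and matches the index computation.
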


\begin{lemma}\label{kernel}
Let $D$ be a quadratic discriminant and $g \geq 1$ an integer.  Then the kernel of the natural map
$\Pic(\O_{g^2D}) \to \Pic(\O_D)$ has size 
$$\frac{g}{\left[\O^\times_D : \O_{g^2D}^\times\right]} \prod_{p | g} \left( 1 - \left(\frac{D}{p}\right) \frac{1}{p}\right).$$ 
\end{lemma}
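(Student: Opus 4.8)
The plan is to reduce the statement to two standard ingredients: surjectivity of the extension-of-ideals map together with a local count of units in $\O_D/g\O_D$. First I would recall that $\O_{g^2D}$ is the suborder $\Z + g\O_D$ of $\O_D$, that its conductor ideal inside $\O_D$ is exactly $g\O_D$, and that the natural map $e_g : \Pic(\O_{g^2D}) \to \Pic(\O_D)$ is surjective. The key input is the exact sequence
\[
1 \to \O_D^\times/\O_{g^2D}^\times \to (\O_D/g\O_D)^\times / (\Z/g\Z)^\times \to \Pic(\O_{g^2D}) \to \Pic(\O_D) \to 1,
\]
which holds for an arbitrary quadratic order $\O_D$; this is the relative version of the sequence underlying \cite[Thm.~7.24]{Cox}, and uses that $\O_{g^2D}/g\O_D \cong \Z/g\Z$ embeds as $(\Z/g\Z)^\times$ in the middle term. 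Reading off the kernel of $e_g$ as the image of the middle group, I obtain that it has order
\[
\frac{|(\O_D/g\O_D)^\times|}{\phi(g)\,[\O_D^\times : \O_{g^2D}^\times]}.
\]

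Second, I would compute $|(\O_D/g\O_D)^\times|$ by the Chinese Remainder Theorem, reducing to prime powers $p^k \,\|\, g$ and analyzing the $\Z_p$-order $\O_D\otimes\Z_p$. For each such prime the local count is $p^{2k}\bigl(1-\tfrac1p\bigr)\bigl(1-\left(\frac{D}{p}\right)\tfrac1p\bigr)$, according to whether $p$ is split, inert, or ramified in $\O_D$, with $\left(\frac{D}{p}\right)$ the Kronecker symbol. The point to be careful about is that when $p$ divides the conductor of $\O_D$ we have $p^2\mid D$, so $\left(\frac{D}{p}\right)=0$, and $\O_D\otimes\Z_p$ is genuinely non-maximal; in that case $\O_D/p\O_D \cong \F_p[\epsilon]/(\epsilon^2)$, giving the same unit count as the ramified case, so the formula remains uniform. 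Multiplying over $p\mid g$ yields
\[
|(\O_D/g\O_D)^\times| = g^2 \prod_{p\mid g}\left(1-\frac1p\right)\left(1-\left(\frac{D}{p}\right)\frac1p\right).
\]

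Finally I would combine the two displays: since $\phi(g) = g\prod_{p\mid g}\bigl(1-\tfrac1p\bigr)$, the factors $\bigl(1-\tfrac1p\bigr)$ cancel, and the kernel size becomes
\[
\frac{g\,\prod_{p\mid g}\left(1-\left(\frac{D}{p}\right)\frac1p\right)}{[\O_D^\times : \O_{g^2D}^\times]},
\]
which is the asserted formula. The main obstacle is the local unit computation in the second step, namely giving a clean uniform treatment of the primes dividing the conductor of $\O_D$ and verifying that the count $p^{2k}\bigl(1-\tfrac1p\bigr)$ persists for all $k\ge 1$ there. As a cross-check and alternative, I could bypass the local analysis by reducing everything to the maximal order: writing $D = f_0^2 d_K$ and applying the class number formula \cite[Thm.~7.24]{Cox} to $\O_{g^2D}$ (conductor $gf_0$) and to $\O_D$ (conductor $f_0$), the ratio $h(\O_{g^2D})/h(\O_D)$ equals the kernel size by surjectivity; the Euler factors at $p\mid f_0$ contribute trivially since $\left(\frac{D}{p}\right)=0$, while for $p\nmid f_0$ one has $\left(\frac{d_K}{p}\right)=\left(\frac{D}{p}\right)$, recovering the same product.
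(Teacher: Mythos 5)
Your proof is correct, but it is worth noting that the paper does not actually prove this lemma at all: its ``proof'' is the single line ``See \cite{Cox}.'' So you have supplied the details the authors delegate to the literature, and in fact you give two routes. Your primary argument -- the relative exact sequence
\[
1 \to \O_D^\times/\O_{g^2D}^\times \to (\O_D/g\O_D)^\times/(\Z/g\Z)^\times \to \Pic(\O_{g^2D}) \to \Pic(\O_D) \to 1
\]
together with the local unit count -- is a self-contained generalization of Cox's Theorem 7.24 from the pair (order, maximal order) to an arbitrary pair $\O_{g^2D}\subset\O_D$; its one delicate point is exactly the one you flag, namely primes $p$ dividing the conductor of $\O_D$, where $\O_D\otimes\Z_p$ is non-maximal, and your computation there ($\O_D/p\O_D\cong\F_p[\epsilon]/(\epsilon^2)$, count $p^{2k}(1-\tfrac1p)$, consistent with $\left(\tfrac{D}{p}\right)=0$ since $p^2\mid D$) is right. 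Your ``cross-check'' is actually the cleanest complete proof and is almost certainly what the citation intends: apply Cox's class number formula to the orders of conductors $gf_0$ and $f_0$ inside $\O_K$, divide, use surjectivity of $e_g$ (every ideal class contains an ideal prime to $g$) to identify the ratio $h(\O_{g^2D})/h(\O_D)$ with the kernel size, and observe that the Euler factors at $p\mid f_0$ drop out because $\left(\tfrac{D}{p}\right)=0$ there while $\left(\tfrac{D}{p}\right)=\left(\tfrac{d_K}{p}\right)$ for $p\nmid f_0$. That second route buys you a proof using only the cited theorem, bypassing both the relative exact sequence and the local analysis at conductor primes, so no genuine gap remains even where your first argument leans on standard-but-unproved inputs.
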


\begin{proof}
See \cite{Cox}.  
\end{proof}

%COUNTING SMOOTH MERELY AMPLE CLASSES
The next few propositions count the number of (equivalence classes of) smooth but merely ample line bundles on $A_m$
\begin{proposition}\label{oddm}
Let $d \geq 5$ be odd and squarefree and let $m \geq 1$ be odd.  Then there are $2^{\omega(m)}$ $\Aut(A_m)$-equivalence classes of smooth and merely ample line bundles of degree $d$ on $A_m$.  
\end{proposition}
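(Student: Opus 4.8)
Here is how I would prove Proposition \ref{oddm}. The plan is to mimic the treatment of non-smooth classes in Proposition \ref{2t}: produce one smooth, merely ample class explicitly and then move it around by the Atkin--Lehner group $W(m)$. First I record the numerical criterion. By Proposition \ref{crit}, $L$ is smooth exactly when no elliptic curve $E'\subset A_m$ satisfies $E'.L=1$; by Theorem \ref{reider} (valid since $d\ge 5$), $L$ is merely ample exactly when some $E'$ satisfies $E'.L\le 2$. Combining these with Lemma \ref{unique}, a degree-$d$ ample $L$ is smooth and merely ample if and only if there is exactly one elliptic curve $E'\subset A_m$ with $E'.L=2$ and none with $E'.L=1$. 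A direct computation with the formula for $q_L$ and Proposition \ref{ECs} gives $q_L(s,t)=\gcd(s,m)\,(E'.L)$ for the curve $E'=E_{s,t}$, and since $m$ is odd the factor $\gcd(s,m)$ is odd; this lets me read these intersection numbers off the quadratic form.

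Now I exhibit one such class and spread it. Take $L_0\equiv \tfrac{m+d}{2}h+X_\lambda+2v$, which makes sense because $m\equiv d\pmod 2$; it is ample of degree $d$ with $q_{L_0}=[\,m(m+d)/2,\,-2m,\,2\,]$. A short reduction carries $q_{L_0}$ to the reduced form $[2,2,(1+md)/2]$, which represents $2$ but no odd integer $\le m$ (here $(1+md)/2>m$ because $d\ge 5$). Hence $E_0.L_0=2$ while no elliptic curve meets $L_0$ in $1$, so $L_0$ is smooth and merely ample, with distinguished curve the horizontal $E_0$. For each unitary divisor $k\mid m$ (meaning $(k,m/k)=1$) the operator $w_k$ is an isometry of $\NS(A_m)$, so $w_k(L_0)$ is again smooth and merely ample of degree $d$, now with distinguished curve $w_k(E_0)\cong E_k$. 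By Corollary \ref{equiv} (exactly as in Proposition \ref{2t}) one has $\Psi_{m,d}(w_k(L_0))=e_g([q_{L_0}])\cdot[f_k]$, so the $2^{\omega(m)}$ classes $w_k(L_0)$ have pairwise distinct images, forming a full coset of the $2$-torsion subgroup $\langle[f_k]\rangle$, whose order is $2^{\omega(m)}$ by Proposition \ref{2t} and Corollary \ref{nonsm}; in particular they are pairwise $\Aut(A_m)$-inequivalent.

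It remains to prove exhaustiveness: every smooth, merely ample $L$ of degree $d$ equals some $w_k(L_0)$ up to $\Aut(A_m)$. Let $E'$ be its unique curve with $E'.L=2$. The sharp form of Reider's theorem, Theorem \ref{precisereider}, produces a complementary curve $F$ with $E'\cap F=E'[2]=F[2]$ and $\mu^*L=2dh+2v$ for the subtraction $4$-isogeny $\mu\colon E'\times F\to A_m$; comparing with the type of $A_m$ via Proposition \ref{redpp} forces $E'\cong E_k$ for a \emph{unitary} divisor $k$. By Proposition \ref{ECs} there is a single $\Aut(A_m)$-class of such curves (here $\gcd(k,m/k)=1$), so after applying $w_k$ I may assume $E'=E_0$; the same rigidity then pins $F$, and hence $L$, down up to $\Aut(A_m)$, giving $w_k(L)\sim L_0$. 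Thus the invariant $L\mapsto k$ gives a bijection between the smooth, merely ample classes of degree $d$ and the $2^{\omega(m)}$ unitary divisors of $m$.

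The crux is the rigidity used for exhaustiveness: that a smooth, merely ample $L$ with prescribed distinguished curve is unique up to $\Aut(A_m)$, and that only unitary divisors $k$ occur. This is precisely where the strengthened Theorem \ref{precisereider}, rather than Reider's inequality alone, is indispensable, since it identifies $A_m$ with the $2$-torsion quotient $E'\times F/\Gamma_\phi$ and thereby rigidifies the configuration. A secondary, purely bookkeeping complication is the parity of $md$: when $md\equiv 3\pmod 4$ the form $q_{L_0}$ is only matrix-primitive, $\tfrac12 q_{L_0}$ has discriminant $-md$, and the equivariance must be run inside $\Cl(-md)$ with the classes $[g_k]$ of Proposition \ref{equivariance} replacing the $[f_k]$; this relocates the relevant coset to a different class group but leaves the count $2^{\omega(m)}$ intact. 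Since the orbit-plus-exhaustiveness argument counts $\Aut(A_m)$-classes directly, this same remark dissolves any worry about non-trivial fibers of $\Psi_{m,d}$ when $\gcd(m,d)>1$.
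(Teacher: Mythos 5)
Your construction half is correct, and it is actually a pleasant variant of the paper's: instead of invoking the converse direction of Theorem \ref{precisereider} to build $L_k$ as a quotient polarization, you write down $L_0 \equiv \frac{m+d}{2}h + X_\l + 2v$ explicitly, check $q_{L_0}$ reduces to $[2,2,(1+md)/2]$, and read off smoothness from the fact that the odd values of this form exceed $m$ (the identity $q_L(f_{y,x}) = \gcd(y,m)\,(L.E_{y,x})$ you use is the same one the paper uses implicitly via $q_L|_{\Hom(E_k,A_m)} = k\,q_L^k$). Transporting by $w_k$ and separating the classes is also fine, though your phrase ``forming a full coset of the $2$-torsion subgroup'' is loose when $\gcd(m,d)>1$, since the classes $[f_k]$ then lie in different class groups $\Cl(-4md/g^2)$; pairwise inequivalence is anyway immediate from Lemma \ref{unique}, because the distinguished curves $E_k$ are pairwise non-isomorphic for unitary $k \mid m$.

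The genuine gap is in exhaustiveness, in two places. First, your claim that ``comparing with the type of $A_m$ via Proposition \ref{redpp} forces $E' \cong E_k$ for a unitary divisor $k$'' is asserted, not proved; the paper gets $(k,k')=1$ by a one-line argument you do not have: $q_M^k = [ak',-2bm,ck]$ represents $2$, so $(k,k')$ divides $2$, and $m$ is odd. Second, and more seriously, the sentence ``the same rigidity then pins $F$, and hence $L$, down up to $\Aut(A_m)$'' is precisely the statement to be proved, and Theorem \ref{precisereider} does not supply it: it produces the configuration $(E', F, \mu^*L = 2dh+2v)$ for a given $L$, but says nothing about uniqueness of the complementary curve $F$ (there are a priori many elliptic curves $F \subset A_m$ with $F[2]=E'[2]$, each yielding a merely ample candidate — recall the merely ample locus is a whole surface in moduli, per the remark following Theorem \ref{precisereider}), nor about two such $L$'s being conjugate under the stabilizer of $E'$ in $\Aut(A_m)$. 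The paper closes exactly this hole with quadratic forms: after relabeling so $k=1$, the condition $M.E = 2$ gives $c = q_M(1,0) = 2$, hence $q_M$ is matrix-primitive since $m$ is odd; any matrix-primitive form of discriminant $-4md$ representing $2$ is $\GL_2(\Z)$-equivalent to $[2,2,(1+md)/2]$ (the representation of $2$ is necessarily primitive, and the reduction is forced since $md$ is odd); and $\Psi_{m,d}$ is injective on classes with matrix-primitive $q_L$, because by Proposition \ref{formsmap} the fiber over such a class has size $|\Gamma_0(m)\backslash\Gamma_0(m/g)/\Aut(q)| = 1$ when $g=1$. So $\Psi_{m,d}(M) = \Psi_{m,d}(L_k)$ forces $M \sim L_k$. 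You should either reproduce this argument or give an actual proof of your rigidity claim; as written it is circular.
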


\begin{proof}
First we construct $2^{\omega(m)}$ such line bundles.  For each $k | m$ such that $(k,k') = 1$, we choose an isomorphism $E_k \times E_{k'} \cong A_m$.  We let $\l_k: E_k \to E_{k'}$ be the usual minimal isogeny of degree $m$.  Since $m$ is odd, $\l_k$ induces an isomorphism $E_k[2] \cong E_{k'}[2]$.  We let $G_k \subset E_k \times E_{k'}$ be the subgroup 
$$\{ (P,\l_k(Q)): P \in E_k[2]\},$$
which is abstractly isomorphic to $(\Z/2\Z)^2$.  In fact, $G_k = \Gamma_{\l_k}[2]$, where $\Gamma_{\l_k}$ is the graph of $\l_k$. Since $$E_k \times E_{k'} \cong \Gamma_{\l_k} \times E_{k'},$$
the quotient $E_k \times E_{k'}/G_k$ is isomorphic to $A_m$.  Write $\pi: E_k \times E_{k'} \to A_m$ for the induced 4-isogeny.  Then by Theorem \ref{precisereider}, there exists a merely ample $L_k \in \Pic(A_m)$ such that $\pi^*L_k =  2dh_k + 2v_k$, where $h_k$ and $v_k$ are the axes with respect to the decomposition $E_k \times E_{k'}$.  

To prove that $L_k$ is smooth, we may assume $k = 1$ (we could always relabel the elliptic curves on $A_m$).  If $L_k$ is not smooth, then there exists a decomposition $A_m \cong F \times F'$ such that $L \equiv dh + v$, where $h$ and $v$ are axes with respect to this new decomposition.  But then $(dh + v. E) = 2$, which forces $E.h = 0$ and hence $F = E$.  Then $q_L = [dm, 0, 1]$ which does not represent 2, contradicting $L.E = 2$.  This shows the existence of smooth but merely ample $L_k$ for each of the $2^{\omega(m)}$ divisors $k |m$ satisfying $(k,m/k) =1$.  The $L_k$ have the property that $L_k.F =2$, for some elliptic curve $F$ isomorphic to $E_k$.  By Lemma \ref{unique}, the $L_k$ are in different $\Aut(A_m)$-equivalence classes.       

Finally, we need to show that any smooth and merely ample line bundle $M$ of degree $d$ is $\Aut(A_m)$-equivalent to one of the $L_k$.  By Reider's theorem, $M.F = 2$ for some elliptic curve which is isomorphic to $E_k$ for some $k | m$.  As $q_M^k = [ak', -2bm, ck]$ represents 2, we must have $(k,k') \leq 2$.  Since $m$ is odd, this means $(k,k') = 1$ and again we may assume $k = 1$.  Writing $L$ for $L_1$, we need to show that $L$ and $M$ are $\Aut(A_m)$-equivalent.  But both $q_L$ and $q_M$ are matrix-primitive and represent 2 and so are $\GL_2(\Z)$-equivalent to the form $[2,2,(1+md)/2]$.   In other words, $\Psi_{m,d}(L) =\Psi_{m,d}(M)$.  Since $\Psi_{m,d}$ is injective on the set of equivalence classes of $L$ for which $q_L$ is matrix-primitive, we see that $L$ and $M$ are $\Aut(A_m)$-equivalent.                 
\end{proof}

% MERELY AMPLE COUNT EVEN m
\begin{proposition}\label{mevennot8}
Let $d \geq 5$ be odd and squarefree and suppose $m$ is even but not divisible by $8$.  If $L \in \NS(A_m)$ has degree $d$ and is merely ample, then $L$ is not smooth.   
\end{proposition}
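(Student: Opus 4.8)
The plan is to argue by contradiction. Suppose $L\in\NS(A_m)$ has degree $d$, is smooth, and is merely ample; I will derive a contradiction from $m$ being even but not divisible by $8$. Since $d\geq 5$, Reider's theorem (Theorem \ref{reider}) applies: as $L$ is not very ample there is an elliptic curve $P\subset A_m$ with $L.P\leq 2$. Smoothness rules out $L.P=1$ by Proposition \ref{crit}, so $L.P=2$. By Proposition \ref{redpp} we have $P\cong E_k$ for some $k\mid m$, and then, writing $L\equiv ah+bX_\l+cv$ and $k'=m/k$, the form $q_L^k=[ak',-2bm,ck]$ represents $2$. On the other hand, smoothness means (again by Proposition \ref{crit}) that no elliptic curve $F$ has $L.F=1$; since any representation of $1$ by a positive definite integral form is automatically primitive and hence comes from an honest elliptic curve on $A_m$, this says that no form $q_L^j$ with $j\mid m$ represents $1$. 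So it suffices to manufacture, from the representation $q_L^k(x_0,y_0)=2$, a divisor $j\mid m$ for which $q_L^j$ represents $1$.

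The engine is a $2$-adic analysis, and this is where the hypothesis $8\nmid m$ enters. Write $m=2^sm'$ with $m'$ odd, so $s\in\{1,2\}$, and note that $a,c$ are odd because $d=ac-b^2m$ is odd. As $m$ is even, the middle coefficient $-2bm$ is divisible by $4$, so $q_L^k(x_0,y_0)=2$ reduces mod $4$ to $ak'\,x_0^2+ck\,y_0^2\equiv 2\pmod 4$. I would run the argument according to $v_2(k)$, using $v_2(k)+v_2(k')=s$. When $s=1$ exactly one of the two outer coefficients $ak',ck$ is odd; the congruence forces the coordinate attached to that odd coefficient to be even, and dividing that coordinate by $2$ turns $q_L^k(x_0,y_0)=2$ into $q_L^{j}(x_0',y_0')=1$, where $j$ is $k$ with its $2$-part flipped (so $j=k/2$ when $k$ is even and $j=2k$ when $k$ is odd). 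This $j$ still divides $m$, giving an elliptic curve $F\cong E_j$ with $L.F=1$ and contradicting smoothness.

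The case $s=2$ (that is, $v_2(m)=2$) is the main obstacle. Here the congruence mod $4$ immediately excludes $v_2(k)\in\{0,2\}$ and forces $v_2(k)=v_2(k')=1$. Writing $k=2k_o$, $k'=2k_o'$ with $k_o,k_o'$ odd and $k_ok_o'=m'$, the form $q_L^k$ has content $2$: one has $q_L^k=2q'$ with $q'=[ak_o',-bm,ck_o]$, and $q_L^k(x_0,y_0)=2$ gives $q'(x_0,y_0)=1$. The danger is that this representation of $1$ might have $x_0,y_0$ both odd, in which case neither of the identities $q_L^{k_o}(X,Y)=q'(2X,Y)$ nor $q_L^{4k_o}(X,Y)=q'(X,2Y)$ yields an integral representation of $1$. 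This danger does not occur, and ruling it out is the crux of the proof: since $a,c,k_o,k_o'$ are all odd while the middle coefficient of $q'$ is even, we have $q'(x,y)\equiv x^2+y^2\equiv x+y\pmod 2$, so $q'(x_0,y_0)=1$ forces $x_0+y_0$ odd, i.e. exactly one coordinate is even. If $x_0$ is even then $q_L^{k_o}(x_0/2,y_0)=q'(x_0,y_0)=1$, and if $y_0$ is even then $q_L^{4k_o}(x_0,y_0/2)=1$; in either case some $q_L^j$ with $j\mid m$ represents $1$, contradicting smoothness. Since both $s=1$ and $s=2$ end in a contradiction, no smooth merely ample $L$ of degree $d$ exists. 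The single delicate point, and precisely the place where $v_2(m)\leq 2$ is essential, is the parity computation in the $s=2$ case that excludes the both-odd representation; for $8\mid m$ the one-step $2$-adic descent no longer lands on a divisor of $m$, which is exactly why smooth merely ample bundles can reappear in that regime.
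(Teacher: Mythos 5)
Your proof is correct, and it takes a genuinely different route from the paper's. The paper works through the class-group formalism of Section \ref{param}: for $m \equiv 2 \pmod 4$ it shows $q_L$ is primitive, identifies $[q_L] = [f_2]$ as the unique class of discriminant $-4md$ representing $2$, and uses the Atkin--Lehner equivariance (Corollary \ref{equiv}) to conclude that $[q_{w_2(L)}]$ is trivial and hence represents $1$; for $m \equiv 4 \pmod 8$ it shows $[q_L]$ and $[q_L^4]$ are distinct elements of the order-two kernel of $e_2 \colon \Pic(\O_{-4md}) \to \Pic(\O_{-md})$ (Lemmas \ref{picardmap} and \ref{kernel}, Proposition \ref{equivariance}), so one of them is trivial and represents $1$. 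You replace all of this with an elementary $2$-adic descent on the forms $q_L^j$ themselves: the mod-$4$ congruence pins down $v_2(k)$, and the parity of the representing vector tells you exactly which coordinate to halve. Your identities $q_L^k(x,2y) = 2q_L^{2k}(x,y)$ and $q_L^k(2x,y) = 2q_L^{k/2}(x,y)$, and their analogues in the $v_2(m)=2$ case, all check out, and your parity computation $q' \equiv x + y \pmod 2$ does the work for which the paper instead invokes $[q_L] \neq [q_L^4]$ --- indeed your version is more constructive, since it decides \emph{which} of the two forms represents $1$, where the paper only knows that one of them does. Your approach buys self-containedness (no composition of forms, no Picard-group kernels) and makes transparent exactly where $8 \nmid m$ enters, correctly anticipating why smooth merely ample bundles reappear in Proposition \ref{divby8}; the paper's approach buys the structural placement of merely ample classes among $2$-torsion classes, which is reused in Proposition \ref{merely} and in the proofs of Theorems \ref{mainp} and \ref{main2p}. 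One small repair: your parenthetical justification that a representation of $1$ is ``automatically primitive and hence comes from an honest elliptic curve'' is not quite the right reason, since a primitive vector in $\Hom(E_j, A_m)$ can correspond to a non-injective homomorphism (e.g.\ $P \mapsto (2P, \l(P))$ has kernel $H_2$ when $m$ is even); the correct argument is degree multiplicativity: if $f \in \Hom(E_j, A_m)$ has image the elliptic curve $F$ and factors through an isogeny of degree $n$, then $\deg(f^*L) = n\,(L.F)$, so $q_L^j$ representing $1$ forces $n = 1$ and $L.F = 1$, contradicting smoothness via Proposition \ref{crit}. This same remark certifies your opening step, where the representation of $2$ by $q_L^k$ comes from the embedding $E_k \cong P \subset A_m$ supplied by Proposition \ref{redpp}.
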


\begin{proof}
First assume that $m \equiv 2$ (mod 4).  As $L$ is merely ample, we have $L.F = 1$ or $L.F = 2$ for some elliptic curve $F \subset A_m$.  We may assume that $L.F = 2$ and we need to show that $L.F' = 1$ for some elliptic curve $F'$.  $F$ is isomorphic to $E_k$ for some $k | m$ and $q_L^k = [ak', -2bm,ck]$ represents 2.  This implies $(k,k') = 1$.  Thus after reindexing the elliptic curves on $A_m$, we may assume $k = 1$ and $q_L$ represents 2.  The content of $q_L$ is then at most 2, and since $\Disc(q_L) = -4md$ is not divisible by 16, $q_L$ is in fact primitive.  We must then have $[q_L] = [f_2],$ as there is only one class of quadratic form of discriminant $-4md$ which represents 2.  By Corollary \ref{equiv}, we have 
$$1 = [f_2]^2 = [q_L] \cdot [f_2] = [q_{w_2(L)}].$$
So there exists an elliptic curve $F_0 \subset A_m$ such that $w_2(L).F_0 = 1$.  Hence $L.w_2(F_0) = 1$, showing that $L$ is not smooth.  

Now assume that $m \equiv 4$ (mod 8) and again suppose $L.F = 2$ for an elliptic curve $F$ isomorphic to $E_k$.  As $q_L^k = [ak', -2bm, ck]$ represents 2 and $a$ and $c$ are odd (since $ac = mb^2 + d$), we must have $(k,k') = 2$ and $q_L^k$ has content 2.  After relabeling the elliptic curves, we may assume $k = 2$ and that $\frac{1}{2}q_L^2$ represents 1.  Write $q_L^2 = h(x,y)$.  Then
$$h(2x,y) = [am, -2bm, c] = q_L$$
$$h(x,2y) = [am/4, -2bm, 4c] = q^4_L$$
By Lemma \ref{picardmap}, $[q_L]$ and $[q_L^4]$ are in the kernel of 
$$e_2: \Pic(\O_{-4md}) \to \Pic(\O_{-md}).$$
This kernel has size 2 by Lemma \ref{kernel}.  But $[q_L] \neq [q_L^4]$ by Proposition \ref{equivariance}.  So one of $[q_L]$ or $[q_L^4]$ is the trivial class; i.e.\ one of $q_L$ or $q_L^4$ represents 1.  In particular, there is an elliptic curve $F' \subset A_m$ such that $L.F' = 1$, as desired.      
\end{proof}

% MERELY AMPLE COUNT m DIVISIBLE by 8
\begin{proposition}\label{divby8}
Let $d \geq 5$ be odd and squarefree and suppose $m$ is divisible by $8$.  Then there are $2^{\omega(m)}$ $\Aut(A_m)$-equivalence classes of smooth and merely ample line bundles of degree $d$ on $A_m$.
\end{proposition}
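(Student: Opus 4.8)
The plan is to follow the template of Proposition~\ref{oddm}: exhibit $2^{\omega(m)}$ smooth merely ample classes of degree $d$, show they are pairwise $\Aut(A_m)$-inequivalent, and show they exhaust all such classes. The new feature, forced by $8\mid m$, concerns the complementary curve. If $L$ is smooth and merely ample of degree $d$, then by Theorem~\ref{reider} there is an elliptic curve $F\cong E_k$ with $L.F=2$, so $q_L^k=[ak',-2bm,ck]$ represents $2$ at a primitive vector. Since $8\mid m$ and $d=ac-b^2m$ is odd, both $a$ and $c$ are odd; a short $2$-adic inspection of the coefficients then shows that $q_L^k$ cannot be matrix-primitive (a primitive form of discriminant $-4md$ does not represent $2$ once $8\mid md$, by the same content computation behind Proposition~\ref{mevennot8}), and in fact that its content is $2$ with exactly one of $k,k'$ congruent to $2\pmod 4$. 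Hence $(k,k')=2$, and $\tfrac12 q_L^k$ is a primitive form of discriminant $-md$ representing $1$, i.e.\ the principal class of $\Cl(-md)$. This is the structural fact that organizes the whole argument.

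This fact does most of the bookkeeping. Applying the Atkin--Lehner operators $w_k$ for the $2^{\omega(m)}$ divisors $k\mid m$ with $(k,k')=1$, and invoking Corollary~\ref{equiv} together with Proposition~\ref{equivariance}, the smooth merely ample classes of degree $d$ form a single $W(m)$-orbit: they all carry the same underlying principal-class datum, and their images under $\Psi_{m,d}$ differ by the $2$-torsion elements $[f_k]$. These translates are distinct because the $[f_k]$ generate a free $(\Z/2\Z)^{\omega(m)}$ inside the relevant class group (genus theory, Proposition~\ref{2torsion}). Thus there are at most $2^{\omega(m)}$ such classes and they are pairwise inequivalent; moreover the exhaustion step reduces to the injectivity of $\Psi_{m,d}$ on matrix-primitive classes (Proposition~\ref{formsmap}), once the starting class is known to exist. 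It therefore suffices to construct a single smooth merely ample class of degree $d$ and to confirm that its entire $W(m)$-orbit is smooth.

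The construction is the main obstacle. I would use the converse half of Theorem~\ref{precisereider}: for elliptic curves $E,F$ and an isomorphism $\phi:E[2]\to F[2]$, the quotient $E\times F/\Gamma_\phi$ carries a merely ample degree-$d$ bundle $L$ with $\pi^*L=2dh+2v$ and $L.\pi(E\times 0)=2$. The difficulty is to choose $E$, $F$, and $\phi$ so that $E\times F/\Gamma_\phi\cong A_m$; this is exactly where $8\mid m$ is indispensable, since the naive choice used in the odd case ($\phi$ induced by the minimal isogeny) is unavailable when the isogeny has even degree. I would carry this out in a one-parameter family, letting the underlying elliptic curve vary over $X_0(m)$, and verify via the invariant $m=\tfrac12\det\NS(A_m)$ (the minimal isogeny degree) that every member of the family really has type $m$. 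This simultaneously produces the required bundle on our given $A_m$ and yields the promised explicit embedding of $X_0(m)$ into a modular diagonal quotient surface.

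Finally I would verify smoothness of the constructed $L$, and hence of its $W(m)$-orbit. By the structural fact, $L$ has complementary curve $E_k$ with $2\,\|\,k$ and $h:=\tfrac12 q_L^k$ representing $1$; one computes $q_L^{k/2}=h(2x,y)$ and $q_L^{2k}=h(x,2y)$, so by Lemma~\ref{picardmap} both $[q_L^{k/2}]$ and $[q_L^{2k}]$ lie in $\ker\bigl(e_2:\Pic(\O_{-4md})\to\Pic(\O_{-md})\bigr)$, a group of order $2$ by Lemma~\ref{kernel}. The crux, in contrast with the case $m\equiv 4\pmod 8$ of Proposition~\ref{mevennot8}, is that for $8\mid m$ these two classes \emph{coincide} rather than being distinct, so neither is the identity; consequently neither $q_L^{k/2}$ nor $q_L^{2k}$ represents $1$, and a parallel content analysis rules out representation of $1$ by any remaining $q_L^{j}$. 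Thus no elliptic curve $F'$ satisfies $L.F'=1$, and $L$ is smooth. This coincidence of the two kernel classes, which I expect to extract from the equivariance of Proposition~\ref{equivariance}, is the analytic heart of the proposition and the precise reason smooth merely ample bundles exist when $8\mid m$ but not when $4\,\|\,m$.
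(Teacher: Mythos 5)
Your reduction steps are sound and match the paper's: the $2$-adic analysis showing that a smooth merely ample $L$ has complementary curve $E_k$ with $(k,k')=2$ and $\tfrac12 q_L^k$ principal of discriminant $-md$, the identification of $[q_L]$ with an element of the order-$2$ kernel of $e_2:\Pic(\O_{-4md})\to\Pic(\O_{-md})$, and the exhaustion via injectivity of $\Psi_{m,d}$ on primitive classes are all exactly how the paper argues. But the two steps you defer are precisely the content of the proposition, and as sketched neither goes through. First, the construction. You invoke the converse of Theorem \ref{precisereider} but never choose $E$, $F$, $\phi$, and your fallback (``verify via the invariant $m=\tfrac12\det\NS$'') cannot work as stated: a quotient $E\times F/\Gamma_\phi$ is in general \emph{not} a product of elliptic curves (that is the whole point of the modular diagonal quotient remark after Theorem \ref{precisereider}), so no numerical invariant of $\NS$ will certify $E\times F/\Gamma_\phi\cong A_m$; you must exhibit the splitting. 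The paper sidesteps this by working inside the given $A_m$: it takes $E_2\equiv 2h+X_\l+(m/2)v$ and the explicit curve $F\equiv (m/2)h+(m/4+1)X_\l+2(m/4+1)^2v$, computes $(E_2.F)=4$ and checks the four intersection points are exactly the common $2$-torsion, so $E_2[2]=F[2]$ and $A_m$ itself is the quotient of $E_2\times F$ by the graph of the identity on $2$-torsion; then Theorem \ref{precisereider} hands you $L_2$ on $A_m$ directly. This is where $8\mid m$ enters concretely, and without some such explicit choice your proof has no starting object.

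Second, the smoothness verification is a non sequitur. You are right that $q_L^{k/2}=h(2x,y)$ and $q_L^{2k}=h(x,2y)$ both land in $\ker(e_2)$, a group of order $2$, and it is even true (a posteriori) that the two classes coincide when $8\mid m$. But coincidence does not imply ``neither is the identity'': two elements of a group of order $2$ that coincide could both be trivial, and if both were trivial then $q_L$ would represent $1$ and $L$ would be non-smooth --- which is exactly the possibility you are trying to exclude, so your argument is circular. Worse, the coincidence cannot be ``extracted from the equivariance of Proposition \ref{equivariance}'': in the case $m\equiv 4\pmod 8$ the distinctness $[q_L]\neq[q_L^4]$ comes from the Atkin--Lehner relation for the divisor $4$, which satisfies $(4,m/4)=1$ there; when $8\mid m$ we have $(2k,m/2k)>1$, the relevant operator $w_{2k}$ simply does not exist, and the formal machinery is silent on which kernel element you get. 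The paper instead proves smoothness of $L_2$ by elementary geometry: if $L_2$ were non-smooth, Proposition \ref{crit} gives $L_2\equiv\tilde h+d\tilde v$ for some decomposition; then $(\tilde h+d\tilde v).E_2=2$ with $d\geq 5$ forces $\tilde v.E_2=0$, hence $\tilde v=E_2$ by Lemma \ref{disjointEC}, making $E_2$ a direct factor of $A_m$ --- impossible by Proposition \ref{redpp} since $(2,m/2)=2$. The coincidence of the two kernel classes is then a \emph{consequence} of smoothness, not a route to it; you should replace your final step with this direct-factor argument (and, as in the paper, use Lemma \ref{unique} plus the pairwise non-isomorphy of the $E_k$ for the inequivalence of the $2^{\omega(m)}$ classes, which is cleaner than tracking translates by $[f_k]$ through the different class groups that arise when $(k,d)>1$).
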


\begin{proof}
First we construct, for each $k | m$ such that $(k,k') = 2$, a smooth and merely ample line bundle $L_k$ such that $L_k.F = 2$ for some elliptic curve $F$ isomorphic to $E_k$.  Note that there are $2^{\omega(m)}$ such divisors $k$ and that the $L_k$ are not $\Aut(A_m)$-equivalent to one another by Lemma \ref{unique}.   It suffices to construct $L_k$ when $k = 2$, because we can always relabel the elliptic curves on $A_m$.  

To construct $L_2$, we consider the following elliptic curves on $A_m = E \times E'$:
$$ E_2 \equiv 2h + X_\l + (m/2)v$$
$$F \equiv (m/2)h + (m/4 + 1) X_\l + 2\left( m/4 + 1\right)^2 v.$$
$E_2$ is the image of the map $E \to A_m$ given by $P \mapsto (2P, \l(P))$, and $F$ is the image of the map $P \mapsto (\frac{m}{2} P, \left(\frac{m}{4} + 1\right)\l(P)).$
These two elliptic curves intersect in $(E_2. F) = 4$ points.  In fact, the four points are given by $(0,0)$, $(0, \l(R))$, $(S,0)$ and $(S, \l(R))$, where $S$ generates the order 2 subgroup $H_2$ of $\ker \l$ and $S$ is any other order 2 point on $E$.  We therefore have $E_2[2] = F[2]$.  Now consider the subtraction map
$$\mu: B := E_2 \times F \to E \times E' = A_m$$
$$(P,Q) \mapsto P - Q.$$
By Theorem \ref{precisereider}, there is a merely ample $L_2 \in \NS(A_m)$ of degree $d$ such that $\mu^*L = 2dh_B + 2v_B$.  

On the other hand, $L_2$ is smooth.  For if otherwise, then $L_2 = \tilde h + d\tilde v$ with respect to some product decomposition of $A_m$, and as $(\tilde h + d\tilde v).E_2 = 2$, we must have $\tilde v.E_2 = 0$.  This forces $\tilde v = E_2$, but $E_2$ is not a direct factor of $A_m$ as $(2,m/2) > 1$, so we have a contradiction. 

It remains to show that any smooth and merely ample $M$ on $A_m$ of degree $d$ is $\Aut(A_m)$-equivalent to one of the $L_k$.  We have $M.F = 2$ for some elliptic curve $F$ isomorphic to $E_k$ for some $k | m$.  As $q_M^k$ represents 2, we must have $(k,k') = 2$ and we may, as usual, assume $k = 2$.  We now set $L = L_2$ and show that $M$ and $L$ are $\Aut(A_m)$-equivalent.  If we write $q_M^2 = 2h(x,y)$ for a quadratic form $h$ which represents 1, then $h(2x,y) = q_M$.  In particular, $q_M$ is primitive.  By Lemma \ref{picardmap}, $[q_M]$ is in the kernel of 
$$e_2 : \Pic(\O_{-4md}) \to \Pic(\O_{-md}),$$
which has size 2.  But $[q_M] = \Psi_{m,d}(M)$ cannot be trivial because the trivial class corresponds to the non-smooth polarization $dh + v$.  So $[q_M]$ is the generator of $\ker(e_2)$.  The same argument applies to $L_2$, so $\Psi_{m,d}(L_2) = \Psi_{m,d}(M)$.  Since $q_M$ and $q_{L_2}$ are primitive, this implies that $L_2$ and $M$ are $\Aut(A_m)$-equivalent.  
\end{proof}

Propositions \ref{oddm}, \ref{mevennot8}, and \ref{divby8} and their proofs suggest that if $L$ is merely ample, then $\Psi_{m,d}(L)$ is 2-torsion in its corresponding class group.  We prove this next in certain cases.  The proof is essentially the translation of the proofs above from the language of algebraic geometry to the language of quadratic forms via the correspondence of Theorem \ref{bijnoncm}.

% SHOWING THAT NON-V.A. CLASSES ARE 2-TORSION
\begin{proposition}\label{merely}
Suppose $d = p$ or $d = 2p$ for an odd prime $p$ and let $L \in \NS(A_m)$ be merely ample of degree $d \geq 5$.  Assume that $m$ is odd if $d$ is even.  Then $\Psi_{m,d}(L)$ is $2$-torsion in its class group.  
\end{proposition}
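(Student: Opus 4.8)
The plan is to translate the geometric arguments of Propositions~\ref{oddm}, \ref{mevennot8}, and \ref{divby8} into the language of quadratic forms, using the equivariance of $\Psi_{m,d}$ recorded in Proposition~\ref{equivariance} and Corollary~\ref{equiv}. Since $d=p$ or $d=2p$ with $p$ odd, an ample $L$ of degree $d$ is automatically of type $(1,d)$, so Reider's criterion (Theorem~\ref{reider}) applies: $L$ merely ample means there is an elliptic curve $F\subset A_m$ with $L.F\le 2$. Writing $F\cong E_k$ for a divisor $k\mid m$, this says exactly that $q_L^k$ represents $1$ or $2$. If $L.F=1$, then $L$ is non-smooth by Proposition~\ref{crit}, hence $\Aut(A_m)$-equivalent to a product polarization $L_k=dh_k+v_k$, and Proposition~\ref{2t} identifies $\Psi_{m,d}(L)=[f_k]$, which is $2$-torsion. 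So I may assume $L.F=2$, i.e. $q_L^k$ represents $2$ but not $1$.

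The engine of the argument is the elementary observation that every relevant discriminant $-4md/g^2$ (with $g$ odd) is $\equiv 0\pmod 4$: any positive-definite primitive form of such a discriminant representing $2$ is $\GL_2(\Z)$-equivalent to $[2,B,c]$ with $B^2\equiv 0\pmod 4$, hence to $[2,2t,c]$, and by Lemma~\ref{2torslemma} such a form is $2$-torsion. Thus the only real work is to transport the $2$-torsion of (the primitive part of) $q_L^k$ to the $2$-torsion of $\Psi_{m,d}(L)=[\tfrac{1}{(c,m)}q_L]$. For $d=p$ odd this transport is precisely the content of Propositions~\ref{oddm}, \ref{mevennot8}, and \ref{divby8}: for every $m$ the smooth merely ample classes are either the class of $[2,2,(1+md)/2]$ (when $m$ is odd or $8\mid m$) or do not exist (when $m\equiv 2,4\pmod 8$), and in each case the class is $2$-torsion, being $[2,\text{even},*]$ or the order-$2$ generator of $\ker e_2$. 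Together with the non-smooth case above, this settles $d=p$.

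For $d=2p$ with $m$ odd the discriminant $-4md$ has $md\equiv 2\pmod 4$, so content $2$ is impossible ($k,k'$ are odd and $L$ is indivisible) and all forms in play are primitive. A direct computation shows that when $q_L$ is primitive the content of $q_L^k$ equals $(k,d)$ and that $(k,k')=1$ whenever $q_L^k$ represents $2$. Hence if $q_L$ is primitive then $(k,d)\mid 2$ is odd, so $g=(k,d)=1$ and Proposition~\ref{equivariance} gives $[q_L]\cdot[f_k]=[q_L^k]$; as both $[f_k]$ and $[q_L^k]$ are $2$-torsion, so is $\Psi_{m,d}(L)=[q_L]$. The remaining possibility is $p\mid m$ with $p\mid c$, so that $q_L$ has odd content $p$. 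Here I would instead use the $p$-isogeny $f:A_m\to A_{m/p}$ from the Remark after Proposition~\ref{formsmap} to write $L=f^*M$ with $M\in\NS(A_{m/p})$ of degree $2$ and $\Psi_{m,d}(L)=[q_M]$. Pulling $L.F=2$ through $f$ shows $M.F'\in\{1,2\}$ for the image curve $F'$, so $M$ is either non-smooth (whence $[q_M]$ is $2$-torsion by Proposition~\ref{2t}) or $q_M$ is a primitive form of discriminant $\equiv 0\pmod 4$ representing $2$ (whence $[q_M]$ is $2$-torsion by the observation above).

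The main obstacle I anticipate is the bookkeeping around non-primitive $q_L$. Proposition~\ref{equivariance} is stated only for $q_L$ primitive or of content $2$, so the content-$p$ case genuinely requires the reduction to $A_{m/p}$ rather than a direct application; one must check that this reduction lands in degree $2$ (and not degree $1$, where Reider fails) and that the parity hypotheses force $(k,k')=1$, so that the reduction is by the honest minimal isogeny. Verifying that the content of $q_L^k$ is exactly $(k,d)$ — which is what makes ``$q_L^k$ represents $2$'' collapse to $(k,d)=1$ — is the computational heart that lets the clean case go through, and isolating the $d=p$, $m$ even subcase (where $(k,k')=2$ and $q_L^k$ has content $2$) is exactly the point where one must defer to the earlier propositions rather than argue uniformly.
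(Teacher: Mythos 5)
Your proposal is correct in substance but takes a genuinely different route from the paper's. The paper proves Proposition \ref{merely} by a self-contained case analysis entirely in the language of quadratic forms: it splits according to whether $q_L^k$ has content $2$ or is primitive, and in each case works directly in the relevant class group, using explicit Dirichlet composition (producing identities like $[f_k]\cdot[q_L]=[h_k]$ with $[h_k]\in\ker e_2$), the size-two kernels of $e_2$ and $e_p$ from Lemma \ref{kernel}, and Corollary \ref{equiv} applied in the favorable direction $\Psi_{m,d}(L)=e_g\bigl(\Psi_{m,d}(w_k(L))\bigr)\cdot[f_k]$; note that this last identity, with $g=(k,d)=p$, disposes of your content-$p$ subcase of $d=2p$ in one line, since $q_L^k$ is primitive there and a homomorphic image of a $2$-torsion class is $2$-torsion --- no descent to $A_{m/p}$ is needed. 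You instead (a) outsource the whole case $d=p$ to Propositions \ref{oddm}, \ref{mevennot8} and \ref{divby8}, which is legitimate and non-circular (they precede Proposition \ref{merely}), though what you need is extracted from their \emph{proofs} rather than their statements, and the relabeling ``$k=1$'' made there must be undone by exactly the $w_k$-transport you cite; and (b) handle the content-$p$ case of $d=2p$ by descending along the $p$-isogeny of the Remark after Proposition \ref{formsmap} to a degree-$2$ polarization $M$ on $A_{m/p}$, which also works. Your ``engine'' (a primitive positive definite form of discriminant $\equiv 0\pmod 4$ representing $2$ is equivalent to $[2,2t,c]$, hence $2$-torsion by Lemma \ref{2torslemma}) is the same uniqueness-of-the-class-representing-$2$ fact that the paper establishes via the unique prime above $2$, and both approaches buy the result from the same pillars (Reider, equivariance, $\ker e_2$ of size $2$); yours trades the paper's uniform computation for reuse of the geometric propositions, at the cost of leaning on their proofs. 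One local imprecision to repair: after descending, the pushforward gives $M.F'=2$ for $F'\cong E_j$ with $j\mid m/p$ arbitrary (in fact exactly $2$, since the degree of $F\to F'$ is $1$ or $p$), so it is $q_M^j$, not $q_M$, that represents $2$; you need one further application of the engine together with Proposition \ref{equivariance} --- harmless, since $(j,j')=1$ and $g=(j,2)=1$ because $m/p$ is odd --- to transport $2$-torsion from $[q_M^j]$ to $\Psi_{m,d}(L)=[q_M]$. This is a gloss rather than a gap, as it is the same transport you already perform in the primitive case.
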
   

\begin{proof}   
Write $L = ah + bX_\l + cv$.  If $L$ is not smooth, then $\Psi_{m,d}(L)$ is 2-torsion by Proposition \ref{2t}.  So assume $L$ is smooth and merely ample.  Then by Proposition \ref{crit} and Theorem \ref{reider}, there is an elliptic curve $E_k$ on $A_m$ such that $E_k. L = 2$.  Here, $E_k \cong E/H_k$ for some divisor $k$ of $m$ and $k' = k/m$.  This means that the quadratic form
$$q_L^k(x,y) = ak'x^2 - 2bmxy + cky^2$$
represents 2.

\begin{enumerate}
\item Case: $q_L^k$ is a multiple of 2.  
\begin{enumerate}
\item Subcase: $m$ is odd.  It follows that $a$ and $c$ are even, $b$ and $d$ are odd, and $(k,k') = 1$.  As $\frac{1}{2}q_L^k$ represents 1, its class in $\Cl(-md)$ is the trivial class.  By Lemma \ref{trans}, $[\frac{1}{2}q_{w_k(L)}]$ is also the trivial class.  By Corollary \ref{equiv} applied to $w_k(L)$, we have 
$$\Psi_{m,d}(L) = e_g(\Psi_{m,d}(w_k(L))) \cdot [g_k] = e_g(1) \cdot [g_k] = [g_k],$$
where $g = (k,d)$.  The class $[g_k]$ is 2-torsion by Lemma \ref{2torslemma}, which proves the proposition in this subcase.   

\item Subcase: $m$ is even and $(m,d) = 1$.  In particular, $d = ac - mb^2$ is prime.  In this case, both $a$ and $c$ are odd, so $k$ and $k'$ are even and $(k/2, k'/2) = 1$.  To ease notation, we write $D = -md$ and as usual write $\O_D$ for the quadratic order of discriminant $D$.  Consider the following quadratic forms of discriminant $4D$:   
$$f_1 = \left[k/2, \,0,\,  2k'd\right]$$
$$f_2 = [2k,\,0 \,,\,k'd/2].$$
At least one of these is primitive, call it $f_k$, and so it makes sense to consider the product $[f_k] \cdot [q_L]$ in the class group $\Cl(4D)$.  Dirichlet composition of forms gives $[f_k]\cdot [q_L] = [h_k]$, where $h_k$ is one of 
$$h_1 = \left[2ak',\, -2bm, \,kc/2\right],$$
$$h_2 = \left[ak'/2, \, -2bm, \, 2kc\right]$$

depending on whether we chose $f_1$ or $f_2$.  We claim that $[h_k]$ is 2-torsion in  $\Pic(\O_{4D})$.  In fact, $[h_k]$ is in the kernel of the natural map 
$$e_2: \Pic(\O_{4D}) \to \Pic(\O_{D}),$$ which establishes the claim as $\ker(e_2)$ has size 2 by Lemma \ref{kernel}.  To see that $[h_k] \in \ker(e_2)$, note that 
$$h_1(x,y) = \frac{1}{2}q_L^k(2x,y) \mbox{  and  } h_2(x,y) = \frac{1}{2}q_L^k(x,2y),$$
and use Lemma \ref{picardmap} and the fact that $\left[\frac{1}{2}q_L^k\right]$ is trivial in $\Pic(\O_D)$.   
Finally, $f_k$ is 2-torsion in $\Cl(4D)$ as well, so we see that $[q_L] = [f_k]\cdot [h_k]$ is 2-torsion in $\Cl(4D)$.       

\item Subcase: $m$ is even and $d$ is a prime $p$ which divides $m$.  If $(c,p) = 1$, then $q_L$ is primitive and we can argue as in the previous case.  So suppose $p$ divides $c$, hence $p$ divides $k$ as well.  The quadratic form $q_L$ has content $p$ and we let $q$ be the primitive form $\frac{1}{p}q_L$ of discriminent $-4m/p$.  As before, we have 
\begin{equation}\label{eq1}[k/2p, \, 0\,, 2k'] \cdot [q] = [h_k],\end{equation}
in the class group $\Cl(-4m/p)$, where $h_k$ is either 
$$\left[2ak'p^2, \, -2bm/p, \, kc/2 \right] \mbox{  or  } \left[ak'/2p^2, \, -2bm/p, \, 2kc \right],$$ whichever one is primitive.  Assume for simplicity that the former is primitive (a similar argument holds if only the other is primitive).  If we set $z(x,y) = \frac{1}{2}q_L^k(x,2y)$, then $h_k(x,y) = z(x/p, y)$.  We have maps
$$e_2 : \Pic(\O_{-4mp}) \to \Pic(\O_{-mp})$$ 
$$e_p : \Pic(\O_{-4mp}) \to \Pic(\O_{-4m/p}),$$
and one checks as before that $e_2([z]) = \left[\frac{1}{2}q_L^k\right] = 1$ and $e_p([z]) = [h_k]$.  The first equation implies that $[z]$ is two-torsion and thus $[h_k]$ is 2-torsion by the second equation. Finally, we deduce from (\ref{eq1}) that $[q] = [\frac{1}{p}q_L] = \Psi_{m,d}(L)$ is 2-torsion in $\Cl(-4m/p)$.   
\end{enumerate}

\item Case: $q_L^k$ is primitive:  In this case $(k,k') = 1$.  If $md$ is a multiple of 4, then one quickly gets a contradiction to the fact that $q_L^k$ is primitive and represents 2.  So there are two subcases to consider:
\begin{enumerate}
\item Subcase: $md$ is even.  In this case, $q_L^k$ corresponds to the class of an ideal $\a$ in $\O_{-4md}$ of norm 2, by Lemma \ref{reps}.  But there is a single prime above the rational prime 2 in the maximal order containing $\O_D$ and as $\a$ must be a prime ideal, $\a$ is uniquely determined.  We conclude that $q_L^k$ is $\GL_2(\Z)$-equivalent to the 2-torsion class $[2,0,md/2]$.  Corollary \ref{equiv} and Lemma \ref{trans} then give 
$$e_g\left([2,0,md/2]\right) \cdot [f_k] = \Psi_{m,d}(L),$$
which shows that $\Psi_{m,d}(L)$ is 2-torsion.   
\item Subcase: $md$ is odd.  First note that $md \equiv 1$ (mod 4) in this case, for if $md \equiv 3$ (mod 4), then there are no primitive quadratic forms of discriminant $-4md$ which represent 2.  
%Indeed, the reduced form would have outer coefficient 2, which forces the middle coefficient to be 2 as well.  But then the form would need to be $[2,2, (1+md)/2]$, which is not primitive.  
Since $md \equiv 1$ (mod 4), the prime 2 ramifies in the corresponding quadratic field.  We conclude as before that $[q_L^k]$ is the 2-torsion class $[2,2, (1+md)/2]$ and 
$$e_g\left([2,2,(1+ md)/2]\right) \cdot [f_k] = \Psi_{m,d}(L),$$
showing $\Psi_{m,d}(L)$ is 2-torsion.  
\end{enumerate}  
\end{enumerate}  
\end{proof}

\begin{notation}
We write $H(D)$ for the number of isomorphism classes of primitive integral symmetric bilinear forms of rank 2 and determinant $D$.   
\end{notation}

Of course, primitive integral symmetric bilinear forms of determinant $D$ are in bijection with matrix-primitive quadratic forms of discriminant $-4D$.  If one wants a bijection with \textit{primitive} integral quadratic forms, then primitive quadratic forms of discriminant $d \equiv 1$ (mod 4) correspond to bilinear forms of discriminant $-D$.  If $D> 0$, then we can relate $H(D)$ to the classical class numbers $h(d)$ of positive definite quadratic forms of discriminant $d$ up to $\SL_2(\Z)$-equivalence as follows: 
$$H(D) = \frac{1}{2}\begin{cases}
h(-4D) + h_2(-4D) &  D \not \equiv 3 \mbox{ (mod 4)}\\
h(-4D) + h_2(-4D) + h(-D) + h_2(-D) & D \equiv 3 \mbox{ (mod 4)}
\end{cases}.$$   
Here, $h_2(d)$ is the number of 2-torsion classes.  

\begin{notation}
We write $H_2(D)$ for the number of primitive integral symmetric bilinear forms of rank 2 and determinant $D$ which correspond to $2$-torsion classes of quadratic forms.   
\end{notation}

\begin{theorem}\label{vacount}
Suppose $p \geq 5$ is prime and $(m,p) = 1$.  Then $N_\va(A_m,p) = H(mp) - H_2(mp)$.  
\end{theorem}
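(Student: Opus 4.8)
The plan is to count very ample polarizations of degree $p$ by subtracting the merely ample ones from the total count $N(A_m,p)$, using the correspondence $\Psi_{m,p}$ of Proposition \ref{formsmap}. Since $(m,p) = 1$ and $p$ is prime (hence squarefree), Proposition \ref{formsmap} gives a bijection $\Psi_{m,p}: \Aut(A_m)\backslash \NS(A_m)^\amp_p \to \GL_2(\Z)\backslash V^\mprim_{-4mp}$, so counting polarizations of degree $p$ amounts to counting matrix-primitive quadratic forms of discriminant $-4mp$ up to $\GL_2(\Z)$-equivalence. By the discussion preceding Theorem \ref{vacount}, this latter count is exactly $H(mp)$. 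The key claim to establish is that under this bijection, the merely ample classes correspond precisely to the $2$-torsion forms, whose number is $H_2(mp)$; granting this, we get $N_\va(A_m,p) = H(mp) - H_2(mp)$ immediately.

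The main content is therefore to show that a polarization $L$ of degree $p$ is merely ample if and only if $\Psi_{m,p}(L)$ is a $2$-torsion class. One direction is supplied by Proposition \ref{merely}: if $L$ is merely ample, then $\Psi_{m,p}(L)$ is $2$-torsion (here $d = p$ is an odd prime, and the hypothesis $(m,p)=1$ places us in the relevant subcases of that proposition, with the proviso on parity handled since $p$ is odd). For the converse, I would argue that every $2$-torsion class is realized by a merely ample $L$, and moreover that the fibers of $\Psi_{m,p}$ over $2$-torsion classes consist entirely of merely ample polarizations. The natural approach is to invoke Reider's criterion (Theorem \ref{reider}): $L$ of degree $p \geq 5$ is merely ample exactly when some elliptic curve $E_k \subset A_m$ satisfies $E_k.L \leq 2$, equivalently when the form $q_L^k = [ak', -2bm, ck]$ represents $1$ or $2$ for some $k \mid m$ with the appropriate coprimality. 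Since $(m,p) = 1$, a $2$-torsion class of discriminant $-4mp$ corresponds (by Lemma \ref{2torslemma}) to a matrix-primitive form $[a, ab, c]$; I would check that such a form, or a suitable $\Gamma_0(m)$-translate landing in $V_{m,p}$, represents $1$ or $2$ and hence gives a merely ample bundle by Reider. The product polarizations $dh+v$ (the non-smooth ones, Proposition \ref{2t}) account for the forms representing $1$, and the smooth-but-merely-ample bundles constructed in Proposition \ref{oddm} account for the forms representing $2$; together these exhaust the $2$-torsion classes.

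The step I expect to be the main obstacle is verifying the converse direction cleanly—namely, that no very ample $L$ maps to a $2$-torsion class, and conversely that every $2$-torsion class is hit by a merely ample one, with the correct multiplicities so that the subtraction $H(mp) - H_2(mp)$ is exact rather than merely an inequality. The subtlety is that $\Psi_{m,p}$ need not be injective in general, so one must confirm that the fiber-size formula $|\Gamma_0(m)\backslash\Gamma_0(m/g)/\Aut(q)|$ behaves uniformly across very ample and merely ample classes; since $(m,p)=1$ forces $g = (k,p) \in \{1,p\}$ and the coprimality $(m,p)=1$ streamlines the fiber analysis, I anticipate the bijection is in fact clean here, but this is precisely where the coprimality hypothesis must be used carefully. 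The remaining verifications—that the representability of $1$ or $2$ by $q_L^k$ translates correctly through the Atkin-Lehner equivariance of Corollary \ref{equiv}, and that $H_2(mp)$ genuinely counts the $2$-torsion forms via the relation preceding the theorem—should then follow by the bookkeeping already set up in Section \ref{param}.
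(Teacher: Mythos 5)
Your overall skeleton is the same as the paper's: since $(m,p)=1$ and $p$ is squarefree, the last clause of Proposition \ref{formsmap} makes $\Psi_{m,p}$ a genuine bijection onto $\GL_2(\Z)\backslash V^{\mprim}_{-4mp}$, whose size is $H(mp)$, so the theorem reduces to showing that exactly $H_2(mp)$ classes are merely ample. (Your worry about fiber sizes is vacuous here: $(m,p)=1$ forces $g=(k,d)=1$ for every $k\mid m$, so there is no quotient by $\Gamma_0(m)\backslash\Gamma_0(m/g)/\Aut(q)$ to control; also note $(k,p)=1$ automatically, so the dichotomy $g\in\{1,p\}$ you mention does not arise.) The direction ``merely ample $\Rightarrow$ $2$-torsion'' via Proposition \ref{merely} is fine. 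But your plan for the converse contains a concrete error: it is false that a $2$-torsion form, or any $\Gamma_0(m)$-translate of it (which is $\GL_2(\Z)$-equivalent, hence represents the same integers), must represent $1$ or $2$. Take $m=3$, $p=7$: the class $[5,4,5]$ of discriminant $-84$ is $2$-torsion but has minimum $5$. The corresponding $L$ is nevertheless merely ample, but this is witnessed only by the Atkin--Lehner translate: one has $[q_L^3]=[q_L]\cdot[f_3]=[5,4,5]\cdot[3,0,7]=[2,2,11]$, which represents $2$, so $L.E_3=2$. So Reider's criterion must be checked across the whole orbit $\{[q_L]\cdot[f_k]\}_{k\mid m}$ via Proposition \ref{equivariance} and Corollary \ref{equiv}; you mention this equivariance only as a ``remaining verification,'' when in fact it is the crux, and checking representability class-by-class as you propose would fail.

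The second gap is that your exhaustion claim is asserted rather than proved, and it is sensitive to $m \bmod 8$ in a way your sketch misses: you invoke only Proposition \ref{oddm}, which covers odd $m$. For $m$ even but not divisible by $8$, Proposition \ref{mevennot8} shows there are \emph{no} smooth merely ample bundles, and correspondingly the $2$-torsion count drops (by Proposition \ref{2torsion}, $\#\Cl(-4mp)[2]=2^{\omega(m)}$, matched exactly by the $2^{\omega(m)}$ non-smooth classes of Corollary \ref{nonsm} and Proposition \ref{2t}); for $8\mid m$ one needs the separate construction of Proposition \ref{divby8}. The paper's proof is precisely this bookkeeping: the merely ample count is $2^{\omega(m)}$ plus the smooth-merely-ample count from Propositions \ref{oddm}, \ref{mevennot8}, \ref{divby8} according to the residue of $m$ modulo $8$, and in every case this agrees with $H_2(mp)$ computed from Proposition \ref{2torsion} (with the discriminant $-mp$ contribution appearing exactly when $mp\equiv 3 \pmod 4$). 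Your proposal identifies the right reduction and the right forward implication, but without the case analysis on $m \bmod 8$ and the composition identity $[q_L^k]=[q_L]\cdot[f_k]$ carried out in earnest, the equality $\#\{\text{merely ample}\}=H_2(mp)$ --- and hence the exactness of the subtraction --- is not established.
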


\begin{proof}
This follows from a simple computation using Propositions \ref{oddm}, \ref{mevennot8}, \ref{divby8}, Corollary \ref{nonsm}, and Proposition \ref{2torslemma}.
\end{proof}

\begin{proof}[Proof of Theorem $\ref{main}$]
If $d$ is not prime or twice a prime, then we can write $d = pq$ with $p$ and $q$ both larger than 2.  Then $L = ph + qv$ is very ample, being the product of pull backs of very ample line bundles on $E$ and $E'$.  So $N_\va(A_m,d) > 0$ if $d$ is not a prime or twice an odd prime.  Theorem \ref{main} now follows from Theorems \ref{mainp} and \ref{main2p}, which we prove next.
\end{proof} 

% MAIN THEOREM FOR d = p
\begin{theorem}\label{mainp}
If $d= p \geq 5$ is prime, then $N_\va(A_m,d) = 0$ if and only if $(m,d) = 1$ and $\Cl(-4md)$ is $2$-torsion.
\end{theorem}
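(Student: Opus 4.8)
The plan is to transport the problem to quadratic forms via the bijection $\Psi_{m,p}$ of Proposition \ref{formsmap} and to read off very ampleness from Reider's theorem. Since $p$ is prime, every ample $L$ of degree $p$ is of type $(1,p)$, so by Theorem \ref{reider} the bundle $L$ is very ample if and only if no elliptic curve $E \subset A_m$ satisfies $E.L \le 2$. The elliptic curves on $A_m$ are the $E_k$ with $k \mid m$ (Proposition \ref{redpp}), and the intersection numbers $E.L$ for $E \cong E_k$ are exactly the primitively-represented values of the form $q_L^k = [ak', -2bm, ck]$, where $L = ah + bX_\l + cv$ and $k' = m/k$. So I would first record the reformulation: $L$ is very ample if and only if none of the forms $q_L^k$ ($k \mid m$) represents $1$ or $2$ (and a representation of $1$ or $2$ is automatically primitive).

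The argument then splits according to whether $\Cl(-4mp)$ is $2$-torsion. Suppose first that it is not. A non-$2$-torsion primitive form $q$ of discriminant $-4mp$ is matrix-primitive, hence lies in the $g=1$ sheet $\GL_2(\Z)\backslash V^\mprim_{-4mp}$ of the image of $\Psi_{m,p}$ described in Proposition \ref{formsmap}, regardless of whether $(m,p)=1$. Its preimage $L$ is ample of degree $p$ with $\Psi_{m,p}(L) = [q]$ not $2$-torsion, so by the contrapositive of Proposition \ref{merely} the bundle $L$ is very ample. Hence $N_\va(A_m,p) > 0$ whenever $\Cl(-4mp)$ is not $2$-torsion; in particular, if $N_\va(A_m,p) = 0$ then $\Cl(-4mp)$ is $2$-torsion.

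It remains to treat the case where $\Cl(-4mp)$ is $2$-torsion, i.e.\ $mp$ is idoneal. If $(m,p)=1$, I would invoke Theorem \ref{vacount}: $N_\va(A_m,p) = H(mp) - H_2(mp)$, so it suffices to check that every matrix-primitive form counted by $H(mp)$ is $2$-torsion. These forms have discriminant $-4mp$, together with those of discriminant $-mp$ when $mp \equiv 3 \pmod 4$; the former are $2$-torsion by hypothesis, and the latter are $2$-torsion because $e_2 : \Cl(-4mp) \to \Cl(-mp)$ is surjective. Thus $H(mp) = H_2(mp)$ and $N_\va(A_m,p) = 0$, matching the theorem. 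If instead $p \mid m$, I must show $N_\va(A_m,p) > 0$ unconditionally. Since $mp$ is idoneal, the finiteness of the idoneal numbers bounds $m$ (by the largest idoneal number divided by $p$), so only finitely many such $m$ occur, and for each I would construct a very ample $L$ of degree $p$ explicitly.

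This explicit construction, and the verification that it always succeeds, is the step I expect to be the main obstacle. Here I would take $L = ah + bX_\l + cv$ with $ac - mb^2 = p$ and use the two completions of the square
\[ q_L^k(x,y) = ak'\Bigl(x - \tfrac{bm}{ak'}y\Bigr)^2 + \tfrac{pk}{a}\,y^2 = ck\Bigl(y - \tfrac{bm}{ck}x\Bigr)^2 + \tfrac{pk'}{c}\,x^2, \]
valid since $kk' = m$ and $ac - mb^2 = p$. Every value of $q_L^k$ with $y = 0$ is then at least $a$, every value with $x = 0$ is at least $c$, and every value with $xy \ne 0$ is at least $\max(\tfrac{pk}{a}, \tfrac{pk'}{c}) \ge p\sqrt{m/(ac)}$, the last bound holding for all $k$ since the maximum of an increasing and a decreasing function dominates their geometric mean. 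Hence if $a, c \ge 3$ and $ac < p^2 m/4$ then no $q_L^k$ represents $1$ or $2$ and $L$ is very ample. It therefore suffices to pick a small $b$ and a factorization $ac = p + mb^2$ with both factors at least $3$; the real content is that, for each of the finitely many idoneal $mp$ with $p \mid m$, some small $b$ makes $p + mb^2$ factor this way (equivalently, $p + mb^2$ is neither prime, nor twice a prime, nor $\le 4$). I expect this to require a short case check, supplemented where convenient by content arguments as in the proof of Proposition \ref{divby8} and by the Atkin--Lehner relabeling of Corollary \ref{equiv} to reduce the number of forms $q_L^k$ that must be examined.
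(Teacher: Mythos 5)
Your reformulation of very ampleness (no $q_L^k$ represents $1$ or $2$), your ``only if'' direction via the contrapositive of Proposition \ref{merely}, and your treatment of the case $(m,p)=1$ with $\Cl(-4mp)$ $2$-torsion via Theorem \ref{vacount} (including the observation that $\Cl(-mp)$ is a quotient of $\Cl(-4mp)$ when $mp\equiv 3 \pmod 4$) are all correct and match the paper's own reductions. The genuine gap is in the remaining case $p\mid m$ with $\Cl(-4mp)$ $2$-torsion: you reduce it to a case check over ``the finitely many idoneal $mp$,'' but that finite set is not effectively known. Weinberger's theorem identifies the idoneal numbers with the explicit list $S$ only under $\GRH$; unconditionally there may be up to two further idoneal numbers of unknown and not effectively bounded size. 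Since Theorem \ref{mainp} is unconditional, the enumeration you propose cannot be carried out, and your proof would fail precisely on the hypothetical extra idoneal values. The paper sidesteps this entirely: for $p\mid m$ it runs a counting argument valid for \emph{every} such $m$, comparing the number of merely ample classes ($2^{\omega(m)}$ or $2^{\omega(m)+1}$, by Propositions \ref{mevennot8}, \ref{divby8} and Corollary \ref{nonsm}) with the sizes $\#\Gamma_0(m)\backslash\Gamma_0(m/p)/\Aut(q)\geq p/2>2$ of the fibers of $\Psi_{m,p}$ over the $2$-torsion of $\Cl(-4m/p)$, in cases according to $m$ modulo $8$; no idoneal input appears anywhere in its proof.

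The good news is that your own construction closes the gap uniformly, making the finiteness appeal unnecessary: since $p\mid m$, the integer $p+mb^2$ is \emph{automatically} divisible by $p$ for every $b$. Take $b=2$, $a=p$, $c=1+4m/p$. Then $ac-mb^2=p$, both $a=p\geq 5$ and $c\geq 5$, and $ac=p+4m\leq 5m<p^2m/4$ because $p\leq m$ and $p\geq 5$; hence $p\sqrt{m/(ac)}\geq p/\sqrt{5}>2$, and by your completed-square inequalities no $q_L^k$ represents $1$ or $2$, so $L$ is very ample by Theorem \ref{reider} (degree $p$ prime forces type $(1,p)$). This shows $N_\va(A_m,p)>0$ for \emph{all} $m$ divisible by $p$ — the $2$-torsion hypothesis is not even needed — completing the theorem. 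With this one-line patch, your route for the $p\mid m$ case is genuinely different from the paper's and arguably cleaner: an explicit polarization verified by an elementary inequality, rather than class-number and coset-fiber estimates; what it gives up is the finer information the paper's counting extracts along the way (e.g.\ the exact count $N_\va(A_m,d)=\frac{1}{2}[h(-4md)-h_2(-4md)]$ in favorable cases).
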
  

% MAIN THEOREM FOR d = 2p
\begin{theorem}\label{main2p}\normalfont
If $p \geq 3$ is prime, then $N_\va(A_m,2p) = 0$ if and only if
\begin{enumerate}
\item $(m,p) = 1$.
\item $\Cl(-4md)$ is $2$-torsion.  
\item If we factor $m = \prod_p p^{a_p}$, then $a_2$ is either $0$, $2$, or $3$.  
\end{enumerate}
\end{theorem}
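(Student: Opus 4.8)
Since $d = 2p \ge 6$ is squarefree, every ample polarization of degree $2p$ has type $(1,2p)$, so Reider's theorem (Theorem \ref{reider}) applies: $L$ is merely ample exactly when $E_k.L \in \{1,2\}$ for some $k \mid m$, i.e.\ when one of the forms $q_L^k = [ak',-2bm,ck]$ represents $1$ or $2$. I would first peel off the non-smooth classes, where some $q_L^k$ represents $1$; by Corollary \ref{nonsm} and Proposition \ref{2t} these are exactly the $2^{\omega(m)}$ product polarizations, carried by $\Psi_{m,2p}$ to the $2$-torsion classes $[f_k]$. When $m$ is even and $(m,p)=1$ one has $(m,2p)=2$, so $\Psi_{m,2p}$ lands in $\Cl(-8mp)\amalg\Cl(-2mp)$, the two pieces linked by the surjection $e_2$; the real work is to decide which \emph{smooth} degree-$2p$ classes are merely ample.

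To prove necessity of (1) and (2), I would exhibit a very ample polarization whenever one fails. I first dispose of the case where (3) fails (handled in the next paragraph), so I may assume $a_2 \in \{0,2,3\}$. In this range the even-$m$ analogue of Proposition \ref{merely} holds — the same Dirichlet-composition argument shows a merely ample $L$ has $\Psi_{m,2p}(L)$ two-torsion, since a form of even discriminant representing $2$ is forced into the unique $2$-torsion class of norm-$2$ ideals — and via the surjection $e_2$ this controls $\Cl(-2mp)$ as well. Consequently, if $\Cl(-8mp)$ is not $2$-torsion then a non-$2$-torsion class is very ample, giving necessity of (2). If $p \mid m$, I would instead build a very ample class directly from the isogeny $A_m \to A_{m/p}$ together with Lemma \ref{vapullback}, as in the proof of Theorem \ref{mainp}, giving necessity of (1).

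The decisive point is condition (3), where the even degree departs from the odd case of Theorem \ref{mainp}. By the remark after Theorem \ref{precisereider}, for even $d$ the merely ample locus has a third component (dominated by $Y_1(2)\times Y_1(2)$), coming from configurations with $E_k.L=2$ but $\ker\mu$ of order $2$ rather than all of $E_k[2]$. Assuming (1) and (2), I would enumerate the smooth merely ample classes — constructed as in Propositions \ref{oddm} and \ref{divby8} but now including this third type — and compare their number with the total count of $2$-torsion classes in $\Cl(-8mp)\amalg\Cl(-2mp)$. Writing $-8mp=-4N$ with $N=2mp$, genus theory (Proposition \ref{2torsion}) gives $\#\Cl(-8mp)[2]=2^{\mu-1}$, with $\mu$ exceeding the number of odd prime divisors of $mp$ by $1$ when $a_2\in\{0,1\}$ and by $2$ when $a_2\ge 2$. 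The two counts are reconciled through the behaviour of the prime $2$ in $\O_{-8mp}$ and $\O_{-2mp}$, which is dictated by $a_2$: for $a_2\in\{0,2,3\}$ every $2$-torsion class is hit by a merely ample polarization, so $N_\va=0$; for $2\|m$ or $16\mid m$ a single $2$-torsion class survives as very ample, which I would produce explicitly as an $L$ whose every $q_L^k$ avoids $1$ and $2$, in the spirit of the family in the proof of Proposition \ref{divby8}.

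The main obstacle is exactly this threshold count at $a_2=1$ and $a_2=4$. The extra merely ample component present only in even degree makes the bookkeeping of smooth merely ample classes subtle, and one must show that their number meets the genus-theoretic total precisely when $a_2\in\{0,2,3\}$ and falls one short when $2\|m$ or $16\mid m$. Tracking the interplay of the two class groups $\Cl(-8mp)$ and $\Cl(-2mp)$ under $e_2$, and pinning the surplus class to an explicit very ample bundle at the boundary, is where the genuine difficulty lies.
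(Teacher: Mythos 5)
Your skeleton (Reider's criterion, peeling off the $2^{\omega(m)}$ non-smooth classes, pushing everything through $\Psi_{m,2p}$ into $\Cl(-8mp)\amalg\Cl(-2mp)$, and counting $2$-torsion) matches the paper's, but the proposal has a genuine gap precisely where the content of the theorem lies: the boundary cases $2||m$ and $16\mid m$ of condition (3). You flag these yourself as ``where the genuine difficulty lies'' and offer only the intention to produce an $L$ with every $q_L^k$ avoiding $1$ and $2$ ``in the spirit of Proposition \ref{divby8}'' --- but Proposition \ref{divby8} constructs \emph{merely ample} bundles, and no such family is exhibited. The paper does not locate the surviving very ample class inside the genus-theoretic bookkeeping at all; it transfers the problem down a $2$-isogeny. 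For $2||m$ it takes a smooth $M$ of degree $p$ on the odd-type surface $A_{m/2}$ with $M.F=2$ (from the proof of Proposition \ref{oddm}), normalizes $F$ to the vertical axis by an automorphism, and shows the pullback $g^*M$ is very ample because $(g^*M.v)=4$ while Lemma \ref{unique} pins down the unique elliptic curve of intersection $2$; for $16\mid m$ it pulls back the bundle of Proposition \ref{divby8} from $A_{m/2}$ along $\l_2\times\id$ and computes $f_*E_2 = 2E_4$ to the same effect. Without a construction of this kind, the ``if'' direction of condition (3) --- the very statement that distinguishes this theorem from Theorem \ref{mainp} --- is missing. Incidentally, your appeal to the third component $Y_1(2)\times Y_1(2)$ of the merely ample locus plays no role in the paper's argument, which instead reduces all even-degree questions to odd-degree questions on $A_{m/2}$ or $A_{2m}$.

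Two further steps fail as written. First, for necessity of (1) you propose pulling back along the $p$-isogeny $A_m\to A_{m/p}$: this would require a very ample bundle of \emph{degree $2$} on $A_{m/p}$, which cannot exist; the paper instead uses a $2$-isogeny ($A_m\to A_{2m}$ or $A_m \to A_{m/2}$) together with the degree-$p$ very ample bundle supplied by Theorem \ref{mainp}, with a separate remark for $p=3<5$, which you also do not treat. Second, in the case $a_2\in\{2,3\}$ your inference ``every $2$-torsion class is hit by a merely ample polarization, so $N_\va=0$'' is logically insufficient: the fibers of $\Psi_{m,2p}$ over $\Cl(-2mp)$ have size exactly $2$, so a $2$-torsion class could a priori carry one merely ample and one very ample polarization. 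The paper closes this by \emph{classifying} the merely ample $L$ in this range as pullbacks $f^*M$ of non-smooth $M$ of degree $p$ (using Proposition \ref{mevennot8}, whose hypothesis that $m/2$ be even and not divisible by $8$ is exactly what confines this argument to $a_2\in\{2,3\}$) and verifying that these number $2^{\omega(m)+1}$ over each component, hence exhaust every fiber over the $2$-torsion locus. Relatedly, your claimed ``even-$m$ analogue of Proposition \ref{merely}'' rests on the primitive-form argument that a form representing $2$ lies in the unique norm-$2$ ideal class, but when $4\mid md$ no primitive form of discriminant $-4md$ represents $2$ --- this is precisely why Proposition \ref{merely} assumes $m$ odd when $d$ is even, and why the paper replaces it by the pullback classification in the even case.
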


\begin{proof}[Proof of Theorem $\ref{mainp}$]
If $(m,p) = 1$, the theorem follows from Theorem \ref{vacount}.  Note that this is true even when $mp \equiv 3$ (mod 4) because if $\Cl(-4mp)$ is 2-torsion, then $\Cl(-mp)$, being a quotient of $\Cl(-4mp)$, is 2-torsion as well.  

So assume now that $d = p$ divides $m$. We will show that there is a very ample line bundle of degree $p$ on $A_m$.  We proceed case-by-case and use the letters $d$ and $p$ interchangeably.    
\begin{enumerate}
%CASE 1
\item Case: $m$ is odd. 
\begin{enumerate}
\item Subcase: $md \equiv 1$ (mod 4).  In this case $\Psi_{m,d}$ maps to $\Cl(-4mp) \coprod \Cl(-4m/p)$.  It follows from the analysis in Case (2b) of Proposition \ref{merely} that any smooth but merely ample $L$ which maps to $\Cl(-4m/p)$ does not map to a class of the form $[a,0,b]$.  So only non-smooth $L$ map to such classes.  In fact, exactly half of the $2^{\omega(m)}$ non-smooth $L$ map to $\Cl(-4m/p)$: those corresponding to divisors $k$ divisible by $p$.  We have $h_2(-4m/p)= 2^{\omega(m/p)}$, so these non-smooth $L$ either map 2-to-1 or 1-to-1 to over the points of the form $[a,0,b] \in \Cl(-4m/p)$, depending on whether $p^2 | m$ or not.  On the other hand, the fibers of the surjective map $\Psi_{m,d}$ are of size $\#\Gamma_0(m)\backslash \Gamma_0(m/p)/\Aut(q)$.  The size of $\Gamma_0(m)\backslash\Gamma_0(m/p)$ is either $p$ or $p+1$, depending on whether $p^2 | m$ or not.  As $q$ has even discriminant, $\Aut(q)$ has either 4 or 8 elements, but it acts on the coset space through a quotient of size at most 2.\footnote{If $|\Aut(q)| = 8$, then $[q] = [x^2 + y^2]$ and there are 4 diagonal automorphisms.}  Since $p \geq 5$, we see that in all cases, the fibers of $\Psi_{m,d}$ above points of the form $[a,0,b] \in \Cl(-4m/p)$ are larger than 2, so there must be a very ample line bundle in each of those fibers.  So $N_\va(A_m,d) > 0$ in this subcase, as desired.   
\item Subcase: $md \equiv 3$ (mod 4).  This time $\Psi_{m,d}$ maps onto 
$$\Cl(-4mp) \coprod \Cl(-mp) \coprod \Cl(-4m/p)\coprod \Cl(-m/p).$$
However the $2^{\omega(m)}$ non-smooth $L$ map to the even discriminant groups and the rest of the merely ample $L$ map to the odd discriminant groups, by the analysis in Case (1a) of Proposition \ref{merely}.  But 
$$h_2(-4mp) + h_2(-4m/p) =  2^{\omega(m)} +  h_2(-4m/p) > 2^{\omega(m)},$$ so there must be at least one very ample $L$ of degree $d$, i.e. $N_\va(A_m,d)  > 0$.  
\end{enumerate}

%CASE 2
\item Case: $m$ even, not divisible by 8.  In this case, $\Psi_{m,d}$ maps onto 
$$\Cl(-4mp) \coprod \Cl(-4m/p).$$  We have $h_2(-4mp) = 2^{\omega(m) -1}$ and $h_2(-4mp)$ is either $2^{\omega(m) - 1}$ or $2^{\omega(m) - 2}$, depending on whether $p^2 | m$ or not.  But if $L\in \NS(A_m)$ has degree $p$ and is merely ample, then it is also not smooth, by Proposition \ref{mevennot8}.  So on the one hand, the merely ample $L$ map either 1-to-1 or 2-to-1 onto $\Cl(-4m/p)$, whereas the fibers of $\Psi_{m,d}$ above such points have size $\#\Gamma_0(m)\backslash \Gamma_0(m/p)/\Aut(q) > 2$.  There therefore must be some very ample $L$ of degree $p$ and $N_\va(A_m,d) > 0$.  

%CASE 3
\item  Case: $m \equiv 0$ (mod 8).  In this case, $\Psi_{m,d}$ maps onto
$$\Cl(-4mp) \coprod \Cl(-4m/p).$$  By Proposition \ref{divby8} and Corollary \ref{nonsm}, there are exactly $2^{\omega(m) + 1}$ merely ample classes of line bundles.  On the other hand, $\#\Cl(-4mp)[2] = 2^{\omega(m)}$ and $\#\Cl(-4m/p)[2]$ is either $2^{\omega(m)}$ or $2^{\omega(m) -1 }$, depending on whether $p^2 | m$ or not.  The fibers of $\Psi_{m,d}$ above $\Cl(-4mp)[2]$ are singletons and the fibers above $q \in \Cl(-4m/p)[2]$ are of size 
$$\Gamma_0(m) \backslash \Gamma_0(m/p) \Aut(q).$$
The coset space $\Gamma_0(m) \backslash \Gamma_0(m/p)$ has size $p$ or $p+ 1$ depending on whether $p^2 | m$, and $\Aut(q)$ acts through a quotient of size at most 2.  Thus the fibers of $\Psi_{m,d}$ above $\Cl(-4m/p)$ have size at least $p/2 > 2$.  This shows that there are more than $2^{\omega(m) + 1}$ degree $p$ polarizations on $A_m$, hence $N_\va(A_m,p) > 0$, as desired.  
\end{enumerate}
\end{proof}

%PROOF OF MAIN THEOREM: 2p CASE
\begin{proof}[Proof of Theorem $\ref{main2p}$] 
We again break into several cases.  
\begin{enumerate}
%CASE 1
\item Case: $m$ odd and $(p,m) =1$. In this case, $\Psi_{m,d}$ maps the isomorphism classes of degree $d$ polarizations $L \in \NS(A_m)$ bijectively onto $\Cl(-4md)$.  The merely ample $L$ map to $\Cl(-4md)[2]$, which has size $2^{\omega(m) + 1}$.  The non-smooth $L$ map to the $2^{\omega(m)}$ forms $[k,0,k'd]$.  The smooth merely ample line bundles must map to the remaining 2-torsion classes by Proposition \ref{merely}; these are of the form $[2k,0,k'p]$.  Conversely, if $[q_L] = [2k,0,k'p]$ for $k$ such that $(k,k') = 1$, then $L$ is smooth and merely ample.  Indeed, by Proposition \ref{equivariance}, $[q^k_L] = [q_L]\cdot [f_k] = [2,0,mp]$, which represents 2.  So $L.E_k = 2$ and $L$ isn't very ample.  Altogether, we have accounted for the $2^{\omega(m) + 1}$ 2-torsion classes in $\Cl(-4md)$, showing that 
$$N_\va(A_m,d) = \frac{1}{2}\left[h(-4md) - h_2(4md)\right].$$   

%CASE 2
\item Case: $m$ odd and $p$ divides $m$.  
Let $f: A_m \to A_{2m}$ be any degree 2 isogeny.  For example, let $A_{2m} = E \times E''$, where $E'' = E'/\langle P \rangle$ with $P \in E'$ any point of order 2 and set 
$$f = \id \times \pi: A_m = E \times E' \to E \times E'' = A_{2m}.$$  By Theorem \ref{mainp}, there exists a very ample line bundle $M$ of degree $p$ on $A_{2m}$.  Then $f^*M \in \NS(A_m)$ has degree $2p$ and is very ample by Lemma \ref{vapullback}.  When $p = 3$, there is no very ample $M$, for degree reasons.  However, it is still true that there exists an $M$ for which $M.F > 2$ for all elliptic curves $F \subset A_{2m}$.  This is enough for the proof of Lemma \ref{vapullback} to go through.\footnote{Here and in some other cases, we leave the details for the case $p = 3$ to the reader.}  So $N_\va(A_m, d) > 0$.              

%CASE 3
\item Case: $m \equiv 2$ (mod 4).  If $p | m$, then $N_\va(A_m,d) > 0$.  Indeed, we may choose an isogeny $f: A_m \to A_{2m}$ and by Theorem \ref{mainp} , there is a very ample line bundle $M \in \NS(A_{m/2})$ of degree $p$.  Then $f^*M \in \NS(A_m)$ is very ample of degree $2p = d$.     

Assume now that $(p,m) = 1$.   We will construct a very ample $L$ on $A_m$ of degree $2p$ using the 2-isogeny 
$$f : A_m = E \times E' \stackrel{\id \times \hat \mu_{m/2}}\longrightarrow E \times E_{m/2} = A_{m/2}.$$  Let $M \in \NS(A_{m/2})$ be smooth of degree $p$ and satisfying $M.F = 2$ for some elliptic curve $F \subset A_{m/2}$ which is abstractly isomorphic to $E_{m/2}$.  The existence of such an $M$ follows from the proof of Proposition \ref{oddm}.  By Proposition \ref{ECs}, we may choose $\alpha  \in \Aut(A_{m/2})$ such that $\alpha^*F = \{0\} \times E_{m/2}$.  Set $g = \alpha \circ f$, so that $g: A_m \to A_{m/2}$ is a 2-isogeny.  We have $g^*F = v$ and $g_*v = 2F$.  Thus,
$$(g^*M.v) = (M.g_*v) = 2(M.F) = 4.$$
On the other hand, $g^*M$ is smooth because $M$ is smooth.  So if $g^*M$ is not very ample, then there exists an elliptic curve $F' \subset A$ such that $(g^*M.F') = 2$ and so $(M.g_*F') = 2$.  Note that $g_*F'$ is an elliptic curve for otherwise it is twice an elliptic curve and this would contradict the smoothness of $M$.   By Lemma \ref{unique}, we must have $g_*F' = F$.  This shows that $F' = v$ and hence $(g^*M.v) = 2$, which contradicts our computation above.  So $g^*M$ is very ample and $N_\va(A_m,d) > 0$.       

%CASE 4
\item Case: $m \equiv 0$ (mod 4).
\begin{enumerate}
%CASE 4a
\item Subcase: $p$ divides $m$.  Consider a 2-isogeny $f: A_m \to A_{m/2}$.  By Theorem \ref{mainp}, there is a very ample $L \in \NS(A_{m/2})$ of degree $p$, so $f^*L$ is very ample of degree $2p$.  Hence $N_\va(A_m,d) > 0$.  
%CASE 4b
\item Subcase: $(p,m) = 1$ and $m \not \equiv 0$ (mod 16).  Then $\Psi_{m,d}$ maps onto 
$$\Cl(-8mp) \coprod \Cl(-2mp).$$
If $L \equiv ah + bX_\l + cv$ maps to $\Cl(-8mp)$, then $a$ is even and $L = f^*M$, where $f$ is the 2-isogeny 
$$A_m = E \times E' \to A_{m/2} = E \times E_{m/2}$$
and $M$ has degree $p$.  If $L$ is not very ample, then $M$ is not very ample and by Proposition \ref{mevennot8}, we see that $M$ is not even smooth.  Conversely, if $M \in \NS(A_{m/2})$ of degree $p$ is not smooth, then $M.F = 1$ for some elliptic curve $F \subset A_{m/2}$ and therefore $f^*M.f^*F = 2$.  So $f^*M$ has degree $2p$ and is not very ample.  There are $2^{\omega(m)}$ such $M$ up to $\Aut(A_{m/2})$-equivalence, and they give rise to $2^{\omega(m) + 1}$ $\Aut(A_m)$-equivalence classes of non-very-ample $L$ of degree $2p$, because $[\Gamma_0(m/2) : \Gamma_0(m)] = 2$ and $\Aut(q_M)$ acts trivially on the cosets.  

Similarly, if $\Psi_{m,d}(q_L) \in \Cl(-2mp)$, then $c$ is even and $L = g^*M$ where $g$ is the 2-isogeny
$$A_m  = E \times E' \to E_2\times E' = A_{m/2},$$
and $M$ has degree $p$.  There are again $2^{\omega(m) + 1}$ different $L$ which are not very ample.  On the other hand, $h_2(-8mp) = 2^{\omega(m) + 1}$ and $h_2(-2mp) = 2^{\omega(m)}$ with fibers of $\Psi_{m,d}$ above points in $\Cl(-2mp)$ having size exactly 2.  So $N_\va(A_m,d) = 0$ if and only if both $\Cl(-4md)$ and $\Cl(-md)$ are 2-torsion if and only if $\Cl(-4md)$ is 2-torsion.
%CASE 4c
\item Subcase: $(p,m) = 1$ and $m \equiv 0$ (mod 16).  We will construct a very ample divisor of degree $2p$ on $A_m$.  We consider the $2$-isogeny
$$f: A_m  = E \times E' \xrightarrow{\l_2 \times \id} E_2 \times E' = A_{m/2}.$$
We write $\l_0: E_2 \to E'$ for the unique map satisfying $\l = \l_0 \circ \l_2$, i.e. $\l_0$ is a minimal isogeny connecting $E_2$ and $E'$.  By Proposition \ref{divby8}, $A_{m/2}$ has a smooth line bundle $L$ of degree $p$ such that $L.E_4 = 2$, where $E_4 \subset A_{m/2}$ is the image of the map 
$$E_2 \to E_2 \times E'$$
$$R \mapsto (2R, \l_0(R)).$$
We refer to it as $E_4$ because it is abstractly isomorphic to the elliptic curve we've been calling $E_4$ on $A_{m}$.  With respect to the surface $A_{m/2}$, though, this is the elliptic curve we called $E_2$ in the proof of Proposition \ref{divby8}.  We think of $E_2 \subset A_m$ as the image of the map $E \to A_m : P \mapsto (2P, \l(P))$ as usual.  It is easy to check that $f(E_2) = E_4$.  As $E_2$ and $E_4$ are not abstractly isomorphic, this forces the induced map $f: E_2 \to E_4$ to have degree 2.  So $f^*E_4 = E_2$ and $f_*E_2 = 2E_4$, giving 
$$(f^*L.E_2) = (L.2E_4) = 4.$$
But if $F \subset A_m$ is an elliptic curve on $A_m$ such that $(f^*L.F) = 2$, then $(L.f_*F) = 2$ and $f_*F = E_4$ by Lemma \ref{unique}.  But this forces $F = E_2$, and hence a contradiction:
$$4 = (f^*L.E_2) = (f^*L.F) = 2.$$  We conclude that $f^*L$ is very ample of degree $2p$ and $N_\va(A_m,d) > 0$.      
\end{enumerate}
\end{enumerate}
\end{proof}
%
%Recall the set $S$ of known idoneal numbers from Theorem \ref{weinberger}:
%$$S = \{1,2,3,4,5, 6, 7, 8, 9, 10, 12, 13, 15, 16, 18, 21, 22, 24, 25, 28, 30, 33, 37, 40, 42, 45, 48, 57,$$ $$ 58, 60, 70, 72, 78, 85, 88, 93, 102, 105, 112, 120, 130, 133, 165, 168, 177, 190, 210, 232, 240, 253,$$ $$273, 280, 312, 330, 345, 357, 385, 408, 462, 520, 760, 840, 1320, 1365, 1848\}.$$  We also write $S^*$ for the set of idoneal numbers.  In particular, $S = S^*$ if GRH is true and unconditionally $S^*$ contains at most two more elements than $S$.  
% 
%\begin{corollary}
%Let $m \geq 1$ and suppose $p \geq 5$ is a prime $p$.  Then $N_\va(A_m,p) = 0$ if and only if 
%\begin{enumerate}
%\item $(m,p) =1$.
%\item $mp \in S^*$.
%\end{enumerate}  
%Similarly, $N_\va(A_m,2p) = 0$ if and only if 
%\begin{enumerate}
%\item $(m,p) = 1$.
%\item The power of $2$ exactly dividing $m$ is either $1$, $4$, or $8$.
%\item $2mp \in S^*$.  
%\end{enumerate}
%In particular, there are finitely many pairs $(m,d)$ with $d \geq 5$, such that $N_\va(A_m,d) = 0$.  
%\end{corollary}
%
%\begin{corollary}
%Assume GRH.  Then the surface $A_m$ embeds in $\P^4$ if and only if $m$ is not one of the following integers:
%$$1, 2, 3, 6, 8, 9, 12, 14, 17, 21, 24, 26, 33, 38, 42, 48, 56, 66, 69, 77, 104, 152, 168, 264, 273.$$
%Unconditionally, there are at most two more values of $m$ for which $A_m$ does not embed in $\P^4$.  
%\end{corollary}
%\begin{proof}
%This is the condition for $N_\va(A_m,5) \neq 0$, and it is well known that an abelian surface embeds in $\P^4$ if and only if it has a very ample line bundle of degree 5.  
%\end{proof}

\end{document}